\documentclass[11pt,letterpaper]{amsart}
\usepackage{cases}
\usepackage{mathrsfs}
\usepackage{color}
\usepackage{latexsym,amssymb}
\usepackage{a4wide}
\usepackage{amsthm}
\usepackage{amsmath}
\usepackage{graphicx}
\input xy
\xyoption{all}
\usepackage{verbatim}
\usepackage[lite,abbrev,msc-links]{amsrefs}
\usepackage{framed}
\usepackage{bm}

\numberwithin{equation}{section}
\begin{document}
	\theoremstyle{plain}
	\newtheorem{thm}{Theorem}[section]
	\newtheorem{lem}[thm]{Lemma}
	\newtheorem{cor}[thm]{Corollary}
	\newtheorem{cor*}[thm]{Corollary*}
	\newtheorem{prop}[thm]{Proposition}
	\newtheorem{prop*}[thm]{Proposition*}
	\newtheorem{conj}[thm]{Conjecture}
	\theoremstyle{definition}
	\newtheorem{construction}{Construction}
	\newtheorem{notations}[thm]{Notations}
	\newtheorem{question}[thm]{Question}
	\newtheorem{prob}[thm]{Problem}
	\newtheorem{rmk}[thm]{Remark}
	\newtheorem{remarks}[thm]{Remarks}
	\newtheorem{defn}[thm]{Definition}
	\newtheorem{claim}[thm]{Claim}
	\newtheorem{assumption}[thm]{Assumption}
	\newtheorem{assumptions}[thm]{Assumptions}
	\newtheorem{properties}[thm]{Properties}
	\newtheorem{exmp}[thm]{Example}
	\newtheorem{comments}[thm]{Comments}
	\newtheorem{blank}[thm]{}
	\newtheorem{observation}[thm]{Observation}
	\newtheorem{defn-thm}[thm]{Definition-Theorem}
	\newtheorem*{Setting}{Setting}

	\newcommand{\sA}{\mathcal{A}}	\newcommand{\sB}{\mathcal{B}}	\newcommand{\sC}{\mathcal{C}}	\newcommand{\sD}{\mathscr{D}}	\newcommand{\sE}{\mathcal{E}}	\newcommand{\sF}{\mathscr{F}}	\newcommand{\sG}{\mathscr{G}}	\newcommand{\sH}{\mathscr{H}}	\newcommand{\sI}{\mathscr{I}}	\newcommand{\sJ}{\mathscr{J}}	\newcommand{\sK}{\mathscr{K}}	\newcommand{\sL}{\mathscr{L}}	\newcommand{\sM}{\mathcal{M}}	\newcommand{\sN}{\mathscr{N}}	\newcommand{\sO}{\mathcal{O}}	\newcommand{\sP}{\mathscr{P}}
	\newcommand{\sQ}{\mathscr{Q}}	\newcommand{\sR}{\mathscr{R}}	\newcommand{\sS}{\mathcal{S}}	\newcommand{\sT}{\mathscr{T}}	\newcommand{\sU}{\mathscr{U}}	\newcommand{\sV}{\mathscr{V}}	\newcommand{\sW}{\mathscr{W}}	\newcommand{\sX}{\mathcal{X}}	\newcommand{\sY}{\mathcal{Y}}	\newcommand{\sZ}{\mathcal{Z}}	\newcommand{\bZ}{\mathbb{Z}}	\newcommand{\bN}{\mathbb{N}}	\newcommand{\bQ}{\mathbb{Q}}	\newcommand{\bC}{\mathbb{C}}
	\newcommand{\bR}{\mathbb{R}}	\newcommand{\bH}{\mathbb{H}}	\newcommand{\bD}{\mathbb{D}}	\newcommand{\bE}{\mathbb{E}}	\newcommand{\bP}{\mathbb{P}}
	\newcommand{\bV}{\mathbb{V}}	\newcommand{\cV}{\mathcal{V}}	\newcommand{\cF}{\mathcal{F}}	\newcommand{\bfM}{\mathbf{M}}	\newcommand{\bfN}{\mathbf{N}}	\newcommand{\bfX}{\mathbf{X}}	\newcommand{\bfY}{\mathbf{Y}}	\newcommand{\spec}{\textrm{Spec}}	\newcommand{\dbar}{\bar{\partial}}	\newcommand{\ddbar}{\partial\bar{\partial}}	\newcommand{\redref}{{\color{red}ref}}
	
	
	\title[Boundedness of families] {Geometric Shafarevich boundedness conjecture for families of polarized varieties}
	
	\author[Junchao Shentu]{Junchao Shentu}
	\email{stjc@ustc.edu.cn}
	\address{School of Mathematical Sciences,
		University of Science and Technology of China, Hefei, 230026, China}

	\begin{abstract}
		We establish the geometric Shafarevich boundedness conjecture for the moduli stack of stable minimal models, including in particular the moduli stack of KSB pairs.
	\end{abstract}
	
	\maketitle
	\section{Introduction}
	This paper addresses the higher-dimensional generalization of the geometric Shafarevich conjecture over the field of complex numbers, stating that given an integer $g>1$ and a quasi-projective smooth curve $S^o$, the set
	\begin{align}\label{align_Shafa_conj_set}
		\left\{\textrm{family } f:X^o\to S^o\textrm{ of smooth projective curves of genus }g\right\}/\simeq,
	\end{align}
	where $f\simeq g$ when $f,g$ are isomorphic over $S^o$, is a finite set. As a consequence the Hom-stack  
	\begin{align}\label{align_Homstack_Mg}
		\operatorname{Hom}\big((S,D),\,(\overline{\mathcal{M}}_g,\partial\mathcal{M}_g)\big)
	\end{align}  
	consists of finitely many points; here $S$ denotes the smooth compactification of $S^o$, and $D := S \setminus S^o$ is the divisor of boundary points. The conjecture was first established by Par\v{s}in \cite{Parsin1968} in the case $D = \emptyset$, and subsequently generalized by Arakelov \cite{Arakelov1971} to arbitrary $D$. Its arithmetic counterpart-Shafarevich’s conjecture for curves over number fields-was proved by Faltings \cite{Faltings19832}; combined with Par\v{s}in's technique, this result yields a proof of the Mordell conjecture. 
	
	In general, the geometric Shafarevich conjecture fails for families of higher-dimensional varieties due to the existence of non-rigid smooth families-i.e., smooth families that admit nontrivial deformations over the base. Faltings \cite{Faltings1983} constructed explicit examples of non-rigid families of principally polarized abelian varieties;  Liu-Todorov-Yau-Zuo \cite{LTYZ2005} produced non-rigid families of polarized Calabi-Yau manifolds. These counterexamples indicate that, in higher dimensions, the appropriate equivalence relation in (\ref{align_Shafa_conj_set}) is not mere isomorphism over the base, but rather deformation equivalence: smooth families are expected to form \emph{bounded} moduli. Equivalently, the Hom-stack (\ref{align_Homstack_Mg}) is anticipated to be of finite type when $\overline{\mathcal{M}}_g$ is replaced by the compactified moduli stack of higher-dimensional (polarized) varieties. Indeed, boundedness for families of canonically polarized manifolds was established by Bedulev-Viehweg \cite{Viehweg2000} and Kov\'acs-Lieblich \cite{Kovacs2011}; boundedness for polarized Calabi-Yau families  (in a weaker form) was proved by Liu-Todorov-Yau-Zuo \cite{LTYZ2005} and Viehweg-Zuo \cite{VZ2003_2}. Moreover, several sufficient criteria ensuring rigidity of such families have been identified (see \cite{VZ2003_2,Kovacs2005}). For a comprehensive survey of these developments, we refer the reader to \cite{Kovacs2009}.
	
	It is natural to ask whether the geometric Shafarevich boundedness conjecture extends to families of other varieties-a question intrinsically linked to the construction of compactifiable moduli spaces. Prominent examples of compact moduli spaces for algebraic varieties include the modular compactifications of the moduli spaces parametrizing polarized abelian varieties \cite{Alexeev2002}, plane curves \cite{Hacking2004}, canonically polarized manifolds of general type \cite{Kollar1988,Kollar2010,Kollar2023}, polarized Calabi-Yau manifolds \cite{KX2020}, and $K$-polystable Fano manifolds \cite{Xu2020}. Building on the fundamental works of Koll\'ar \cite{Kollar2010,Kollar2023}, Birkar \cite{Birkar2022} introduced the moduli space of stable minimal models to address the fundamental problem of constructing a proper moduli stack parametrizing birational equivalence classes of projective varieties of arbitrary Kodaira dimension-thereby generalizing the Deligne-Mumford's compact moduli of stable curves $\overline{\sM}_{g,n}$ and the moduli stack of KSB pairs \cite{Kollar1988,Kollar2010,Kollar2023}.
	
	Let $\mathcal{M}_{\mathrm{slc}}(d, \Phi_c, \Gamma, \sigma)$ denote the Deligne-Mumford stack parametrizing $(d, \Phi_c, \Gamma, \sigma)$-stable minimal models (c.f. \cite{Birkar2022} or \S \ref{section_moduli}). It is complete and admits a projective coarse moduli space. A central feature of the geometric Shafarevich conjecture for family of curves is the admissibility condition: the family $f \colon X^o \to S^o$ must be everywhere smooth-i.e., it admits no degenerate fibers over $S^o$. Since certain irreducible components of $\mathcal{M}_{\mathrm{slc}}(d, \Phi_c, \Gamma, \sigma)$ may parameterize families consisting exclusively of singular varieties, we introduce the notion of birational admissibility-a condition designed to exclude degenerate fibers in the birational classification framework. A locally closed substack $\mathcal{S} \subset \mathcal{M}_{\mathrm{slc}}(d, \Phi_c, \Gamma, \sigma)$ is \emph{strictly birationally admissible} if the universal family over $\mathcal{S}$ admits a simple normal crossing strict log birational model (see Definition \ref{defn_st_bir_admissible}).
	
	Let $\mathcal{S} \subset \mathcal{M}_{\mathrm{slc}}(d, \Phi_c, \Gamma, \sigma)$ be a strictly birationally admissible locally closed substack. Let $S$ be a reduced algebraic variety. A family of stable minimal models over $S$ is called $\mathcal{S}$-admissible if all of its fibers lie in $\mathcal{S}$. The set of isomorphism classes of $\mathcal{S}$-admissible families over $S$ is in natural bijection with the set $\mathrm{Hom}(S,\mathcal{S})$ of morphisms $S \to \mathcal{S}$. Following \cite{Kovacs2011}, we define an equivalence relation $\simeq_{\mathcal{S}}$ on $\mathrm{Hom}(S,\mathcal{S})$: for $f,g \in \mathrm{Hom}(S,\mathcal{S})$, we write $f \simeq_{\mathcal{S}} g$ if there exists a reduced scheme $T$ of finite type, a morphism $F \colon S \times T \to \mathcal{S}$ (i.e., an $\mathcal{S}$-admissible family over $S \times T$), and two closed points $x_1, x_2 \in T$ such that $f = F|_{S \times \{x_1\}}$ and $g = F|_{S \times \{x_2\}}$. The main results of this paper are the following two theorems.	
	\begin{thm}[Deformation boundedness]\label{thm_main_set}
		Let $S$ be an algebraic variety whose singular locus $S_{\mathrm{sing}}$ is a compact algebraic subset . Let $\mathcal{S} \subset \mathcal{M}_{\mathrm{slc}}(d, \Phi_c, \Gamma, \sigma)$ be a strictly birationally admissible locally closed substack. Then the quotient set
		\[
		\mathrm{Hom}(S,\mathcal{S}) / \simeq_{\mathcal{S}}
		\]
		is finite.
	\end{thm}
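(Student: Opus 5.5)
We follow the strategy of Kov\'acs--Lieblich \cite{Kovacs2011}: deformation boundedness follows once we know that, for a fixed compactification, the degree of the pulled-back polarization is uniformly bounded. Concretely, the plan reduces Theorem \ref{thm_main_set} to two inputs:
\begin{enumerate}
\item a uniform bound $\deg_{\bar C} \phi^*\lambda \le c(\mathcal{S}, S)$ for a fixed ample $\mathbb{Q}$-line bundle $\lambda$ on $\mathcal{M}_{\mathrm{slc}}(d,\Phi_c,\Gamma,\sigma)$ (or its coarse space $M$), valid for all $\phi\in \mathrm{Hom}(S,\mathcal{S})$;
\item the standard fact that morphisms of bounded degree from a fixed projective scheme to a projective scheme form a finite-type Hom-scheme.
\end{enumerate}
Since each connected component of a finite-type parameter space gives a single $\simeq_{\mathcal{S}}$-class, finiteness follows.

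\textbf{Step 1: compactify and reduce to curves.} Choose a compactification $\bar S \supset S$. Because $S_{\mathrm{sing}}$ is compact, we may resolve singularities away from the boundary only in the interior and take $\bar S$ so that $\bar S\setminus S$ is a divisor near which $\bar S$ is smooth with normal crossings. Fix an ample $H$ on $\bar S$.

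Any $\phi\colon S\to\mathcal{S}$ induces a rational map $\bar S \dashrightarrow M$ to the projective coarse space. The properness of $\mathcal{M}_{\mathrm{slc}}$ and the compactness of $S_{\mathrm{sing}}$ are used to extend it, after a blow-up along the boundary that depends only on bounded data, to a morphism $\bar\phi$.

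Boundedness of $\deg \bar\phi^*\lambda\cdot H^{\dim S-1}$ reduces, by cutting with general complete intersection curves $C=\bar C\cap S$ in $|mH|$, to a bound of the form
\[
\deg_{\bar C}\bar\phi^*\lambda \;\le\; a\,(2g(\bar C)-2+\#(\bar C\setminus C)) + b,
\]
where $a,b$ depend only on $\mathcal{S}$. This is an Arakelov-type inequality, and its shape is exactly the log-canonical degree of $(\bar C,\bar C\setminus C)$.

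\textbf{Step 2: the Arakelov inequality (main obstacle).} Over a curve $C$, the $\mathcal{S}$-admissible family has, by strict birational admissibility (Definition \ref{defn_st_bir_admissible}), a simple normal crossing strict log birational model $(Y,B)\to C$. This model is log smooth over $C$, with unipotent-reducible degenerations after base change.

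Birkar's construction gives the polarization $\lambda$ as a combination of $\det f_*\mathcal{O}(r(K+B+tA))$-type bundles. These are controlled by direct images of pluri-log-canonical forms. The next step is to realize these direct images as, or embed them into, the lowest Hodge pieces of a variation of Hodge structure via the cyclic cover trick. The key ingredients are:
\begin{itemize}
\item the fibers have bounded invariants (dimension, volume, coefficient set $\Phi_c$), so the cyclic covers have bounded degree and the relevant local systems have bounded rank;
\item the Viehweg--Zuo Higgs-sheaf argument, or Griffiths curvature negativity, bounds the degree of the Hodge bundle by $\mathrm{rank}\cdot\frac{w}{2}\deg\Omega_{\bar C}(\log(\bar C\setminus C))$.
\end{itemize}

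The hard part is that $\lambda$ involves both $K+B$ and the non-canonical polarization $A$, with a varying twist parameter. We therefore need a uniform Hodge-theoretic interpretation of $\det f_*(r(K_Y+B+tA))$. This would require showing that the generically finite birational data can be chosen uniformly over $\mathcal{S}$, for instance by stratifying $\mathcal{S}$ into finitely many pieces, each carrying a fixed simultaneous log resolution.

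\textbf{Step 3: conclude.} With the degree bound, the graphs of the maps $\bar\phi$ have bounded Hilbert polynomial in $\bar S\times M$, so they lie in finitely many components of a Hilbert scheme. The locus where the map factors through $\mathcal{S}$ over $S$ is locally closed, and the induced families over $S\times T$ realize $\simeq_{\mathcal{S}}$. After passing to the reduced structure and using a finite cover $T'\to T$ to lift coarse maps to the DM stack, each connected component yields one class. Hence $\mathrm{Hom}(S,\mathcal{S})/\simeq_{\mathcal{S}}$ is finite.
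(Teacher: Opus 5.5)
Your reduction is the same as the paper's. The paper proves this theorem in one line by invoking \cite[Theorem 1.7]{Kovacs2011}: $\mathcal{S}$ is automatically compatible, and weak boundedness is exactly the curve bound of Corollary \ref{cor_Arakelov_dim1}, i.e.\ your input (1). Your Steps 1 and 3 sketch what Kov\'acs--Lieblich prove. The genuine gap is Step 2. You never establish the uniform degree bound. You describe a cyclic-cover route only in general terms, and then explicitly leave open what you call the hard part (``this would require showing that the generically finite birational data can be chosen uniformly over $\mathcal{S}$''). Everything else rests on this bound, so as written you have a reduction, not a proof. Had you quoted Theorem \ref{thm_main_numbound_moduli} in the form (\ref{align_main_Arakelov_inequlaity_dim1}), your argument would coincide with the paper's.

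The obstacle you flag is also not the real one, and the mechanism you propose would not overcome it.

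\emph{The polarization is harmless.} For a polarization datum $(a,r,j)$, the family $(X,\Delta:=B+aA)$ is locally stable with slc fibers. So $\lambda_{a,r}=\det f_*(r(K_{X/S}+\Delta))$ is an ordinary pluri-log-canonical determinant. Birational admissibility then gives, over $S^o$, an snc model with the same direct images.

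\emph{Uniformity needs no stratification or uniform resolution.} After a base change on which the family extends, the paper embeds $\det(W)^{\otimes kr}\subset W^{\otimes klr}\simeq\widetilde f^{[klr]}_*(\cdots)$ (Lemma \ref{lem_mild_pushforward}). It transports this to the snc model of the $klr$-fold fiber product via Lemma \ref{lem_Viehweg_inclusion}, at the cost of a twist by $\mathcal{O}_S(-D)$. It then applies Theorems \ref{thm_VZ_construction} and \ref{thm_Hodge_Arakelov}. There the line bundle itself is bounded through the Higgs subsheaf it generates in an admissible variation of mixed Hodge structure, combining \cite{CP2019} with Simpson's parabolic semistability of degree zero. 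That bound depends only on the length of the Higgs complex, which is at most the relative dimension $klrd$. Hence the constant depends only on $(d,\Phi_c,\Gamma,\sigma)$, whichever snc model is used for each individual family, and letting $k\to\infty$ removes the $\frac{2}{k}\deg D$ error.

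\emph{Your proposed mechanism does not give this.} A bound on the degree of a whole Hodge bundle (your second bullet) does not bound a line subsheaf of it. Moreover, $f_*(r(K+\Delta))$ for $r\ge 2$ is not itself a Hodge bundle.

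A smaller point: in Step 1, the blow-up resolving $\bar S\dashrightarrow M$ depends on $\phi$, not on bounded data. Kov\'acs--Lieblich avoid this by bounding degrees only on curves, where the maps extend automatically, and then using \cite[Proposition 2.6]{Kovacs2011}.
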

	In the language of algebraic stacks, this boundedness statement admits the following geometric refinement:
	\begin{thm}[Boundedness of Hom-stacks]\label{thm_main_Hom}
		Let $S$ be an algebraic variety whose singular locus $S_{\mathrm{sing}}$ is a compact algebraic subset. Let $\overline{S}$ be a compactification of $S$, and let $Z:=\overline{S} \setminus S$. Let $\mathcal{S} \subset \mathcal{M}_{\mathrm{slc}}(d, \Phi_c, \Gamma, \sigma)$ be a strictly birationally admissible locally closed substack, and let $\partial \mathcal{S} := \overline{\mathcal{S}} \setminus \mathcal{S}$ denote its boundary in the closure $\overline{\mathcal{S}} \subset \mathcal{M}_{\mathrm{slc}}(d, \Phi_c, \Gamma, \sigma)$. Then the Hom-stack
		\[
		\mathrm{Hom}\big((\overline{S},Z),\, (\overline{\mathcal{S}}, \partial \mathcal{S})\big)
		\]
		is of finite type.
	\end{thm}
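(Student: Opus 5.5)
The plan follows the strategy of Kov\'acs--Lieblich \cite{Kovacs2011}: reduce finite-typeness of the Hom-stack to a uniform bound on the degree of the pull-back of an ample line bundle. Since $\mathcal{M}_{\mathrm{slc}}(d,\Phi_c,\Gamma,\sigma)$ is a proper Deligne--Mumford stack with projective coarse moduli space, we fix a $\mathbb{Q}$-line bundle $\lambda$ on $\overline{\mathcal{S}}$ that descends to an ample line bundle on the coarse space. A natural choice is a suitable power of the determinant of the pushforward of a relatively ample multiple of $K_{X/\mathcal{S}}+\Delta$ (plus the polarization), as in Birkar's construction. Morphisms $\overline{S}\to\overline{\mathcal{S}}$ with bounded $\lambda$-degree against a fixed ample class $H$ on $\overline{S}$ form a finite type substack: this follows from the standard boundedness of Hom-schemes of bounded degree into projective schemes applied on the coarse space, combined with the fact that lifting to the DM stack is quasi-finite after passing to a finite cover. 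The condition that $Z$ maps into $\partial\mathcal{S}$ and the complement into $\mathcal{S}$ is locally closed. So it suffices to prove the following: there is a constant $C$, depending only on $(\overline{S},Z,H)$ and on $\mathcal{S}$, such that $\deg_H \phi^*\lambda\le C$ for every $\phi$ with $\phi^{-1}(\mathcal{S})=S$.

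By cutting $\overline{S}$ with general members of $|mH|$ we reduce to curves. The condition that $S_{\mathrm{sing}}$ is compact ensures that, after resolving, a general complete intersection curve $C\subset\overline{S}$ meets $Z$ transversally in $\deg_H Z$ points and lies in the smooth locus of $S$. The degree $\deg_H\phi^*\lambda$ equals $\deg(\phi|_C)^*\lambda$ up to a factor, so we need an Arakelov-type inequality
\[
\deg (\phi|_C)^*\lambda \le a\cdot\bigl(2g(C)-2+\#(C\cap Z)\bigr),
\]
with $a$ depending only on $\mathcal{S}$. Both $g(C)$ and $\#(C\cap Z)$ are bounded in terms of $(\overline{S},Z,H)$.

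The key step, and the main obstacle, is this Arakelov inequality. Here strict birational admissibility enters. The universal family over $\mathcal{S}$ admits an snc strict log birational model, so over $C\setminus Z$ the pulled-back family is birational to a log smooth family $(Y,B)\to C\setminus Z$. Then $f_*\mathcal{O}(m(K_{Y/C}+B))$, or the relevant pushforward defining $\lambda$, carries a singular Hermitian metric with Griffiths semi-negative curvature in the dual, or equivalently arises from a (log) variation of Hodge structure with bounded rank and weights. First I would try the Viehweg--Zuo approach: construct a Higgs subsheaf with nonzero maps into $\Omega^1_C(\log Z)^{\otimes k}$ and use semistability/slope arguments to bound $\deg\det f_*\omega^{m}$ by $\mathrm{rank}\cdot k\cdot\deg\Omega^1_C(\log Z)$. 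Alternatively one can use the Hodge metric curvature bounds of Peters/Zuo on the VHS period map. The ranks and $m$ are uniform over $\mathcal{S}$ because $\mathcal{S}$ is of finite type. Care is needed because the family over $C$ may be non-isotrivially degenerate only at $Z$, and the determinant bundles must be compared with the Deligne extension. Once this bound holds, the theorem follows.
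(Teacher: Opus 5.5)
Your proposal follows the paper's own route. Finite-typeness is reduced, via \cite[Proposition 2.6]{Kovacs2011} and Olsson's theorem \cite{Olsson2007} that $\mathrm{Hom}(\overline{S},\mathcal{M})\to\mathrm{Hom}(\overline{S},M)$ is of finite type (this theorem is what should replace your ``quasi-finite after a finite cover'' remark), to the uniform curve bound $\deg(\phi|_C)^*\lambda_{a,r}\le\tfrac{lrd}{2}\deg(K_C+D)$, which is exactly Corollary \ref{cor_Arakelov_dim1}. The paper proves that bound by the Viehweg--Zuo method you name first: it embeds $\det(W)^{\otimes kr}\hookrightarrow W^{\otimes klr}\simeq f^{[klr]}_*\mathcal{O}(r(K+\Delta))$, compares this with the snc model over $C\setminus Z$ via Viehweg's base-change inclusion and the Viehweg--Zuo Higgs sheaf (each step costing a harmless twist by $\mathcal{O}(-D)$), and then applies Simpson's parabolic semistability. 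Your Hodge-metric alternative would not apply directly, since $\det f_*\mathcal{O}(m(K+B))$ is not itself a Hodge bundle.
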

    The admissibility condition is essential for the validity of the boundedness results (see Examples \ref{exmp_1} and \ref{exmp_2}).  
    For the moduli stack $\overline{\mathcal{M}}_{g,n}$ of $n$-pointed stable curves of genus $g$, where $2g - 2 + n > 0$, the boundary divisor $\partial \mathcal{M}_{g,n} := \overline{\mathcal{M}}_{g,n} \setminus \mathcal{M}_{g,n}$ is a simple normal crossing divisor. It induces a canonical stratification of $\overline{\mathcal{M}}_{g,n}$, with the open stratum $\mathcal{M}_{g,n}$ being dense. Each stratum in this stratification parametrizes stable curves of a fixed topological type (i.e., fixed dual graph), and is therefore strictly birationally admissible. When $\mathcal{S} = \mathcal{M}_g$, Theorem \ref{thm_main_set} recovers the classical Shafarevich conjecture for families of smooth projective curves of genus $g$. When $\mathcal{S} = \mathcal{M}_{g,n}$, the theorem yields deformation boundedness for families of smooth $n$-pointed curves of genus $g$. Finally, when $\mathcal{S}$ is a  stratum of the boundary $\partial \mathcal{M}_{g,n}$, Theorem \ref{thm_main_set} implies deformation boundedness for families of $n$-pointed stable curves whose fibers all share the same topological type.
	
	Another illustrative example arises in the KSB compactification of the moduli space of canonically polarized varieties. Let $\mathcal{M}^{\mathrm{sm}}_{d,v}$ denote the moduli stack of smooth $d$-dimensional canonically polarized varieties with canonical volume $v$. This stack is a strictly birationally admissible open substack of its KSB compactification $\mathcal{M}^{\mathrm{KSB}}_{d,v}$. Consequently, Theorem \ref{thm_main_set} implies deformation boundedness for families of smooth canonically polarized $d$-folds of fixed volume $v$-a result originally established by Bedulev-Viehweg \cite{Viehweg2000} and Kov\'acs-Lieblich \cite{Kovacs2011}. More generally, let $\mathcal{M}^{\mathrm{snc}}_{d,v}$ be the moduli stack parametrizing simple normal crossing pairs $(X,D)$ (as in Definition \ref{defn_SNC_family}) satisfying: (i) $K_X + D$ is ample; (ii) $\dim X = d$; and (iii) $\mathrm{vol}(K_X + D) = v$. Then Theorem \ref{thm_main_set} yields deformation boundedness for families of such pairs over any fixed base variety $S$ whose singular locus is compact.
	
	Given a functorial semi-log desingularization procedure (see, e.g., \cite{Bierstone2013} and \cite[\S 10.4]{Kollar2013}), one may construct a stratification of $\mathcal{M}_{\mathrm{slc}}(d,\Phi_c,\Gamma,\sigma)$ whose strata are all strictly birationally admissible (\cite[\S 6]{stjc2025}). Such a stratification is termed a \emph{birationally admissible stratification}. In contrast to the canonical stratification of $\overline{\mathcal{M}}_{g,n}$-which is uniquely determined by topological type-the existence of a birationally admissible stratification is guaranteed, but it is generally non-unique, as it depends on the choice of functorial desingularization. A prototypical example occurs in the KSB compactification of the moduli space of canonically polarized varieties: there, the open substack parameterizing smooth fibers forms a strictly birationally admissible stratum, while the stratification on the boundary arises from the semi-log canonical desingularization of stable pairs.
	
	The proofs of Theorems \ref{thm_main_set} and \ref{thm_main_Hom} rely on an Arakelov-type inequality: a uniform upper bound for the degree of the classifying morphism with respect to a certain ample $\mathbb{Q}$-line bundle on the coarse moduli space $M_{\mathrm{slc}}(d,\Phi_c,\Gamma,\sigma)$.  
	
	Let $f: (X, B), A \to S$ be a family of $(d,\Phi_c,\Gamma,\sigma)$-stable minimal models over a reduced scheme $S$ (c.f. \cite{Birkar2022} or \S \ref{section_moduli}). Since the moduli stack $\mathcal{M}_{\mathrm{slc}}(d,\Phi_c,\Gamma,\sigma)$ is of finite type, for all sufficiently small $a > 0$ and sufficiently large integers $r \gg 1$, the direct image sheaf $f_*\big(r(K_{X/S} + B + aA)\big)$ is locally free and commutes with arbitrary base change. Consequently, the assignment  
	\[
	f \mapsto f_*\big(r(K_{X/S} + B + aA)\big)
	\]  
	defines a locally free coherent sheaf $\Lambda_{a,r}$ on $\mathcal{M}_{\mathrm{slc}}(d,\Phi_c,\Gamma,\sigma)$. Denote its determinant line bundle by $\lambda_{a,r} := \det(\Lambda_{a,r})$. As $\mathcal{M}_{\mathrm{slc}}(d,\Phi_c,\Gamma,\sigma)$ is a Deligne-Mumford stack, some positive integer power $\lambda_{a,r}^{\otimes k}$ descends to a genuine line bundle on the coarse moduli space $M_{\mathrm{slc}}(d,\Phi_c,\Gamma,\sigma)$. We therefore regard $\lambda_{a,r}$ as a $\mathbb{Q}$-line bundle on $M_{\mathrm{slc}}(d,\Phi_c,\Gamma,\sigma)$. Crucially, $\lambda_{a,r}$ is ample for all sufficiently small $a > 0$ and sufficiently large $r$ (\cite{Kovacs2017,Fujino2018}), which ensures the projectivity of $M_{\mathrm{slc}}(d,\Phi_c,\Gamma,\sigma)$.  
	
	The central technical result of this paper is the following uniform Arakelov-type bound on the degree of the classifying morphism with respect to $\lambda_{a,r}$.
	
	\begin{thm}[Arakelov inequality for stable families]\label{thm_main_numbound_moduli}
		Let $f^o \colon (X^o, B^o), A^o \to S^o$ be a birationally admissible family of $(d,\Phi_c,\Gamma,\sigma)$-stable minimal models over a smooth quasi-projective $n$-fold $S^o$, inducing a classifying morphism $\xi^o \colon S^o \to M_{\mathrm{slc}}(d,\Phi_c,\Gamma,\sigma)$. Let $S$ be a smooth projective compactification of $S^o$ such that $D := S \setminus S^o$ is a reduced simple normal crossing divisor, and suppose $\xi^o$ extends to a morphism $\xi \colon S \to M_{\mathrm{slc}}(d,\Phi_c,\Gamma,\sigma)$\footnote{We do not require that $\xi$ factor through $\mathcal{M}_{\mathrm{slc}}(d,\Phi_c,\Gamma,\sigma)$.}. Let $0<a\ll1$ and $r\gg0$, and define  
		\[
		\mathrm{ind}_\xi(\lambda_{a,r}) := \min\big\{k \in \mathbb{Z}_{>0} \mid \xi^*(\lambda_{a,r}^{\otimes k}) \text{ is a line bundle}\big\}.
		\]  
		If $K_S + D$ is pseudo-effective, then for every movable curve class $\alpha \in N_1(S)$ and every $k \in \mathbb{Z}_{>0}$ divisible by $\mathrm{ind}_\xi(\lambda_{a,r})$, we have  
		\begin{align}\label{align_main_Arakelov_inequality}
			c_1\big(\xi^*\lambda_{a,r}\big) \cdot \alpha 
			&\leq \frac{lrd}{2}n^{klrd-1} \, (K_S + D) \cdot \alpha \;+\; \frac{2}{k}\, D \cdot \alpha,
		\end{align}
		where $l = \operatorname{rank}(\Lambda_{a,r})$.  
		If moreover $K_S+D$ is ample, then  
		\begin{align*}
			c_1(\xi^*\lambda_{a,r})\cdot(K_S+D)^{n-1} \leq \frac{lrd}{n}(K_S+D)^{n}.
		\end{align*}		
		In particular, when $\dim S = 1$ and $K_S+D$ is pseudo-effective, we obtain the degree bound  
		\begin{align}\label{align_main_Arakelov_inequlaity_dim1}
		\deg\, \xi^*(\lambda_{a,r}) \leq \frac{lrd}{2} \cdot \deg(K_S + D).
	    \end{align}
	\end{thm}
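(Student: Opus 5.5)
The plan is to run the Viehweg--Zuo / Arakelov strategy: realize $\xi^*\lambda_{a,r}$ as the determinant of a Hodge-theoretic bundle, and then control its degree by a Higgs-field (negativity of the kernel) argument.

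\textbf{Step 1: reduction to a graded Higgs sheaf.} Set $L := r(K_{X/S}+B+aA)$, so that the fibre of $\Lambda_{a,r}$ is $H^0(X_s, L_s)$, of rank $l$. Using birational admissibility, replace $f^o$ by an snc strict log birational model $g\colon (Y^o,E^o)\to S^o$. Then apply the cyclic covering trick of Viehweg to a section of $L$ (twisted so that it is globally generated). This realizes $f^o_*L$, up to a controlled twist, as the lowest piece $F^{0}$ of a graded logarithmic Higgs bundle
\[
(E=\oplus_p E^{p},\theta),\qquad \theta\colon E^p\to E^{p+1}\otimes\Omega_S(\log D).
\]
This Higgs bundle is the Deligne extension of a variation of Hodge structure of weight $\le rd$ coming from the middle cohomology of the covering. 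It has unipotent monodromy after a Kawamata cover, and the index $k$ controls this cover; this is the source of the correction term $\frac{2}{k}D\cdot\alpha$. Here I would assume the paper's earlier constructions and the standard Kawamata--Viehweg base change.

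\textbf{Step 2: the iterated Kodaira--Spencer chain.} Consider the chain of maps
\[
\det F^0 \to F^{1}\otimes\Omega_S(\log D)\to\cdots\to F^{q}\otimes\Omega_S(\log D)^{\otimes q}, \qquad q\le rd .
\]
The Viehweg--Zuo lemma (negativity of the kernels of Higgs fields, via Zuo/Griffiths curvature) implies that the image of the last nonzero map is seminegative on movable curves. Applying this to $\mathrm{Sym}$ or tensor powers of rank $l$ (so the total length becomes $klrd$), and bounding slopes of subsheaves of $\Omega_S(\log D)^{\otimes m}$ along a movable class $\alpha$ by $m\,(K_S+D)\cdot\alpha$ (Campana--P\u{a}un / generic semipositivity with $K_S+D$ pseudo-effective), gives
\[
\deg_\alpha \det F^0 \le \tfrac{\text{length}}{2}\cdot\text{rank}\cdot(K_S+D)\cdot\alpha .
\]
The numerical factor $n^{klrd-1}$ comes from crude rank bounds on $\Omega^{\otimes m}$.

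\textbf{Step 3: the ample case and curves.} When $K_S+D$ is ample, use the Miyaoka--Yau / Simpson polystability of $\Omega_S(\log D)$ with respect to $K_S+D$ (Guenancia's Kähler--Einstein metric). With this, subsheaves of tensor powers have slope at most $\frac{m}{n}(K_S+D)^n$, and the factor $n^{klrd-1}$ disappears, giving the second inequality. For $n=1$ the first inequality already specializes to (\ref{align_main_Arakelov_inequlaity_dim1}), once the $D$-term is removed. To remove it, let $k\to\infty$ via base change to unipotent monodromy, which is legitimate for curves.

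\textbf{Main obstacle.} The hard step is Step 1 in this singular, non-normal (slc) setting. One must show that $\det f_*L$ injects into, or agrees up to effective boundary corrections with, the Hodge bundle of the snc birational model. That requires comparing sections on the stable model with log-canonical sections on the snc model, which needs the strictness of the log birational model. I would first try to prove $f_*L\cong g_*(r(K_{Y/S}+E+aA_Y))$ fibrewise, then use Fujino--Kovács semipositivity to handle the extension across $D$.
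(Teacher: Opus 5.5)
Your ingredients are the right ones: a Viehweg--Zuo Higgs sheaf, Campana--P\u{a}un generic semipositivity, and K\"ahler--Einstein semistability in the ample case. The proposal nevertheless has two genuine gaps.

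\emph{First, you never produce a rank-one sheaf built from $\xi^*\lambda_{a,r}$ to which the cyclic-cover construction can be applied.}
\begin{itemize}
\item For $r\ge 2$, $f_*\mathcal{O}(r(K_{X/S}+B+aA))$ is not (a twist of) a Hodge bundle $F^0$, and $\det F^0\to F^1\otimes\Omega_S(\log D)$ is not induced by a Higgs field.
\item Theorem \ref{thm_VZ_construction} only accepts a rank-one $\mathcal{A}$ with a nonzero map $\mathcal{A}^{\otimes r}\to g_*\mathcal{O}(rK_{Y/S}+r\Delta)$, for a family $g$ that is snc over $S^o$. It returns $\mathcal{A}(-D)\hookrightarrow\widetilde H^w$ with a Higgs chain of length at most the \emph{relative dimension} of $g$, not ``weight $\le rd$''.
\item Your remark about tensor powers of rank $l$ points the right way, but the step that makes it work is missing. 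That step is the alternating-sum embedding $(\det W)^{\otimes kr}\hookrightarrow W^{\otimes klr}$ combined with $(W^{\otimes klr})^{\vee\vee}\simeq\widetilde f^{[klr]}_*\mathcal{O}\big(rK_{\widetilde X^{[klr]}/\widetilde S}+r\widetilde\Delta^{[klr]}\big)$ from Lemma \ref{lem_mild_pushforward}. The construction is then applied to the $klr$-fold fibre product of the snc model, and $klrd$ is its relative dimension.
\item Moreover, $\xi$ only lands in the coarse space, so there is no family over $S$ whose direct image you could take. The paper passes to a generically finite $\sigma\colon\widetilde S\to S$ carrying a stable family, which exists by properness of the stack. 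It compares with the snc model via Lemma \ref{lem_Viehweg_inclusion} and (\ref{align_divisors}), at the cost of a twist by $rg_2^*D$, and descends by the trace map. The result is a nonzero map $\big(\xi^*\lambda_{a,r}^{\otimes k}\otimes\mathcal{O}_S(-D)\otimes I_Z\big)^{\otimes r}\to g_{2*}\mathcal{O}(rK+r\Delta_2)$.
\item Fujino/Kov\'acs--Patakfalvi semipositivity cannot replace this step. It gives lower bounds on degrees, not a sheaf map over all of $S$.
\item This twist, together with the $-D$ in $\mathcal{A}(-D)$, divided by $k$, is the origin of $\frac{2}{k}D\cdot\alpha$. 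Here $k$ is just a multiple of the index of $\xi^*\lambda_{a,r}$ and has nothing to do with a Kawamata cover. In the curve case and the ample case you simply let $k\to\infty$. The leading coefficient is $\frac{1}{k(klrd+1)}\sum_{p=1}^{klrd}p=\frac{lrd}{2}$ for $n=1$, resp.\ $\frac{1}{k}\cdot\frac{klrd}{n}=\frac{lrd}{n}$ in the ample case, and is independent of $k$.
\end{itemize}

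\emph{Second, kernel negativity does not give the factor $\tfrac12$.}
\begin{itemize}
\item Kernel negativity only controls the last nonzero piece $L^{m_0}$. Combined with Campana--P\u{a}un at level $m_0$, it yields $\operatorname{rank}(L^{m_0})\,c_1(L)\cdot\alpha\le m_0n^{m_0-1}(K_S+D)\cdot\alpha$.
\item This produces $lrd\,n^{klrd-1}$ in (\ref{align_main_Arakelov_inequality}) and $\deg\xi^*\lambda_{a,r}\le lrd\,\deg(K_S+D)$ for curves. Both are twice the stated bounds, and the second is not the sharp (\ref{align_main_Arakelov_inequlaity_dim1}).
\item Your ``$\frac{\text{length}}{2}$'' is asserted, not derived. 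In the ample case the target $\frac{lrd}{n}$ carries no $\frac12$, so your Step 3 is in line with the paper there.
\end{itemize}
The paper gets $\tfrac12$ from a statement about the \emph{whole} Higgs subsheaf generated by $L$:
\begin{itemize}
\item Take $\mathcal{L}=\bigoplus_{p=0}^{m_0}L^p_{w_0}$ inside $\mathrm{Gr}^W_{w_0}\widetilde H$. Since the fibres are snc, one has an admissible VMHS and must pass to a weight-graded piece.
\item Simpson's parabolic semistability with parabolic degree zero, applied on a general complete-intersection curve, gives $c_1(\mathcal{L})\cdot\alpha\le 0$.
\item Sum $\big(pn^{p-1}(K_S+D)-\operatorname{rank}(L^p_{w_0})c_1(L)+c_1(L^p_{w_0})\big)\cdot\alpha\ge 0$ over all $p$, and divide by $\operatorname{rank}\mathcal{L}\ge m_0+1$.
\item This gives $c_1(L)\cdot\alpha\le\frac{1}{m_0+1}\sum_{p=1}^{m_0}pn^{p-1}(K_S+D)\cdot\alpha\le\frac{m_0}{2}n^{m_0-1}(K_S+D)\cdot\alpha$.
\end{itemize}
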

    The precise quantitative constraints on the parameters $a$ and $r$, which depend only on the numerical invariants $(d,\Phi_c,\Gamma,\sigma)$, are specified in Theorem \ref{thm_Arakelov_family}. Moreover, $\mathrm{ind}_\xi(\lambda_{a,r}) = 1$ whenever the morphism $\xi \colon S \to M_{\mathrm{slc}}(d,\Phi_c,\Gamma,\sigma)$ lifts to the moduli stack $\mathcal{M}_{\mathrm{slc}}(d,\Phi_c,\Gamma,\sigma)$-equivalently, whenever the family $f^o :(X^o,B^o) \to S^o$ extends to a family of stable minimal models over the base $S$.
    Examples \ref{exmp_1} and \ref{exmp_2} demonstrate that the birational admissibility condition is indispensable for the validity of the Arakelov-type inequalities.
    
    The constant appearing in inequality (\ref{align_main_Arakelov_inequality}) is not optimal; a refined version with an improved constant is established in Theorem \ref{thm_Arakelov_family}. In contrast, inequality (\ref{align_main_Arakelov_inequlaity_dim1}) is sharp: equality holds if the classifying morphism of the family factors through a Shimura curve in the moduli space of principally polarized abelian varieties (cf. \cite{VZ2004}).
    
    The proof of Theorem \ref{thm_main_numbound_moduli} relies on the Hodge-theoretic Arakelov inequality established in Theorem \ref{thm_Hodge_Arakelov} and the Viehweg-Zuo type construction of a logarithmic Higgs sheaf associated with an admissible family of stable minimal models, as developed in \cite{stjc2025} (see also Theorem \ref{thm_VZ_construction}). Assume, for simplicity, that $f^o:X^o\to S^o$ is a family of canonically polarized manifolds admitting an extension to a family $f:X\to S$ of KSB-stable varieties. In recent work of Kov\'acs-Taji \cite{Kovacs2024}, the authors construct an embedding  
    \begin{align}\label{align_intro_embed}
    	\xi^\ast\lambda_{0,r}^{\otimes r}\simeq \det f_\ast\big(\mathcal{O}_X(rK_{X/S})\big)^{\otimes r} \hookrightarrow f^{[\mu]}_{\ast}\big(\mathcal{O}_{X^{[\mu]}}(rK_{X^{[\mu]}/S})\big),
    \end{align}  
    valid for sufficiently large $\mu$, where $\mu$ depends only on the discrete invariants $(d,\Phi_c,\Gamma,\sigma)$. Here, $f^{[\mu]}:X^{[\mu]}\to S$ denotes the $\mu$-fold fiber product of $f$ over $S$. The embedding gives rise to an embedding of $\det f_\ast\big(\mathcal{O}_X(rK_{X/S})\big)^{\otimes r}$ into a variation of Hodge structures. One may apply the positivity of the logarithmic cotangent bundle $\Omega_S(\log D)$ (\cite{CP2019}) to derive a higher-dimensional Arakelov-type inequality.
    
    In contrast to \cite{Kovacs2024}, the central technical innovation of this paper consists in replacing the embedding (\ref{align_intro_embed}) with one arising from the alternating sum construction:  
    \[
    \det f_\ast\big(\mathcal{O}_X(rK_{X/S})\big)^{\otimes r} \hookrightarrow \bigotimes^{lr} f_\ast\big(\mathcal{O}_X(rK_{X/S})\big) \simeq f^{[lr]}_{\ast}\big(\mathcal{O}_{X^{[lr]}}(rK_{X/S})\big),
    \]  
    where $l = \operatorname{rank} f_\ast\big(\mathcal{O}_X(rK_{X/S})\big)$. By the geometric characterization of the canonical extension of admissible variations of mixed Hodge structure of geometric origin (\cite{stjc2025}), the embeding give rise to an embedding of $\det f_\ast\big(\mathcal{O}_X(rK_{X/S})\big)^{\otimes r}$ into the canonical extension of a variation of mixed Hodge structure on $S^o$. Since $lr \ll \mu$ in general, this construction leads to a quantitatively sharper Arakelov-type inequality. Furthermore, in addition to the positivity of $\Omega_S(\log D)$, we exploit the parabolic semistability of the Deligne extension of the variation of Hodge structure on $S^o$ (\cite{Simpson1990}) to establish the optimal Arakelov inequality (\ref{align_main_Arakelov_inequlaity_dim1}).
      
	This paper is organized as follows. In Section 2, we explain how an Arakelov-type inequality arises from the existence of a certain variation of mixed Hodge structure (Theorem \ref{thm_Hodge_Arakelov}), building on the generic semipositivity theorem of log cotangent bundle established by Campana-P\v{a}un \cite{CP2019} and Simpson's non-abelian Hodge theory \cite{Simpson1990}. In Section 3, we establish the main Arakelov-type inequality for birationally admissible families of $(d,\Phi_c,\Gamma,\sigma)$-stable minimal models (Theorem \ref{thm_main_numbound_moduli}=Theorem \ref{thm_Arakelov_family}). In Section 4, building on the foundational work of Kov\'acs-Lieblich \cite{Kovacs2011} and M. Olsson \cite{Olsson2007}, we establish the boundedness of birationally admissible families over a fixed base (Theorems \ref{thm_main_set} and \ref{thm_main_Hom}).
	\newline
	\newline
	\textbf{Acknowledgments.} The author is grateful to Professor S\'{a}ndor J. Kov\'{a}cs and Professor Kang Zuo for their interest in this work and for many valuable comments and suggestions.
	\newline
	\newline
	\textbf{Notation and conventions.}
	\begin{itemize}
		\item Let $F$ be a torsion-free coherent sheaf on a smooth algebraic variety $X$. Define its dual by $F^\vee := \mathcal{H}om_{\mathcal{O}_X}(F, \mathcal{O}_X)$; then $F^{\vee\vee}$ is the reflexive hull of $F$. The determinant line bundle $\det(F)$ is defined as the reflexive hull of $\bigwedge^{\operatorname{rank} F} F$.
	\end{itemize}

	
\section{Hodge theoretic Arakelov inequality}
In this section, we establish an Arakelov-type inequality that arises from a certain variation of mixed Hodge structure. The main results of this section are Theorem \ref{thm_Hodge_Arakelov} and Theorem \ref{thm_VZ_Arakelov}.  
\subsection{Variation of mixed Hodge structure}
Let $S$ be a complex manifold. Denote by $\sA^k_S$ the sheaf of $C^\infty$ $k$-forms on $S$, and by $\sA^{p,q}_S$ the sheaf of $C^\infty$ $(p,q)$-forms on $S$.
\begin{defn}  
	A \emph{pre-variation of Hodge structures} $\bV = (\bV_{\bR}, V, \nabla, F^\bullet)$ of weight $m$ on $S$ consists of the following data:  
	\begin{itemize}  
		\item An $\bR$-local system $\bV_{\bR}$ on $S$, together with a holomorphic flat connection $(V, \nabla)$ corresponding to $\bV_{\bC} := \bV_{\bR} \otimes_{\bR} \bC$ via the Riemann-Hilbert correspondence.  
		\item A regular, decreasing filtration $F^\bullet$ of holomorphic subbundles of $V$, such that each graded quotient $F^p / F^{p+1}$ is locally free, and for every point $s \in S$, the fiber $\bV(s) := (\bV_{\bR}(s), F^\bullet(s))$ forms a pure $\bR$-Hodge structure of weight $m$.  
	\end{itemize}  		
	An \emph{$\bR$-polarization} on $\bV$ is a bilinear map between $\bR$-local systems  
	$$ Q : \bV_{\bR} \otimes \bV_{\bR} \to \bR_S(-m), $$  
	which induces an $\bR$-polarization on $\bV(s)$ for every point $s \in S$.  
	
	A \emph{variation of Hodge structures} of weight $m$ is a pre-variation of Hodge structures $\bV = (\bV_{\bR}, V, \nabla, F^\bullet)$ of weight $m$ satisfying the Griffiths transversality condition:  
	$$ \nabla(F^p) \subset F^{p-1} \otimes \Omega_S, \quad \forall p. $$  
\end{defn}
\begin{defn}  
	A \emph{pre-variation of mixed Hodge structures} $\bV = (\bV_{\bR}, V, \nabla, W_\bullet, F^\bullet)$ on $S$ consists of the following data:  
	\begin{itemize}  
		\item An $\bR$-local system $\bV_{\bR}$ on $S$, together with a finite increasing filtration of local subsystems $W_\bullet$ of $\bV_{\bR}$.  
		\item A holomorphic flat connection $(V, \nabla)$ on $S$, along with a finite increasing filtration of sub flat connections $W_{\bullet,\bR}$, such that $(\bV_{\bR}, W_{\bullet,\bR}) \otimes_{\bR} \bC$ corresponds to $(V, \nabla, W_\bullet)$ via the Riemann-Hilbert correspondence.  
		\item A finite decreasing filtration $F^\bullet$ of holomorphic subbundles of $V$,  
	\end{itemize}  
	such that  
	$$
	{\rm Gr}^W_m(\bV) := ({\rm Gr}^W_m(\bV_{\bR}), {\rm Gr}^W_m(V), {\rm Gr}^W_m(\nabla), F^\bullet {\rm Gr}^W_m)
	$$  
	forms a pre-variation of Hodge structures of weight $m$.  
	
	A \emph{variation of mixed Hodge structures} on $S$ is a pre-variation of mixed Hodge structures $\bV = (\bV_{\bR}, V, \nabla, W_\bullet, F^\bullet)$ on $S$ satisfying the Griffiths transversality condition:  
	$$
	\nabla(F^p) \subset F^{p-1} \otimes \Omega_S, \quad \forall p.
	$$  
	
	A \emph{graded $\bR$-polarization} of a variation of mixed Hodge structures $\bV = (\bV_{\bR}, V, \nabla, W_\bullet, F^\bullet)$ consists of an $\bR$-polarization on each ${\rm Gr}^W_m(\bV)$. A variation of mixed Hodge structures is said to be \emph{graded $\bR$-polarizable} if it admits a graded $\bR$-polarization.  
\end{defn}  
\subsection{Admissible variation of mixed Hodge structure}
Let $\overline{S}$ be a complex manifold, and let $E \subset \overline{S}$ be a simple normal crossing divisor. Define $S := \overline{S} \setminus E$. 

Recall that for a flat connection $(V, \nabla)$ on $S$, the lower canonical extension of $(V, \nabla)$ to $\overline{S}$ is a logarithmic connection  
\[
\nabla: \widetilde{V} \to \widetilde{V} \otimes_{\sO_{\overline{S}}} \Omega_{\overline{S}}(\log E),
\]  
which extends $(V, \nabla)$ such that the real parts of the eigenvalues of the residue map along each component of $E$ lie in the interval $[0,1)$. According to \cite[Proposition 5.4]{Deligne1970}, the lower canonical extension always exists and is unique up to isomorphism. Let $\widetilde{W}_m$ denote the lower canonical extension of $W_m$. For the notion of admissibility, we follow Kashiwara \cite{Kashiwara1986}. 
\begin{defn}  
	Let $\bV = (\bV_{\bR}, V, \nabla, W_\bullet, F^\bullet)$ be a graded $\bR$-polarized variation of mixed Hodge structures on the punctured unit disc $\Delta^\ast$. The variation $\bV$ is called \emph{pre-admissible} with respect to $\Delta$ if the following conditions are satisfied:  
	\begin{itemize}  
		\item The monodromy operator of $\bV_{\bR}$ at the origin is quasi-unipotent.  
		\item The logarithm $N$ of the unipotent part of the residue map ${\rm Res}_0(\nabla)$ admits a weight filtration relative to $\widetilde{W}_\bullet(0)$.  
		\item The filtration $F^\bullet$ extends to a filtration $\widetilde{F}^\bullet$ of subbundles of $\widetilde{V}$ such that ${\rm Gr}_{\widetilde{F}}^p {\rm Gr}^{\widetilde{W}}_m \widetilde{V}$ is locally free for all $p$ and $m$.  
	\end{itemize}  
\end{defn}
\begin{defn}  
	Let $\bV = (\bV_{\bR}, V, \nabla, W_\bullet, F^\bullet)$ be a graded $\bR$-polarized variation of mixed Hodge structures on $S$. The variation $\bV$ is called \emph{admissible} with respect to $\overline{S}$ if, for every holomorphic map $f: \Delta \to \overline{S}$ such that $f(\Delta^\ast) \subset S$, the pullback $(f|_{\Delta^\ast})^\ast(\bV)$ is pre-admissible with respect to $\Delta$.  
\end{defn}
\begin{prop} \emph{(Kashiwara \cite[Proposition 1.11.3]{Kashiwara1986})} \label{prop_Kashiwara_extendHodge}  
	Let $\bV = (\bV_{\bR}, V, \nabla, W_\bullet, F^\bullet)$ be a graded $\bR$-polarized variation of mixed Hodge structures on $S$, which is admissible with respect to $\overline{S}$. Then the Hodge filtration $\{F^p\}$ admits an extension $\{\widetilde{F}^p\}$ on $\overline{S}$ such that the following conditions are satisfied:  
	\begin{enumerate}  
		\item $\widetilde{F}^p$ is a subbundle of $\widetilde{V}$ for all $p$.  
		\item ${\rm Gr}_{\widetilde{F}}^p {\rm Gr}_m^{\widetilde{W}}(\widetilde{V})$ is locally free for all $p$ and $m$.  
		\item $\widetilde{F}^\bullet$ satisfies the Griffiths transversality condition:  
		$$
		\nabla(\widetilde{F}^p) \subset \widetilde{F}^{p-1} \otimes \Omega_{\overline{S}}(\log E), \quad \forall p.
		$$  
	\end{enumerate}  
\end{prop}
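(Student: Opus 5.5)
The plan is to reduce to the one-variable case and then use the $\mathrm{SL}_2$-orbit and nilpotent-orbit theorems for admissible variations, as Kashiwara does. Since $\widetilde{V}$ is a locally free logarithmic extension, the extension $\widetilde{F}^p$ is forced on the complement of a codimension-two subset. There we simply set $\widetilde{F}^p := j_*F^p \cap \widetilde{V}$, where $j\colon S\to\overline{S}$ is the inclusion. This is automatically a saturated subsheaf of $\widetilde{V}$, and it is reflexive. The real work is to show it is a subbundle with locally free graded pieces, which is a local question near a point of $E$.

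First, working locally on a polydisc $\Delta^r\times\Delta^{n-r}$ with $E=\{t_1\cdots t_r=0\}$, I would pass to a finite cover $t_i\mapsto t_i^{m}$ to make the local monodromies unipotent. On the cover the lower canonical extension is Deligne's extension with nilpotent residues $N_i$. Descent back to $\overline{S}$ is harmless, because the lower canonical extension is recovered as the invariants under the Galois group, twisted by the eigenvalue decomposition. Subbundle properties descend since the decomposition is by direct summands.

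Second, in the unipotent case I would untwist the period map: set $\psi(t) = \exp\!\big(-\sum_i \tfrac{\log t_i}{2\pi\sqrt{-1}}N_i\big)F(t)$, which is a single-valued map to the compact dual flag variety. The key claim is that $\psi$ extends holomorphically across $E$. This is exactly where admissibility enters. For the graded pieces ${\rm Gr}^W_m$, Schmid's nilpotent orbit theorem (Cattani–Kaplan–Schmid in several variables) gives the extension. For the mixed case, the pre-admissibility along every curve $\Delta\to\overline{S}$ yields the limit filtration and the relative weight filtration. Kashiwara's argument then upgrades this curve-wise information to the several-variable statement: one shows $\psi$ is bounded and uses Riemann extension, after checking that relative monodromy weight filtrations exist for all cones $\sum a_iN_i$.

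This gives (1) after setting $\widetilde{F}^p := \psi^*(\text{tautological } F^p)$, which is a subbundle. The compatibility of $W$ and $F$ in the limit mixed Hodge structure gives strictness. Hence ${\rm Gr}^p_{\widetilde{F}}{\rm Gr}^{\widetilde{W}}_m$ is locally free, giving (2).

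Third, Griffiths transversality (3) holds on $S$. The sheaf $\widetilde{F}^{p-1}\otimes\Omega_{\overline{S}}(\log E)$ is a subbundle, hence saturated in $\widetilde{V}\otimes\Omega_{\overline{S}}(\log E)$. So the section $\nabla(\widetilde{F}^p)$ vanishes in the quotient on a dense open set, and therefore it vanishes everywhere.

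The main obstacle is the second step in the mixed, several-variable setting: deducing holomorphic extension of $\psi$ from the purely curve-wise admissibility hypothesis. There I would follow Kashiwara's reduction to a neighborhood of a generic point of each stratum together with his several-variable relative-filtration argument.
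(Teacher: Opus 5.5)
The paper gives no argument for this statement: it quotes Kashiwara and uses the result as a black box. So your outline has to stand on its own, and its peripheral parts do.

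A small correction first. A subbundle extending $F^p$ is forced to equal $j_*F^p\cap\widetilde V$ on all of $\overline S$, not just off codimension two, since a saturated subsheaf of a locally free sheaf is determined on a dense open set.

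The descent from the cover $t_i\mapsto t_i^m$ is correct. With the natural equivariant structure one has $\widetilde V=(\pi_*\widetilde V')^G$ and $j_*F^p\cap\widetilde V=(\pi_*\widetilde F'^p)^G$. Taking $G$-invariants is exact, and invariants of a $G$-equivariant vector bundle under the diagonal action of $G=\mu_m^r$ are locally free. Hence local freeness of $\widetilde V'/\widetilde F'^p$ and of the graded pieces descends.

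Your proof of (3) is right; it only uses that $\widetilde F^{p-1}$ is saturated. Once (1) is known, (2) follows at each boundary point from pre-admissibility along a curve through that point.

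The gap is Step 2, which is the entire content of the proposition, and the mechanism you indicate would not work as stated. Since $\psi$ takes values in a compact flag variety, ``$\psi$ is bounded'' is vacuous. To use Riemann extension you must show that near the point $\psi$ stays in one fixed affine chart with bounded coordinates, and curve-wise information cannot give this.

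For example, $\psi(t_1,t_2)=[t_1:t_2]$ on $\Delta^*\times\Delta$ extends holomorphically along every disc $\gamma$ with $\gamma(\Delta^*)\subset\Delta^*\times\Delta$. Yet it does not extend at the origin, where the saturated subsheaf $\mathcal{O}\cdot(t_1,t_2)\subset\mathcal{O}^2$ is not a subbundle. This already happens with $E=\{t_1=0\}$ smooth. So working at generic points of strata and then extending over codimension two cannot succeed by formal reasons.

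The relative monodromy filtrations do not close the gap by themselves either: for integral $a_i>0$ they come for free from the curves $t\mapsto(t^{a_1},\dots,t^{a_r},c)$. In the pure case the uniform control comes from Schmid's distance-decreasing (curvature) argument. That argument has no analogue for the extension data of a mixed variation, which is why extendability of $F$ is a hypothesis even in one variable.

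What is missing is a genuinely several-variable argument. It must show that, near a point of a deep stratum, local sections of the extended Hodge bundles of $\mathrm{Gr}^{\widetilde W}_m$ lift to sections of $\widetilde W_m$ that lie in $F^p$ off $E$. This is exactly the step you hand to Kashiwara. The paper relies on the same black box, so your outline is no less complete than its citation, but it is not an independent proof.
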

\subsection{The associated logarithmic Higgs bundle}\label{section_lHB_BM}
Let $\overline{S}$ be a complex manifold and $E \subset \overline{S}$ a reduced simple normal crossing divisor. Set $S := \overline{S} \setminus E$. Let $\mathbb{V} = (\mathbb{V}_\mathbb{R}, V, \nabla, W_\bullet, F^\bullet)$ be an admissible variation of mixed Hodge structure on $S$ with respect to the compactification $(\overline{S}, E)$.  

Denote by $(\widetilde{V}, \nabla)$ (resp. $(\widetilde{W}_\bullet, \nabla)$) the lower canonical extension of $(V, \nabla)$ (resp. $(W_\bullet, \nabla)$) to $\overline{S}$. Let $\widetilde{F}^\bullet$ be the unique extension of $F^\bullet$ to a locally free filtration on $\widetilde{V}$ satisfying Griffiths transversality, as guaranteed by Proposition \ref{prop_Kashiwara_extendHodge}. That is,  
\begin{align}\label{align_Griff_tran_Kashiwara_extension}  
	\nabla(\widetilde{F}^p) \subset \widetilde{F}^{p-1} \otimes_{\mathcal{O}_{\overline{S}}} \Omega_{\overline{S}}(\log E), \quad \forall p.
\end{align}  
For each integer $p$, define the associated graded sheaf  
\[
\widetilde{H}^p := \mathrm{Gr}_{\widetilde{F}}^p(\widetilde{V}) = \widetilde{F}^p / \widetilde{F}^{p+1}.
\]  
Then (\ref{align_Griff_tran_Kashiwara_extension}) implies that $\nabla$ induces a $\mathcal{O}_{\overline{S}}$-linear morphism  
\begin{align*}
	\theta: \widetilde{H}^p \to \widetilde{H}^{p-1} \otimes_{\mathcal{O}_{\overline{S}}} \Omega_{\overline{S}}(\log E), \quad \forall p.
\end{align*}  
Set $\widetilde{H} := \bigoplus_p \widetilde{H}^p$, and let $\theta: \widetilde{H} \to \widetilde{H} \otimes_{\mathcal{O}_{\overline{S}}} \Omega_{\overline{S}}(\log E)$ denote the direct sum of the component maps. The flatness of $\nabla$ (i.e., $\nabla^2 = 0$) implies $\theta^2 = 0$. Hence $(\widetilde{H}, \theta)$ is a logarithmic Higgs bundle on the log pair $(\overline{S}, E)$.  
\begin{defn}\label{defn_LHB_ass_VHSBM}  
	We call $(\widetilde{H}, \theta)$ the \emph{lower canonical logarithmic Higgs bundle} associated with the admissible variation of mixed Hodge structure $\mathbb{V}$.  
\end{defn}  
The logarithmic Higgs bundle $(\widetilde{H}, \theta)$ carries a natural weight filtration by Higgs subbundles, induced from $W_\bullet$. For each integer $m$, define  
\[
W_m(\widetilde{H}^p) := \widetilde{W}_m\big(\mathrm{Gr}_{\widetilde{F}}^p(\widetilde{V})\big),
\]  
and set  
\[
W_m(\widetilde{H}) := \bigoplus_{p \in \mathbb{Z}} W_m(\widetilde{H}^p).
\]  
Then $(W_m(\widetilde{H}), \theta)$ is a logarithmic Higgs subbundle of $(\widetilde{H}, \theta)$, i.e.,  
\[
\theta\big(W_m(\widetilde{H})\big) \subset W_m(\widetilde{H}) \otimes_{\mathcal{O}_{\overline{S}}} \Omega_{\overline{S}}(\log E).
\]  
Passing to graded pieces yields the logarithmic Higgs bundles  
\begin{align}\label{eq_grW_Higgs}  
	\theta: \mathrm{Gr}^W_m(\widetilde{H}) \to \mathrm{Gr}^W_m(\widetilde{H}) \otimes_{\mathcal{O}_{\overline{S}}} \Omega_{\overline{S}}(\log E),
\end{align}  
which coincide with the lower canonical logarithmic Higgs bundles associated with the variations of pure Hodge structure $\mathrm{Gr}^W_m(\mathbb{V})$.
\subsection{non-abelian Hodge theory on algebraic curve}\label{section_nonabelian_Hodge}
In this section, we recall Simpson's non-abelian Hodge theory \cite{Simpson1990}, which plays a foundational role in our proof of the Arakelov inequality.

Let $C$ be a smooth projective curve and $D = \{x_1,\dots,x_d\} \subset C$ a reduced divisor. Let $\mathbb{V} = (\mathcal{V}, \nabla, \mathcal{F}^\bullet, Q)$ be an $\mathbb{R}$-polarized variation of Hodge structure of weight $w$ on $C \setminus D$. Denote its associated graded Higgs bundle by  
\[
(H := \mathrm{Gr}_{\mathcal{F}^\bullet}\mathcal{V},\ \theta := \mathrm{Gr}_{\mathcal{F}^\bullet}\nabla).
\]  
Under Simpson’s correspondence \cite{Simpson1988}, $(H,\theta)$ is the Higgs bundle associated with $(\mathcal{V},\nabla)$. The Hodge metric $h_Q$ induced by the polarization $Q$ is a harmonic metric on $(H,\theta)$, and the triple $(H,\theta,h_Q)$ forms a tame harmonic bundle in the sense of Simpson \cite{Simpson1990}. Moreover, $(H,\theta)$ is a system of Hodge bundles \cite[\S 4]{Simpson1992}: it admits a decomposition  
\begin{align*}
	H = \bigoplus_{p+q=w} H^{p,q}, \quad 
	H^{p,q} \simeq \mathcal{F}^p / \mathcal{F}^{p+1}, \quad 
	\theta(H^{p,q}) \subset H^{p-1,q+1} \otimes \Omega_{C \setminus D}.
\end{align*}
Let $D_1 = \sum_{i=1}^d a_i \{x_i\}$ and $D_2 = \sum_{i=1}^d b_i \{x_i\}$ be $\mathbb{R}$-divisors supported on $D$. We write $D_1 < D_2$ (resp. $D_1 \leq D_2$) if $a_i < b_i$ (resp. $a_i \leq b_i$) for all $i = 1,\dots,d$.
\begin{defn}\label{defn_prolongation}
	Let $A = \sum_{i=1}^d a_i \{x_i\}$ be an $\mathbb{R}$-divisor on $C$, and let $U \subset C$ be a nonempty open subset. For any holomorphic section $s \in \Gamma(U \setminus D,\, H)$, we write $(s) \leq -A$ if, for every $i = 1,\dots,d$ and for any local holomorphic coordinate $z_i$ centered at $x_i$ (i.e., $x_i = \{z_i = 0\}$), the pointwise norm $|s|_{h_Q}$ satisfies  
	\[
	|s|_{h_Q} = O\big(|z_i|^{-a_i - \varepsilon}\big) \quad \text{as } z_i \to 0,
	\]  
	for all $\varepsilon > 0$.  
	
	The $\mathcal{O}_C$-module ${}_A H$ is then defined by specifying, for each open $U \subset C$, its sections as  
	\[
	\Gamma(U,\, {}_A H) := \big\{ s \in \Gamma(U \setminus D,\, H) \mid (s) \leq -A \big\}.
	\]
\end{defn}
Let
\begin{align*}
	{_{<A}}H:=\bigcup_{{B}<{A}}{_{B}}H\quad\textrm{and}\quad{\rm Gr}_{A}H:={_{A}}H/{_{<A}}H.
\end{align*}	
According to Simpson \cite[Theorem 3, Theorem 5]{Simpson1990}, the set of prolongations forms a parabolic structure.
\begin{thm}\label{thm_parabolic}
	For each $\bR$-divisor $A$ supported on $D$, $_{A}H$ is a locally free coherent sheaf satisfying the following conditions:
	\begin{itemize}
		\item  $_{A+\epsilon D_i}H = {_A}H$ for any $i=1,\dots,l$ and any constant $0<\epsilon\ll 1$,
		\item $_{A+D_i}H={_A}H\otimes \sO(-D_i)$ for every $1\leq i\leq l$,
		\item the subset of $(a_1,\dots,a_l)\in\bR^l$ such that ${\rm Gr}_{\sum_{i=1}^l a_iD_i}H\neq 0$ is discrete, and
		\item the Higgs field $\theta$ has at most logarithmic poles along $D$, that is, $\theta$ extends to 
		\begin{align*}
			{_{A}}H\to{_{A}}H\otimes\Omega_{C}(\log D).
		\end{align*}
	\end{itemize} 
    Moreover, the parabolic Higgs bundle $(H,\theta,\{{_A}H\})$ is semistable with trivial parabolic degree.
\end{thm}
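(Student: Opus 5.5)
This theorem is Simpson's (\cite[Theorems 3 and 5]{Simpson1990}), so the plan follows his argument. It combines asymptotic estimates for the tame harmonic metric $h_Q$ near each puncture with the Chern--Weil formula for the parabolic degree.

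\textbf{Step 1: reduce to a punctured disc.} All four bulleted assertions are local near each $x_i$. Fix a coordinate $z$ centred at $x_i$ on a disc $\Delta$ with $\Delta\cap D=\{x_i\}$. After a finite base change $w^m=z$ the local monodromy becomes unipotent, and norms of sections change in a controlled way: a section of order $|z|^{-a}$ has order $|w|^{-ma}$. So we may first treat the unipotent case and then descend by taking invariants under the cyclic group.

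\textbf{Step 2: the unipotent case via the nilpotent orbit theorem.} Here we use Schmid's $SL_2$-orbit and norm estimates. Flat multivalued sections $v$ in the weight-$k$ part of the monodromy weight filtration of $N$ satisfy
\[
|v|_{h_Q}^2\sim |\log|z||^{k}.
\]
The Deligne extension frame $\tilde v=\exp(-\tfrac{\log z}{2\pi i}N)v$ therefore has norms bounded by a power of $|\log|z||$. This is $O(|z|^{-\varepsilon})$ for every $\varepsilon>0$, but not $O(|z|^{\varepsilon})$ unless the section vanishes. Consequently ${}_aH$ is the Deligne extension twisted by $\mathcal{O}(\lfloor a\rfloor x_i)$, up to the sign convention of Definition~\ref{defn_prolongation}.

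In the general quasi-unipotent case, an eigenvalue $e^{2\pi i\alpha}$ with $\alpha\in[0,1)$ contributes the factor $|z|^{\alpha}$. The jumps therefore occur only at $-\alpha+\bZ$, a discrete set. This gives local freeness, the left-continuity ${}_{A+\epsilon D_i}H={}_AH$ for small $\epsilon>0$, and the shift relation ${}_{A+D_i}H={}_AH\otimes\mathcal{O}(-D_i)$. The last one holds because multiplying by $z$ shifts every norm exponent by exactly one.

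\textbf{Step 3: logarithmic poles of $\theta$.} The Higgs field $\theta$ is the graded piece of $\nabla$. On Deligne extensions, $\nabla$ has logarithmic poles with residue $-\alpha-\frac{N}{2\pi i}$. Since the Hodge filtration extends to subbundles (Schmid's nilpotent orbit theorem, or Proposition~\ref{prop_Kashiwara_extendHodge} in the pure case), $\theta$ maps ${}_AH$ into ${}_AH\otimes\Omega_C(\log D)$. Alternatively, one can argue metrically: $|\theta|_{h_Q}=O\big(|z|^{-1}|\log|z||^{-1}\big)$, which is the Poincaré-metric bound.

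\textbf{Step 4: semistability with trivial parabolic degree (the hard part).} We need two facts:
\[
\operatorname{par}\deg H=0,\qquad \operatorname{par}\deg G\le 0\ \text{ for every $\theta$-invariant saturated subsheaf } G.
\]
Write the Chern--Weil formula
\[
\deg G=\frac{i}{2\pi}\int_{C\setminus D}\operatorname{tr}\bigl(\pi R_h\bigr)-\int|\bar\partial\pi|^2,
\]
where $\pi$ is the orthogonal projection to $G$ and $R_h$ is the curvature of the Chern connection of $h_Q$ on $H$. The harmonic-bundle equation gives $R_h=-[\theta,\theta^*_h]$. When $\theta(G)\subset G\otimes\Omega_C(\log D)$, the curvature term equals $-\|(1-\pi)\theta\pi\|^2\le0$ (the second fundamental form of $G$ with respect to $\theta$). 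Hence $\deg\le0$.

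The main obstacle is justifying this integration over the noncompact curve $C\setminus D$ and identifying the boundary contributions with the parabolic weights. For this we cut out discs of radius $\delta$ around the punctures and compute $\int_{|z|=\delta} d\log\det(h|_G)$. By Step 2, $\det h|_G\sim |z|^{2\sum\alpha}\,|\log|z||^{c}$ in an adapted frame. The logarithmic factor contributes nothing as $\delta\to0$, while the power $|z|^{2\sum\alpha}$ contributes exactly the parabolic correction $\sum\alpha$. Integrability of the curvature terms follows from the Poincaré-type bound on $\theta$. Applied to $G=H$ itself, the curvature integrand $\operatorname{tr}[\theta,\theta^*]$ vanishes identically, so $\operatorname{par}\deg H=0$. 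Applied to an arbitrary $\theta$-invariant subsheaf, the same computation gives $\operatorname{par}\deg G\le 0$, which is semistability.
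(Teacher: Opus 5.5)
You are following the source the paper cites; the paper gives no argument beyond the reference to \cite{Simpson1990}. The overall plan is the right one: local analysis of the prolongations, then a Chern--Weil computation using the harmonic-bundle equation. Simpson's local analysis works for arbitrary tame harmonic bundles via acceptability of the harmonic metric. Your use of Schmid's norm estimates is specific to variations of Hodge structure, which is all that is needed here.

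\textbf{Main error: the shift relation has the wrong sign.} With Definition~\ref{defn_prolongation}, ${}_AH$ \emph{increases} with $A$. Your Step~2 formula ${}_aH=\widetilde V(\lfloor a\rfloor x_i)$ is correct for that convention. But then multiplication by $z_i$ maps ${}_AH$ isomorphically onto ${}_{A-D_i}H$. So your argument actually proves ${}_{A+D_i}H={}_AH\otimes\mathcal{O}(D_i)$, not ${}_AH\otimes\mathcal{O}(-D_i)$. The printed bullet is Simpson's relation for his decreasing convention, in which the prolongations are $\{s : |s|=O(|z|^{\alpha-\varepsilon})\}$, carried over to the increasing convention of Definition~\ref{defn_prolongation}. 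As stated, it is false. You should flag the inconsistency and prove the corrected sign, rather than assert the printed one and justify it with a sentence that yields the opposite. Alternatively, switch to Simpson's decreasing convention throughout; that also changes the first bullet and the definition of ${}_{<A}H$.

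\textbf{Two smaller corrections.}

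\emph{Step~4, the curvature term.} For a $\theta$-invariant $G$ one has $(1-\pi)\theta\pi=0$, so the term you wrote vanishes identically. The correct nonpositive term is $-\|\pi\theta(1-\pi)\|^2$, the part of $\theta$ sending $G^\perp$ into $G$ (equivalently, the part of $\theta^*_h$ sending $G$ into $G^\perp$). The conclusion $\deg\le 0$ is unaffected.

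\emph{Step~2, flat bundle versus Higgs bundle.} Schmid's estimates, as you use them, describe moderate-growth sections of the flat bundle $V$. But ${}_AH$ concerns $H=\mathrm{Gr}_FV$, with its quotient holomorphic structure and quotient norm. The upper bound on lifts only gives $\mathrm{Gr}_{\widetilde F}\widetilde V\subseteq{}_0H$. For equality, and hence for local freeness and the description of the jump set, you need a two-sided estimate: the Hodge metric on a suitable frame of $\mathrm{Gr}_{\widetilde F}\widetilde V$ must be mutually bounded with the model metric $\sum_j|z|^{2\alpha_j}|\log|z||^{k_j}$. This is part of Schmid's theory. It is also exactly what your boundary computation needs when you assert $\det(h|_G)\sim|z|^{2\sum\alpha}|\log|z||^{c}$.
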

\subsection{Hodge theoretic Arakelov inequality}\label{section_Hodge_Arakelov}
Let $S$ be a smooth projective variety, and let $D \subset E \subset S$ be closed algebraic subsets. Let $\mathbb{V} = (\mathbb{V}_\mathbb{R}, V, \nabla, W_\bullet, F^\bullet)$ be an $\mathbb{R}$-polarized graded variation of mixed Hodge structure on $S \setminus E$, admissible with respect to the compactification $(S, E)$. Denote by $w := \max\{p \mid F^p \neq 0\}$ the maximal Hodge index, and define $l(\mathbb{V}) := \min\{p \mid F^{w-p} = V\}$.

We impose the following assumption.
\begin{assumption}\label{assumption_star}
	There exists a closed algebraic subset $Z \subset S$ satisfying $\mathrm{codim}_S(Z) \geq 2$, such that:  
	\begin{enumerate}
		\item[(i)] Both $D \setminus Z$ and $E \setminus Z$ are reduced simple normal crossing divisors on $S \setminus Z$;  
		\item[(ii)] The lower canonical logarithmic Higgs bundle $(\widetilde{H} = \bigoplus_{p \in \mathbb{Z}} \widetilde{H}^p,\, \theta)$ associated with $\mathbb{V}$ (Definition \ref{defn_LHB_ass_VHSBM}) is defined on the log pair $(S \setminus Z,\, E \setminus Z)$;  
		\item[(iii)] There exists a torsion-free rank-one coherent sheaf $L$ on $S$ and an injective morphism $L|_{S \setminus Z} \hookrightarrow \widetilde{H}^w$.  
	\end{enumerate}  
	Moreover, the Higgs subsheaf of $(\widetilde{H}, \theta)$ generated by $L^0 := L|_{S \setminus Z}$ satisfies  
	\[
	\theta(L^p) \subset L^{p+1} \otimes_{\mathcal{O}_{S \setminus Z}} \Omega_{S \setminus Z}(\log (D \setminus Z)), \quad \forall p \geq 0,
	\]  
	where $L^p \subset \widetilde{H}^{w-p}$ for all $p \geq 0$.
\end{assumption}
The main result of this section is the following Arakelov-type inequality.
\begin{thm}\label{thm_Hodge_Arakelov}
	Let $d = \dim S$. Assume Assumption \ref{assumption_star} holds. If $K_S + D$ is pseudo-effective, then for every movable curve class $\alpha \in N_1(S)$ and every integer $n \geq l(\mathbb{V})$, the inequality  
	\begin{align}\label{align_Araineq_abs1}
		c_1(L) \cdot \alpha \leq \frac{1}{n + 1}\sum_{k=1}^{n} k d^{k-1} \, (K_S + D) \cdot \alpha
	\end{align}  
	holds.
	If moreover $(S,D)$ is log smooth and $K_S+D$ is ample, then the inequality  
	\begin{align*}
		c_1(L) \cdot(K_S+D)^{d-1} \leq \frac{n}{d}(K_S+D)^{d}
	\end{align*}  
	holds.
\end{thm}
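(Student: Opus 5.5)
The argument follows the Viehweg–Zuo/Campana–P\u{a}un pattern: iterate the Higgs field on the line $L$ to obtain a chain of sheaf maps into powers of $\Omega_S(\log D)$, and combine Griffiths negativity of the kernel of $\theta$ with generic semipositivity of $\Omega_S(\log D)$ along movable classes.

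\emph{Step 1 (the chain).} Set $L^0=L|_{S\setminus Z}$. By Assumption \ref{assumption_star}, $\theta$ induces
\[
\theta_p\colon L^p\to L^{p+1}\otimes\Omega_{S\setminus Z}(\log(D\setminus Z)).
\]
Composing gives
\[
\theta^{(p)}\colon L^0\to L^p\otimes \mathrm{Sym}^p\Omega(\log D),
\]
using $\theta\wedge\theta=0$ to land in symmetric powers. Let $m\le l(\mathbb V)\le n$ be maximal with $\theta^{(m)}\ne0$. Its image factors through $N^m\otimes\mathrm{Sym}^m\Omega$, where $N^m:=L^m$ lies in $\ker\theta$. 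For each $p$ let $\mathcal K_p$ be the saturated kernel of $\theta_p$. All of this extends across $Z$ as reflexive sheaves, since $\operatorname{codim} Z\ge2$ and intersection numbers with movable classes ignore codimension-two sets.

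\emph{Step 2 (negativity of kernels).} Any subsheaf $N\subset\ker(\theta)$ of $\widetilde H$ satisfies $\mu_\alpha$-maximal slope $\le0$, or more precisely $\deg_\alpha N\le 0$. For this I would use Theorem \ref{thm_parabolic}. A movable class is represented, after Mehta–Ramanathan/BDPP, by complete-intersection curves $C$ meeting $E$ transversally and avoiding $Z$. On such a curve, $\mathrm{Gr}^W$ of the restricted VMHS gives parabolic semistable Higgs bundles of parabolic degree $0$. A $\theta$-invariant sub such as $N$ therefore has parabolic degree $\le0$, and the lower canonical extension has degree $\le$ parabolic degree because its weights lie in $[0,1)$. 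The weight filtration reduces the mixed case to the pure graded pieces, since degrees add and $\ker\theta$ maps to the graded pieces compatibly. I expect this step to be the main obstacle: making the restriction to curves compatible with the lower canonical extension and the mixed filtration, and producing curves for a general movable class rather than only complete intersections. If the direct approach fails, I would work with the hypothesis as stated in the theorem and approximate movable classes by pushforwards of complete intersections on modifications.

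\emph{Step 3 (combining).} Generic semipositivity (Campana–P\u{a}un) says that for pseudo-effective $K_S+D$ and movable $\alpha$, every quotient of $\Omega_S(\log D)^{\otimes p}$ has $\alpha$-degree $\ge0$. Hence every subsheaf of $\mathrm{Sym}^p\Omega(\log D)$ of rank $r$ has $\alpha$-slope at most $p\,\mu_{\alpha,\max}\le p\,(K_S+D)\cdot\alpha$. A rank-$d$ bundle whose quotients are all nonnegative has maximal slope $\le$ its degree. The injection $L^{p}\to L^{p+1}\otimes\Omega$ then gives the slope inequalities
\[
\deg_\alpha L^p\le \deg_\alpha L^{p+1}+\mathrm{rk}(L^{p+1})\,\mu_{\max}(\Omega)
\]
with ranks bounded by $d^{p}$. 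Telescoping from $p=0$ to $m$ and using $\deg_\alpha L^m\le0$ from Step 2 bounds the contributions of the intermediate $L^k$ by $k d^{k-1}(K_S+D)\cdot\alpha$. Averaging the $n+1$ resulting estimates, with $L^0$ a line bundle, yields \eqref{align_Araineq_abs1}.

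\emph{Step 4 (ample case).} When $K_S+D$ is ample and $(S,D)$ is log smooth, I would instead use that $\Omega_S(\log D)$ is $(K_S+D)$-semistable by the Kähler–Einstein metric (Kobayashi/Tsuji/Guenancia). Then $\mu_{\max}(\mathrm{Sym}^p\Omega)=p\,(K_S+D)^d/d$. Repeating the telescoping with these sharper slopes, and using $m\le n$, gives $c_1(L)\cdot(K_S+D)^{d-1}\le \frac nd (K_S+D)^d$.
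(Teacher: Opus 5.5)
\textbf{Verdict: there is a genuine gap in Step 3, and Step 4 inherits it.}

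\textbf{The telescoping estimate is not justified.} Three things go wrong.
\begin{enumerate}
\item The map $\theta_p\colon L^p\to L^{p+1}\otimes\Omega_S(\log D)$ need not be injective for $p\ge 1$. $L^p$ can contain a nonzero $\theta$-kernel while $L^{p+1}\neq 0$; this is exactly why you introduced $\mathcal K_p$.
\item Even where it is injective, $L^p\hookrightarrow L^{p+1}\otimes\Omega_S(\log D)$ only gives $\mu_\alpha(L^p)\le\mu_{\alpha,\max}(L^{p+1})+\mu_{\alpha,\max}(\Omega_S(\log D))$. Once $\operatorname{rk}L^{p+1}>1$, $\mu_{\alpha,\max}(L^{p+1})$ is not controlled by $\deg_\alpha L^{p+1}$. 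So the displayed inequality $\deg_\alpha L^p\le\deg_\alpha L^{p+1}+\operatorname{rk}(L^{p+1})\mu_{\max}(\Omega)$ has no proof.
\item Even granting it, telescoping down to $\deg_\alpha L^m\le 0$ gives only $c_1(L)\cdot\alpha\le\sum_{j=1}^m d^j\,(K_S+D)\cdot\alpha$. The factor $\frac1{n+1}$ and the weights $kd^{k-1}$ cannot come from the single estimate $\deg_\alpha L^m\le0$. ``Averaging the $n+1$ estimates'' needs a bound on $\sum_k\deg_\alpha L^k$, which you never state.
\end{enumerate}
Your Step 2, which you flagged as the main obstacle, is essentially fine. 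Your filtration by $N\cap W_\bullet$ is a legitimate replacement for the paper's choice of the graded piece $\mathrm{Gr}^W_{w_0}$ in which $L$ first appears. But you apply it only to $\ker\theta$.

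\textbf{What is missing.} Compare each $L^p$ directly with the rank-one sheaf $L=L^0$, not with $L^{p+1}$. Then use negativity of the whole Higgs sheaf $\mathcal L:=\bigoplus_{p=0}^m L^p$ generated by $L$, not just its kernel part.
\begin{itemize}
\item Since $\mathcal L$ is generated by $L$ with logarithmic poles along $D$, there are surjections $L\otimes T_S(-\log D)^{\otimes p}\twoheadrightarrow L^p$.
\item Dualizing gives an injection $L\otimes(L^p)^\vee\hookrightarrow\Omega_S^{\otimes p}(\log D)$.
\item Campana--P\u{a}un applied to the quotient gives
\[
\operatorname{rk}(L^p)\,c_1(L)\cdot\alpha-c_1(L^p)\cdot\alpha\le p\,d^{p-1}(K_S+D)\cdot\alpha,\qquad p=1,\dots,m .
\]
\item Summing over $p$ yields $\operatorname{rk}(\mathcal L)\,c_1(L)\cdot\alpha-c_1(\mathcal L)\cdot\alpha\le\sum_{p=1}^m pd^{p-1}(K_S+D)\cdot\alpha$.
\item $\mathcal L$ is $\theta$-invariant, so your parabolic argument gives $c_1(\mathcal L)\cdot\alpha\le 0$.
\item Together with $\operatorname{rk}\mathcal L\ge m+1$, $m\le n$, and the monotonicity of $\frac1{m+1}\sum_{p=1}^m pd^{p-1}$ in $m$, this is exactly the stated bound.
\end{itemize}
\textbf{Ample case.} The same comparison, via semistability of $\Omega_S^{\otimes p}(\log D)$, gives $\mu(L)-\mu(L^p)\le p\,\mu(\Omega_S(\log D))$. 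Multiply by $\operatorname{rk}L^p$ and sum. Then use $c_1(\mathcal L)\cdot(K_S+D)^{d-1}\le0$ and $\sum_p p\operatorname{rk}L^p\le n\operatorname{rk}\mathcal L$ to get $c_1(L)\cdot(K_S+D)^{d-1}\le\frac nd(K_S+D)^d$. ``Repeating the telescoping'' does not reach this.
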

\begin{proof}
	We first treat the case when $K_S+D$ is pseudo-effective.
	
	Step 1: Reduction to the simple normal crossing setting and integral ample intersection.  
	We reduce the proof to the case where $D$ and $E$ are reduced simple normal crossing divisors and the movable class $\alpha \in N_1(S)$ is represented by the intersection of $d-1$ ample Cartier divisor classes.
	
	Since $\alpha$ is movable, there exists a projective birational morphism $\mu \colon S' \to S$, with $S'$ a smooth projective variety, and ample $\mathbb{R}$-divisor classes $\alpha_1, \dots, \alpha_{d-1} \in N^1(S')_\mathbb{R}$, such that  
	\[
	\alpha = \mu_*(\alpha_1 \cdots \alpha_{d-1}).
	\]  
	By resolving the non-snc loci of $D$ and $E$ via successive blow-ups along smooth centers, we may assume that $\mu^{-1}(D)$ and $\mu^{-1}(E)$ are simple normal crossing divisors on $S'$; in particular, the exceptional set $Z$ in Assumption \ref{assumption_star} becomes empty. The pullback $\mu^*\mathbb{V}$ remains admissible with respect to $(S', \mu^{-1}(E))$, and there exists an ideal sheaf $I \subset \mathcal{O}_{S'}$, supported in codimension at least two outside the $\mu$-exceptional locus, such that the natural map  
	\[
	\mu^*(L^{\vee\vee}) \otimes I \hookrightarrow \widetilde{\mu^* H}^{\,w}
	\]  
	is injective. Consequently, the tuple $(S', \mu^{-1}(D), \mu^{-1}(E), \mu^*\mathbb{V}, \mu^*(L^{\vee\vee}) \otimes I)$ satisfies Assumption \ref{assumption_star}.
	
	Moreover, by the projection formula, we have  
	\[
	c_1(L) \cdot \alpha = c_1\big(\mu^*(L^{\vee\vee}) \otimes I\big) \cdot \alpha_1 \cdots \alpha_{d-1}
	\]  
	and  
	\[
	(K_S + D) \cdot \alpha = (K_{S'} + \mu^{-1}(D)) \cdot \alpha_1 \cdots \alpha_{d-1}.
	\]  
	Hence, it suffices to prove the inequality on $S'$.
	
	Therefore, without loss of generality, we may assume:  
	(i) $D$ and $E$ are reduced simple normal crossing divisors (so the subset $Z$ in Assumption \ref{assumption_star} is empty);  
	(ii) $\alpha = \alpha_1 \cdots \alpha_{d-1}$, where each $\alpha_i \in N^1(S)_\mathbb{R}$ is the class of an ample $\mathbb{R}$-divisor;  
	(iii) by linearity of both sides of (\ref{align_Araineq_abs1}) in $\alpha$, we may further assume each $\alpha_i = [A_i]$ for some ample Cartier divisors $A_1, \dots, A_{d-1}$ on $S$.
    
    \emph{Step 2:} 
	Let $w_0$ be the largest integer such that $L \subset W_{w_0}(\widetilde{H}^w)$ but $L \nsubseteq W_{w_0-1}(\widetilde{H}^w)$. Then the natural inclusion $L \hookrightarrow \mathrm{Gr}^W_{w_0}(\widetilde{H}^w)$ is nonzero. Denote by $\big(\bigoplus_{p \geq 0} L^p_{w_0},\, \theta\big)$ the logarithmic Higgs subsheaf of $\big(\mathrm{Gr}^W_{w_0}(\widetilde{H}),\, \theta\big)$ generated by $L^0_{w_0} := L$, where $L^p_{w_0} \subset \mathrm{Gr}^W_{w_0}\big(\widetilde{H}^{w-p}\big)$ for each $p \geq 0$. The logarithmic Higgs field  
	\[
	\theta \colon L^p_{w_0} \to L^{p+1}_{w_0} \otimes_{\mathcal{O}_S} \Omega_S(\log E)
	\]  
	is holomorphic on $S \setminus D$ for all $p \geq 0$.
	
	Consider the induced complex:  
	\begin{align*}  
		L = L^0_{w_0} \xrightarrow{\,\theta\,} L^1_{w_0} \otimes_{\mathcal{O}_S} \Omega_S(\log D) \xrightarrow{\,\theta \otimes \mathrm{id}\,} L^2_{w_0} \otimes_{\mathcal{O}_S} \Omega_S^{\otimes 2}(\log D) \to \cdots.
	\end{align*}  
	Let $m_0 := \max\{p \mid L^p_{w_0} \neq 0\}$. Since $\bigoplus_{p=0}^{m_0} L^p_{w_0}$ is generated by $L$, there exist natural surjective morphisms  
	\begin{align*}
		L \otimes_{\mathcal{O}_S} T_S(-\log D)^{\otimes p} \twoheadrightarrow L^p_{w_0}, \quad p = 0, \dots, m_0.
	\end{align*}  
	Dualizing and taking reflexive hulls yields injective morphisms in codimension one:  
	\begin{align}\label{align_gammap}
		\gamma^p \colon L^{\vee\vee} \otimes_{\mathcal{O}_S} (L^p_{w_0})^\vee \hookrightarrow \Omega_S^{\otimes p}(\log D), \quad p = 0, \dots, m_0.
	\end{align}  	
	Since $\omega_S(D)$ is pseudo-effective, it follows from \cite[Theorem 1.2]{CP2019} that the first Chern class of the quotient sheaf $\Omega_S^{\otimes p}(\log D)/\operatorname{Im}(\gamma^p)$ is pseudo-effective for each $p = 1, \dots, m_0$. Consequently, for all such $p$, we have  
	\begin{align*}
		\big(p d^{p-1} c_1(K_S + D) - \operatorname{rank}(L^p_{w_0})\, c_1(L) + c_1(L^p_{w_0})\big) \cdot \alpha \geq 0.
	\end{align*}  
	Summing over $p = 1$ to $m_0$, and noting that $L^0_{w_0} \simeq L$, yields  
	\begin{align}\label{align_inequality1}
		\left( \sum_{p=1}^{m_0} p d^{p-1}\, c_1(K_S + D) - \operatorname{rank}(\mathcal{L})\, c_1(L) \right) \cdot \alpha \geq -c_1(\mathcal{L}) \cdot \alpha,
	\end{align}
	where $\mathcal{L} := \bigoplus_{p=0}^{m_0} L^p_{w_0}$.
	
	Since $\mathcal{L}$ is torsion-free, there exists a dense Zariski open subset $U \subset S$, with $\operatorname{codim}_S(S \setminus U) \geq 2$, on which $\mathcal{L}$ is locally free. Without loss of generality, we may assume the ample Cartier divisors $A_1, \dots, A_{d-1}$ are chosen in general position so that their complete intersection $C := A_1 \cap \cdots \cap A_{d-1}$ is a smooth, connected curve satisfying:  
	(i) $C \subset U$;  
	(ii) $C$ intersects each irreducible component of $E$ transversally.  
	It follows that $\mathcal{L}|_C$ is a logarithmic Higgs subsheaf of $\widetilde{H}_{w_0}|_C$, and moreover,  
	\[
	\widetilde{H}_{w_0}|_C \simeq \widetilde{H_{w_0}|_{C \setminus E}},
	\]  
	where the right-hand side denotes the lower canonical logarithmic Higgs bundle associated with the polarized variation of pure Hodge structure $\mathrm{Gr}^W_{w_0} \mathbb{V}|_{C \setminus E}$.
	
	Let $C \cap E = \{x_1, \dots, x_l\}$. Associated with this variation is a naturally defined parabolic Higgs bundle  
	\[
	\big(\widetilde{H_{w_0}|_{C \setminus E}},\, \theta,\, \{{}_A H_C \mid A = \textstyle\sum_{i=1}^l a_i x_i\}\big),
	\]  
	as described in \S\ref{section_nonabelian_Hodge}. By Theorem \ref{thm_parabolic}, this parabolic Higgs bundle is semistable and has parabolic degree zero; furthermore, the zero-parabolic-weight component satisfies ${}_0 H_C \simeq \widetilde{H}_{w_0}|_C$. Since $\mathcal{L}|_C \subset {}_0 H_C$, it inherits the trivial parabolic structure and thus forms a Higgs subsheaf of $(\widetilde{H_{w_0}|_{C \setminus E}}, \theta)$. Semistability and vanishing parabolic degree therefore imply  
	\begin{align}\label{align_inequality2}
		c_1(\mathcal{L}) \cdot \alpha = c_1(\mathcal{L}|_C) \leq 0.
	\end{align}
	
	Observe that $m_0 \leq l(\mathbb{V}) \leq n$, and since $\mathcal{L}$ is torsion-free and each $L^p_{w_0}$ is nonzero for $p = 0, \dots, m_0$, we have $\operatorname{rank}(\mathcal{L}) \geq m_0 + 1$. Moreover, $c_1(K_S + D) \cdot \alpha \geq 0$ by pseudo-effectivity of $\omega_S(D)$ and movability of $\alpha$.
	Combining inequalities (\ref{align_inequality2}) and (\ref{align_inequality1}), and using the elementary inequality  
	\[
	\frac{\sum_{k=1}^{m_0} k d^{k-1}}{\operatorname{rank}(\mathcal{L})} \leq \frac{\sum_{k=1}^{m_0} k d^{k-1}}{m_0 + 1} \leq \frac{\sum_{k=1}^{n} k d^{k-1}}{n + 1},
	\]  
	we obtain  
	\begin{align*}
		\left( \frac{\sum_{k=1}^{n} k d^{k-1}}{n + 1}\, c_1(K_S + D) - c_1(L) \right) \cdot \alpha 
		&\geq \left( \frac{\sum_{k=1}^{m_0} k d^{k-1}}{\operatorname{rank}(\mathcal{L})}\, c_1(K_S + D) - c_1(L) \right) \cdot \alpha \\
		&\geq -\frac{c_1(\mathcal{L}) \cdot \alpha}{\operatorname{rank}(\mathcal{L})} \geq 0.
	\end{align*}  
	This concludes the proof of Theorem \ref{thm_Hodge_Arakelov} under the assumption that $K_S + D$ is pseudo-effective.
	
	The case in which $K_S + D$ is ample follows analogously. In this setting, $S \setminus D$ admits a K\"ahler-Einstein metric with Poincar\'e-type growth along $D$. This induces a Hermitian-Einstein metric on $\Omega_S(\log D)$, thereby ensuring that $\Omega_S(\log D)^{\otimes k}$ is semistable with respect to $K_S + D$ for all integers $k \geq 1$. Applying this stability property to (\ref{align_gammap}), we obtain the inequalities  
	\begin{align*}  
		\mu(L) - \mu(L_{w_0}^p) \leq p\,\mu(\Omega_S(\log D)), \quad p = 0, \dots, m_0,  
	\end{align*}  
	where $\mu(F)$ denotes the slope of a torsion free coherent sheaf $F$ with respect to $K_S + D$:
	\[
	\mu(F) = \frac{c_1(F) \cdot (K_S + D)^{d-1}}{\operatorname{rank}(F)}.  
	\]  
	
	Summing both sides of the inequality over $p = 1$ to $m_0$ yields  
	\begin{align*}  
		\left( \sum_{p=0}^{m_0} \frac{p}{d}\operatorname{rank}(L_{w_0}^p)\,(K_S + D) - {\rm rank}(\mathcal{L})c_1(L) \right) \cdot (K_S + D)^{d-1} \geq -c_1(\mathcal{L}) \cdot (K_S + D)^{d-1} \geq 0.  
	\end{align*}  
	Since $$\sum_{p=1}^{m_0} p\operatorname{rank}(L_{w_0}^p)\leq m_0{\rm rank}(\mathcal{L})\leq n{\rm rank}(\mathcal{L}),$$ it follows that  
	\begin{align*}  
		\left( \frac{n}{d}(K_S + D) - c_1(L) \right) \cdot (K_S + D)^{d-1} \geq 0.  
	\end{align*}  
	This concludes the proof of Theorem \ref{thm_Hodge_Arakelov} under the assumption that $K_S + D$ is ample.
\end{proof}
\subsection{Higgs sheaf associated with a family}
Motivated by the foundational works of Viehweg-Zuo \cite{VZ2001,VZ2003}, the author of \cite{stjc2025} constructs a Higgs sheaf satisfying Assumption \ref{assumption_star} from a family of simple normal crossing pairs. This construction yields a logarithmic Higgs subsheaf on the base whose positivity properties imply hyperbolicity for the admissible substack of $\overline{\mathcal{M}}_{\mathrm{slc}}(d, \Phi_c, \Gamma, \sigma)$. It also plays a central role in our proof of Theorem \ref{thm_main_numbound_moduli}.
\begin{defn}[Simple normal crossing family]\label{defn_SNC_family}
	Let $S$ and $X$ be reduced schemes of finite type over $\operatorname{Spec}(\mathbb{C})$, and let $D \geq 0$ be a Weil $\bQ$-divisor on $X$. Let $f: X \to S$ be a morphism. The morphism $f: (X, D) \to S$ is said to be \emph{simple normal crossing over $S$} at a point $x \in X$ if there exists a Zariski open neighborhood $U$ of $x$ in $X$ that can be embedded into a scheme $Y$, which is smooth over $S$. In this embedding, $Y$ admits a regular system of parameters $(z_1, \dots, z_p, y_1, \dots, y_r)$ over $S$ at the point corresponding to $x = 0$, such that $U$ is defined by the monomial equation $z_1 \cdots z_p = 0$ and  
	\[ D|_U = \sum_{i=1}^r a_i (y_i = 0)|_U, \quad \text{where } a_i \geq 0, \]  
	over $S$.
	
	The morphism $f: (X, D) \to S$ is said to be a \emph{simple normal crossing family over $S$} if it satisfies the simple normal crossing condition over $S$ at every point of $X$. 
	
	In the special case where $S = \operatorname{Spec}(\mathbb{C})$, the pair $(X, D)$ is referred to as a \emph{simple normal crossing pair} if $(X, D)$ forms a simple normal crossing family over $\operatorname{Spec}(\mathbb{C})$. In this context, $X$ has Gorenstein singularities and possesses an invertible dualizing sheaf $\omega_X$. The canonical divisor $K_X$ is defined up to linear equivalence via the isomorphism $\omega_X \simeq \sO_X(K_X)$.
\end{defn}
We fix a projective morphism $f \colon X \to S$ of quasi-projective reduced schemes over $\mathbb{C}$ of relative dimension $n = \dim X - \dim S$, where $S$ is smooth and $X$ is a simple normal crossing scheme. Let $\Delta$ be a $\mathbb{Q}$-divisor on $X$ with coefficients in $[0,1]$, such that $(X,\Delta)$ is a simple normal crossing pair. Assume there exists a reduced simple normal crossing divisor $D \subset S$ satisfying:  
(i) the restriction $f^o := f|_{X^o} \colon (X^o, \Delta^o) \to S^o$ is a simple normal crossing family, where $S^o := S \setminus D$, $X^o := f^{-1}(S^o)$, and $\Delta^o := \Delta|_{X^o}$;  
(ii) $\mathrm{supp}(\Delta)$ contains no irreducible component of $f^{-1}(D)$.  

Let $L$ be a torsion free sheaf of rank 1 on $S$, and suppose there exists a nonzero morphism  
\begin{align*}
	s_L \colon L^{\otimes k} \to f_*\big(\mathcal{O}_X(kK_{X/S} + k\Delta)\big)  
\end{align*}  
for some integer $k \geq 1$ such that $k\Delta$ is integral.
\begin{thm}[\cite{stjc2025}, Theorem 4.1]\label{thm_VZ_construction}  
	Let the notation be as above. Then there exist:  
	\begin{enumerate}  
		\item[(i)] a closed algebraic subset $E \subset S$ containing $D$, and a closed algebraic subset $Z \subset S$ satisfying $\operatorname{codim}_S(Z) \geq 2$, such that both $D \setminus Z$ and $E \setminus Z$ are reduced simple normal crossing divisors on $S \setminus Z$;  
		\item[(ii)] an $\mathbb{R}$-polarized variation of mixed Hodge structure $\mathbb{V}$ on $S \setminus E$, admissible with respect to the compactification $(S, E)$, with $l(\mathbb{V}) \leq n$;  
		\item[(iii)] the lower canonical logarithmic Higgs bundle $(\widetilde{H} = \bigoplus_{p \in \mathbb{Z}} \widetilde{H}^p,\, \theta)$, defined on the log pair $(S \setminus Z,\, E \setminus Z)$ and associated with $\mathbb{V}$;  
		\item[(iv)] an injective morphism  
		\[
		L(-D)|_{S \setminus Z} \hookrightarrow \widetilde{H}^w,
		\]  
		where $L(-D) := L \otimes_{\mathcal{O}_S} \mathcal{O}_S(-D)$.  
	\end{enumerate}  
	Moreover, the Higgs subsheaf $\bigoplus_{p \geq 0} L^p \subset \bigoplus_{p \geq 0} \widetilde{H}^{w-p}$ generated by $L^0 := L(-D)|_{S \setminus Z}$ has logarithmic poles along $D$:  
	\[
	\theta(L^p) \subset L^{p+1} \otimes_{\mathcal{O}_{S \setminus Z}} \Omega_{S \setminus Z}^1(\log(D \setminus Z)), \quad \forall\, p \geq 0.
	\]
\end{thm}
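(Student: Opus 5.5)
The plan follows the Viehweg--Zuo strategy of \cite{VZ2001,VZ2003}, adapted to the mixed and simple normal crossing setting. We pass to a fibre product where $L$ becomes a section of a relative pluricanonical sheaf, take a cyclic cover branched along that section, and then use the Gauss--Manin system of the cover together with the Kashiwara extension (Proposition \ref{prop_Kashiwara_extendHodge}).

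\emph{Step 1: a section and a cyclic cover.} Since $\mathcal{O}_X(kK_{X/S}+k\Delta)$ is invertible, the nonzero map $s_L$ gives a nonzero section
\[
\sigma\in H^0\big(X,\ \mathcal{O}_X(k(K_{X/S}+\Delta))\otimes f^*L^{-k}\big)
\]
over the locus where $L$ is locally free. We shrink $S$ away from a codimension-two subset $Z$ (on which $L$ fails to be invertible, or the relevant direct images fail to be flat) and use that the target sheaf is reflexive. Let $\mathcal{M}:=\omega_{X/S}(\Delta)\otimes f^*L^{-1}$, so that $\sigma$ is a section of $\mathcal{M}^{\otimes k}$. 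Take the $k$-th root cover $\pi\colon W\to X$ branched along the divisor $\Gamma:=(\sigma)$, combined with the fractional boundary $\Delta$. Then take a log resolution, functorial enough to remain simple normal crossing over a dense open set of $S$, chosen so that $\Gamma+\Delta+f^{-1}(D)$ becomes simple normal crossing. Let $E\supset D$ be the closed subset of $S$ over which this total boundary fails to be relatively snc. After further enlarging $Z$, $E\setminus Z$ is an snc divisor. This gives (i).

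\emph{Step 2: the variation and the Higgs bundle.} Over $S\setminus E$, consider the relative log cohomology $\mathbb{V}:=R^n g_*$ of the open part of the resolved cover, with respect to the exceptional and boundary divisors, and extract the $\mathcal{M}^{-1}$-eigenspace under the Galois action. By Deligne, $\mathbb{V}$ is a graded-polarizable VMHS, and by Steenbrink--Zucker and Kashiwara it is admissible along $E$, since it is of geometric origin. The Hodge filtration has length at most $n+1$, so $l(\mathbb{V})\le n$. This gives (ii). Proposition \ref{prop_Kashiwara_extendHodge} then supplies $(\widetilde H,\theta)$ on $(S\setminus Z,E\setminus Z)$, which is (iii). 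By the logarithmic comparison theorem, the graded pieces are
\[
\widetilde H^{w-p}=R^pg_*\big(\Omega^{n-p}_{W/S}(\log)\otimes\mathcal{M}^{-1}\text{-part}\big),
\]
and $\theta$ is the cup product with the Kodaira--Spencer class of the log extension $0\to g^*\Omega_S(\log E)\to\Omega_W(\log)\to\Omega_{W/S}(\log)\to0$.

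\emph{Step 3: the map from $L(-D)$ and the pole condition.} On the snc family we also have the Kodaira--Spencer complex with $\log D$ only, built from $0\to f^*\Omega_S(\log D)\to\Omega_X(\log(\Delta+f^{-1}D))\to\Omega_{X/S}(\log)\to0$. Its graded pieces form a Higgs sheaf $F^{p}=R^pf_*(\Omega^{n-p}_{X/S}(\log)\otimes\mathcal{M}^{-1})$ whose Higgs field has poles only along $D$. The first term is
\[
F^0=f_*(\omega_{X/S}(\Delta)\otimes\mathcal{M}^{-1})\simeq L ,
\]
and comparing $\omega_{X/S}$ with its log version along $f^{-1}(D)$ introduces the twist by $\mathcal{O}(-D)$. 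The section $\sigma$ induces a morphism of Higgs sheaves $(F,\tau)\to(\widetilde H,\theta)$ from the inclusion $\mathcal{M}^{-1}\to\pi_*\mathcal{O}_W$-eigensheaf, realized after pulling back and using $\mathcal{O}(-\Gamma)$-twisted forms; this is the Viehweg--Zuo comparison. Injectivity on $L(-D)$ follows because the degree-zero part is a map of rank-one sheaves that is generically an isomorphism. The image of $\bigoplus_p F^p$ is a Higgs subsheaf containing the one generated by $L^0$, and since $\tau$ has only $\log D$ poles, $\theta(L^p)\subset L^{p+1}\otimes\Omega(\log D)$. This gives (iv) and the final assertion.

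\emph{Main obstacle.} The hard step is Step 3 in the snc, non-normal setting. One has to show that the comparison map is compatible with the two different log structures, $\log D$ on the source and $\log E$ on the target, and that it lands in the \emph{lower canonical} extension. The latter needs the geometric characterization of the canonical extension of admissible VMHS via log de Rham complexes on a semistable-type model, after a base change making local monodromies unipotent and then descending. I would first try to establish the comparison on a semistable reduction, where everything is explicit, and then descend via invariants, tracking the $\mathcal{O}(-D)$ twist that this descent costs.
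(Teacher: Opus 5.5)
\paragraph{Comparison with the paper.} The paper does not prove this statement. It imports it from \cite{stjc2025}, and says only (in the introduction and in the subsection where it is stated) that it comes from a Viehweg--Zuo-type construction combined with a geometric characterization of the lower canonical extension of admissible variations of mixed Hodge structure of geometric origin. Your outline is a construction of that kind, so the strategy is the expected one. As written, however, it does not prove the statement, because the one non-formal step is assumed rather than proved.

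\paragraph{The gap.} Items (i)--(iii) are standard or formal. Item (i) holds for any closed $E\supset D$ once the codimension-$\geq 2$ part of $E$ and the singular locus of its divisorial part are put into $Z$. Item (ii) is admissibility of geometric variations. The bound $l(\mathbb{V})\le n$ is the length of the Hodge filtration on $H^n$. The real content is (iv) together with the pole condition. Let $j\colon S\setminus E\hookrightarrow S$. The Viehweg--Zuo map from $F=\bigoplus_pF^p$ to $j_*H$ is evident only over $S\setminus E$; what must be shown is that it lands in the \emph{lower canonical} extension $\widetilde H$ near the generic point of every component of $E$. Your Step 2 identification $\widetilde H^{w-p}=R^pg_*(\Omega^{n-p}_{W/S}(\log)\cdots)$, and your Kodaira--Spencer description of $\theta$ along $E$, are this very claim. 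They are Steenbrink-type facts for a smooth total space with snc fibres over the generic points of $E$. Here, by contrast, $W$ is a non-normal snc scheme, $g$ is arbitrary over $E$, and $\mathbb{V}$ is mixed. Supplying this is exactly what \cite{stjc2025} is cited for, and it is what your ``main obstacle'' paragraph leaves open. Once the map exists, Higgs compatibility, and hence the $\log D$ pole condition, follow from the picture over $S\setminus E$ by torsion-freeness. This requires forming the generated subsheaf by contracting with $T_S(-\log D)$, not $T_S(-\log E)$.

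\paragraph{Closing it along your route.} Since $\widetilde H$ is locally free on $S\setminus Z$, homomorphisms into it extend across codimension two, so it suffices to work near a general point of each component of $E$. There:
\begin{enumerate}
\item a cyclic base change $t^m=z$ makes the monodromy unipotent;
\item one passes to a model of the cover with reduced snc fibres;
\item one descends via $\widetilde V=(\tau_*\widetilde V')^{G}$ and $\widetilde F^p=j_*F^p\cap\widetilde V$; taking invariants is exact, so it commutes with $\mathrm{Gr}_F$.
\end{enumerate}
The point to get right is that your section must pull back to the resolved base change. Over $D$ this fails for $\omega_{X/S}$: a fibre of multiplicity $m$ already shows it, and Lemma \ref{lem_Viehweg_inclusion} is an inclusion in the opposite direction. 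So the root line bundle has to be built from the relative \emph{logarithmic} canonical sheaf, which is compatible with such base change. The inclusion $\omega_{X/S}\otimes f^*\mathcal{O}_S(-D)\subset\Omega^n_{X/S}(\log)$ is precisely where $L(-D)$ comes from. It is not an after-the-fact comparison as in your Step 3.

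\paragraph{Smaller points.}
\begin{enumerate}
\item $\mathcal{M}=\omega_{X/S}(\Delta)\otimes f^*L^{-1}$ is not a line bundle when $\Delta$ is fractional. Use $\lceil\Delta\rceil$ and add $k(\lceil\Delta\rceil-\Delta)$ to the branch divisor.
\item On a reducible snc $X$, the section $\sigma$ may vanish identically on some components, so $(\sigma)$ need not be a divisor. Restrict to the components where $\sigma\not\equiv0$, adding the double locus to the boundary.
\item A single eigenspace of the Galois action is only a $\mathbb{C}$-variation, not the $\mathbb{R}$-polarized one demanded in (ii). Use all of $R^ng_*\mathbb{R}$, or a sum over conjugate characters.
\item $\widetilde H^w$ is not of rank one, so injectivity has to come from non-vanishing. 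At a general point $s$, the generator maps to the $k$-th root of $\sigma_s$, which is a nonzero logarithmic $n$-form on the fibre of the cover.
\end{enumerate}
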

Combined with Theorem \ref{thm_Hodge_Arakelov}, we obtain:
\begin{thm}\label{thm_VZ_Arakelov}  
	Let the notation be as above. Let $d = \dim S$ and $n = \dim X - \dim S$. If $K_S + D$ is pseudo-effective, then the following Arakelov-type inequality holds:  
	\begin{align*}
		(c_1(L)-D) \cdot \alpha \leq \frac{1}{n + 1}\sum_{p=1}^{n}pd^{p-1}(K_S + D) \cdot \alpha
	\end{align*}  
	for every movable curve class $\alpha \in N_1(S)$.	
	If moreover $K_S+D$ is ample, then the inequality  
	\begin{align*}
		(c_1(L)-D) \cdot(K_S+D)^{d-1} \leq \frac{n}{d}(K_S+D)^{d}
	\end{align*}  
	holds.
\end{thm}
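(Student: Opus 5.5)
Theorem \ref{thm_VZ_Arakelov} follows by feeding the output of Theorem \ref{thm_VZ_construction} into Theorem \ref{thm_Hodge_Arakelov}. Concretely, Theorem \ref{thm_VZ_construction} gives the following data:
\begin{itemize}
\item closed subsets $D\subset E\subset S$ and $Z\subset S$ with $\operatorname{codim}_S Z\geq 2$, such that $D\setminus Z$ and $E\setminus Z$ are reduced snc divisors on $S\setminus Z$;
\item an admissible $\mathbb{R}$-polarized graded-polarizable variation of mixed Hodge structure $\mathbb{V}$ on $S\setminus E$ with $l(\mathbb{V})\leq n$;
\item an injection $L(-D)|_{S\setminus Z}\hookrightarrow \widetilde{H}^w$ whose generated Higgs subsheaf $\bigoplus_p L^p$ has logarithmic poles only along $D$.
\end{itemize}
The sheaf $L(-D)=L\otimes\mathcal{O}_S(-D)$ is torsion-free of rank one on $S$, since $\mathcal{O}_S(-D)$ is invertible. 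Hence the tuple $(S,D,E,Z,\mathbb{V},L(-D))$ satisfies items (i)–(iii) of Assumption \ref{assumption_star} together with the final compatibility condition on $\theta(L^p)$.

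Applying Theorem \ref{thm_Hodge_Arakelov} with the integer $n=\dim X-\dim S$ is legitimate because $n\geq l(\mathbb{V})$. In the pseudo-effective case this gives, for every movable class $\alpha$,
\[
c_1(L(-D))\cdot\alpha\leq \frac{1}{n+1}\sum_{p=1}^{n}p\,d^{p-1}(K_S+D)\cdot\alpha .
\]
Since $c_1(L(-D))=c_1(L)-D$ (first Chern classes of rank-one torsion-free sheaves are defined via the reflexive hull, and tensoring by a line bundle is additive on $c_1$), this is exactly the first assertion.

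For the ample case, the second part of Theorem \ref{thm_Hodge_Arakelov} gives
\[
(c_1(L)-D)\cdot(K_S+D)^{d-1}\leq \frac{n}{d}(K_S+D)^d .
\]
That part is stated under the hypothesis that $(S,D)$ is log smooth. In our setting $D$ is assumed to be a reduced snc divisor on the smooth variety $S$, so this hypothesis holds. The set $E$, which may fail to be snc on $Z$, enters only through Assumption \ref{assumption_star}, so it causes no problem.

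The only step requiring care is matching the hypotheses exactly. In particular, Theorem \ref{thm_VZ_construction} must supply the graded polarization and admissibility required by Theorem \ref{thm_Hodge_Arakelov}, and it does so by its item (ii). Beyond this check, the argument is a direct substitution.
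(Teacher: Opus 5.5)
Your proposal is correct and is exactly the paper's argument: the paper obtains Theorem \ref{thm_VZ_Arakelov} in one line by feeding the output of Theorem \ref{thm_VZ_construction} into Theorem \ref{thm_Hodge_Arakelov}. There the rank-one sheaf is $L(-D)$ and the integer is $n=\dim X-\dim S\geq l(\mathbb{V})$, and one uses $c_1(L(-D))=c_1(L)-D$. The hypotheses you check — that $n\geq l(\mathbb{V})$ is admissible, and that $(S,D)$ is log smooth for the ample case — are the only ones that need verifying, and you verify them correctly.
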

	\section{Boundedness of polarized algebraic families}\label{section_boundedness}
	\subsection{Stable minimal models and their moduli}\label{section_moduli}
	We now review the main results from \cite{Birkar2022} that will be utilized in the subsequent sections. A \emph{stable minimal model} is a triple $(X, B), A$, where $X$ is a reduced, connected, projective scheme of finite type over ${\rm Spec}(\bC)$, and $A, B \geq 0$ are $\bQ$-divisors satisfying the following conditions:  
	\begin{itemize}  
		\item $(X, B)$ is a projective, connected slc (semi-log-canonical) pair,  
		\item $K_X + B$ is semi-ample,  
		\item $K_X + B + tA$ is ample for some $t > 0$, and  
		\item $(X, B + tA)$ is slc for some $t > 0$.  
	\end{itemize}  
	Let  
	$$
	d \in \bN, \, c \in \bQ^{\geq 0}, \, \Gamma \subset \bQ^{>0} \text{ a finite set, and } \sigma \in \bQ[t].
	$$  
	A $(d, \Phi_c, \Gamma, \sigma)$-stable minimal model is a stable minimal model $(X, B), A$ satisfying the following conditions:  
	\begin{itemize}  
		\item $\dim X = d$,  
		\item the coefficients of $A$ and $B$ belong to $c \bZ^{\geq 0}$,  
		\item ${\rm vol}(A|_F) \in \Gamma$, where $F$ is any general fiber of the fibration $f: X \to Z$ determined by $K_X + B$, and  
		\item ${\rm vol}(K_X + B + tA) = \sigma(t)$ for $0 \leq t \ll 1$.  
	\end{itemize}  
	Let $S$ be a reduced scheme over ${\rm Spec}(\bC)$. A family of $(d, \Phi_c, \Gamma, \sigma)$-stable minimal models over $S$ consists of a projective morphism $X \to S$ of schemes and $\bQ$-divisors $A$ and $B$ on $X$, satisfying the following conditions:  
	\begin{itemize}  
		\item $(X, B + tA) \to S$ is a locally stable family (i.e., $K_{X/S} + B + tA$ is $\bQ$-Cartier) for every sufficiently small rational number $t \geq 0$,  
		\item $A = cN$, $B = cD$, where $N, D \geq 0$ are relative Mumford divisors, and  
		\item $(X_s, B_s), A_s$ is a $(d, \Phi_c, \Gamma, \sigma)$-stable minimal model for each point $s \in S$.  
	\end{itemize} 
	Let ${\rm Sch}_{\bC}^{\rm red}$ denote the category of reduced schemes defined over ${\rm Spec}(\bC)$. Define  
	$$
	\sM^{\rm red}_{\rm slc}(d, \Phi_c, \Gamma, \sigma): S \mapsto \{\text{families of } (d, \Phi_c, \Gamma, \sigma)\text{-stable minimal models over } S\},
	$$  
	a functor of groupoids over ${\rm Sch}_{\bC}^{\rm red}$.  
	
	\begin{thm}[Birkar \cite{Birkar2022}] \label{thm_moduli_stable_var}  
		There exists a proper Deligne-Mumford stack $\sM_{\rm slc}(d, \Phi_c, \Gamma, \sigma)$ over ${\rm Spec}(\bC)$ such that the following properties hold:  
		\begin{itemize}  
			\item $\sM_{\rm slc}(d, \Phi_c, \Gamma, \sigma)|_{{\rm Sch}_{\bC}^{\rm red}} = \sM^{\rm red}_{\rm slc}(d, \Phi_c, \Gamma, \sigma)$ as functors of groupoids.  
			\item $\sM_{\rm slc}(d, \Phi_c, \Gamma, \sigma)$ admits a projective good coarse moduli space $M_{\rm slc}(d, \Phi_c, \Gamma, \sigma)$.  
		\end{itemize}  
	\end{thm}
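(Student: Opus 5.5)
This is a cited result, and I would not reprove it from scratch. Instead I would reconstruct Birkar's argument in three stages: \emph{boundedness}, \emph{representability and properness} of the moduli functor, and \emph{projectivity} of the coarse space.

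\emph{Stage 1: boundedness.} First I would fix the numerical data $(d,\Phi_c,\Gamma,\sigma)$ and show that the set of $(d,\Phi_c,\Gamma,\sigma)$-stable minimal models is bounded. Concretely, I would aim for an integer $r$ and a rational $0<a\ll 1$, depending only on the data, such that for every such $(X,B),A$ the divisor $r(K_X+B+aA)$ is very ample without higher cohomology. Its Hilbert polynomial should be among finitely many possibilities. This is the deep input. It combines:
\begin{itemize}
\item boundedness of the base of the Iitaka fibration $X\to Z$ given by $K_X+B$, via the canonical bundle formula and ACC/DCC results for log canonical thresholds;
\item boundedness of the general fibres, which are log Calabi--Yau with $\mathrm{vol}(A|_F)\in\Gamma$;
\item Birkar's theory of boundedness of complements and of polarized log Calabi--Yau fibrations.
\end{itemize}
I expect this to be the main obstacle. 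I would try to deduce it from Birkar's results on stable Calabi--Yau fibrations, using the fixed volume polynomial $\sigma$ to bound $\mathrm{vol}(K_X+B+tA)$ uniformly.

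\emph{Stage 2: representability, separatedness and properness.} With $r,a$ fixed, I would embed every object into a fixed $\mathbb{P}^N$ using $|r(K_X+B+aA)|$. Then I would cut out the locus in a Hilbert scheme, or rather in Koll\'ar's space of families of pairs with relative Mumford divisors, where the conditions hold:
\begin{itemize}
\item the family is locally stable for all small $t$;
\item the coefficients of $A$ and $B$ lie in $c\mathbb{Z}^{\geq 0}$;
\item the fibres satisfy the volume conditions.
\end{itemize}
These are locally closed conditions on reduced bases by Koll\'ar's theory of families of stable pairs. The stack is then the quotient of this locus by $PGL_{N+1}$, and it is algebraic of finite type.

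Automorphism groups are finite because $K_X+B+aA$ is ample and $(X,B+aA)$ is slc, so the stack is Deligne--Mumford. Separatedness and properness I would check with the valuative criterion over a DVR:
\begin{itemize}
\item \emph{Existence of limits:} apply semistable reduction, then run a relative MMP producing the good minimal model of $K+B$ and the lc model of $K+B+aA$.
\item \emph{Uniqueness:} for both models, uniqueness follows from their being (log) canonical models.
\end{itemize}

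\emph{Stage 3: projectivity.} The Keel--Mori theorem gives a separated algebraic coarse space, proper because the stack is. For projectivity I would use Koll\'ar's ampleness lemma, or the Kov\'acs--Patakfalvi and Fujino theorem. Applied to the determinant $\lambda_{a,r}$ of $f_*\mathcal{O}(r(K_{X/S}+B+aA))$, it shows this line bundle descends (after a power) to an ample line bundle on the coarse space. The inputs are semipositivity of these pushforwards and a finite-ness of the classifying map to the quotient.
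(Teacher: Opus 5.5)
Your outline is correct and follows the same route as the paper. The paper simply invokes Birkar's Theorem 1.14 and identifies the stack as the quotient $[M^e_{\rm slc}(d,\Phi_c,\Gamma,\sigma,a,r,\mathbb{P}^n)/\mathrm{PGL}_{n+1}(\mathbb{C})]$ of the embedded moduli space defined via $|r(K_X+B+aA)|$, which is your Stage~2, and obtains projectivity, as in its proposition on $\lambda_{a,r}$, from Koll\'ar's ampleness criterion together with the Fujino/Kov\'acs--Patakfalvi nefness, which is your Stage~3. The one imprecision is in your separatedness step: good minimal models of $K+B$ are unique only up to flops, so uniqueness of limits should rest solely on uniqueness of the relative lc model of $(X,B+aA)$, which suffices because $a$ is uniform.
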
  
	\begin{proof}  
		See the proof of \cite[Theorem 1.14]{Birkar2022}. Using the notations in \cite[\S 10.7]{Birkar2022}, we have  
		$$
		\sM_{\rm slc}(d, \Phi_c, \Gamma, \sigma) = \left[M_{\rm slc}^e(d, \Phi_c, \Gamma, \sigma, a, r, \bP^n)/{\rm PGL}_{n+1}(\bC)\right],
		$$  
		where the right-hand side denotes the stacky quotient.  
	\end{proof}
	\subsection{Polarization on $M_{\rm slc}(d,\Phi_c,\Gamma,\sigma)$}\label{section_polarization_moduli}
	In this section, we consider certain natural ample $\bQ$-line bundles on $M_{\rm slc}(d, \Phi_c, \Gamma, \sigma)$. Their constructions are implicitly described in the proof of \cite[Theorem 1.14]{Birkar2022}, relying on Koll\'ar's ampleness criterion \cite{Kollar1990}.  
	
	Fix the data $d, \Phi_c, \Gamma, \sigma$. Since $\sM_{\rm slc}(d, \Phi_c, \Gamma, \sigma)$ is of finite type, there exist constants  
	$$
	(a, r, j) \in \bQ^{\geq 0} \times (\bZ^{>0})^2,
	$$  
	depending only on $d, \Phi_c, \Gamma, \sigma$, such that every $(d, \Phi_c, \Gamma, \sigma)$-stable minimal model $(X, B), A$ satisfies the following conditions (cf. \cite[Lemma 10.2]{Birkar2022}):  
	\begin{itemize}  
		\item $(X, B + aA)$ is an slc pair,  
		\item $r(K_X + B + aA)$ is a very ample integral Cartier divisor with  
		$$
		H^i(X, \sO_X(k r(K_X + B + aA))) = 0, \quad \forall i > 0, \forall k > 0,  
		$$  
		\item the embedding $X \hookrightarrow \bP(H^0(X, r(K_X + B + aA)))$ is defined by equations of degree $\leq j$, and  
		\item the multiplication map  
		$$
		S^j(H^0(X, \sO_X(r(K_X + B + aA)))) \to H^0(X, \sO_X(j r(K_X + B + aA)))
		$$  
		is surjective.  
	\end{itemize}
	\begin{defn}
		A tuple $(a, r, j) \in \bQ^{\geq 0} \times (\bZ^{>0})^2$ that satisfies the conditions above is referred to as a \emph{$(d, \Phi_c, \Gamma, \sigma)$-polarization datum}.
	\end{defn}
	Let $(a, r, j) \in \bQ^{\geq 0} \times (\bZ^{>0})^2$ be a $(d, \Phi_c, \Gamma, \sigma)$-polarization datum. Let $(X, B), A \to S$ be a family of $(d, \Phi_c, \Gamma, \sigma)$-stable minimal models. Then $f_\ast(r(K_{X/S} + B + aA))$ is locally free and commutes with arbitrary base changes. Therefore, the assignment  
	$$
	f: (X, B), A \to S \in \sM_{\rm slc}(d, \Phi_c, \Gamma, \sigma)(S) \mapsto f_\ast(r(K_{X/S} + B + aA))
	$$  
	defines a locally free coherent sheaf on the stack $\sM_{\rm slc}(d, \Phi_c, \Gamma, \sigma)$, denoted by $\Lambda_{a,r}$. Let $\lambda_{a,r} := \det(\Lambda_{a,r})$. Since $\sM_{\rm slc}(d, \Phi_c, \Gamma, \sigma)$ is Deligne-Mumford, some power $\lambda_{a,r}^{\otimes k}$ descends to a line bundle on $M_{\rm slc}(d, \Phi_c, \Gamma, \sigma)$. For this reason, we regard $\lambda_{a,r}$ as a $\bQ$-line bundle on $M_{\rm slc}(d, \Phi_c, \Gamma, \sigma)$.
	\begin{prop}\label{prop_ample_line_bundle_moduli}  
		Let $(a, r, j) \in \bQ^{\geq 0} \times (\bZ^{>0})^2$ be a $(d, \Phi_c, \Gamma, \sigma)$-polarization datum. Then $\lambda_{a,r}$ is ample on $M_{\rm slc}(d, \Phi_c, \Gamma, \sigma)$.  
	\end{prop}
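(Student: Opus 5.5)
The plan is to apply Kollár's ampleness criterion \cite{Kollar1990} to the moduli stack written as a quotient, following the argument implicit in the proof of \cite[Theorem 1.14]{Birkar2022}. By Theorem \ref{thm_moduli_stable_var}, $\sM_{\rm slc}(d,\Phi_c,\Gamma,\sigma)=[M^e/{\rm PGL}_{N+1}]$, where $M^e:=M_{\rm slc}^e(d,\Phi_c,\Gamma,\sigma,a,r,\bP^N)$ parametrizes embedded stable minimal models $X\subset\bP^N=\bP(H^0(X,r(K_X+B+aA)))$. Over $M^e$ there is a universal family $f\colon(\mathcal{X},\mathcal{B}),\mathcal{A}\to M^e$. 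The vector bundle $W:=f_*\mathcal{O}(r(K_{\mathcal{X}/M^e}+\mathcal{B}+a\mathcal{A}))$ has rank $l=N+1$; by the vanishing in the polarization datum it commutes with base change, and it is the pullback of $\Lambda_{a,r}$. I will work with $W$ and its quotient $Q:=f_*\mathcal{O}(jr(K_{\mathcal{X}/M^e}+\mathcal{B}+a\mathcal{A}))$, which is also locally free and commutes with base change.

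\emph{Step 1: weak positivity of $W$ and $Q$.} I will show that $W$ and $Q$ are nef, or at least weakly positive, on every proper base. This is the semipositivity theorem for stable families, due to Kovács–Patakfalvi \cite{Kovacs2017} and Fujino \cite{Fujino2018}. The key inputs are that $K_{\mathcal{X}/S}+\mathcal{B}+a\mathcal{A}$ is $f$-ample and that $(X_s,B_s+aA_s)$ is slc for every fiber. These two facts let one apply Fujino's semipositivity of $f_*\mathcal{O}(k(K_{X/S}+\Delta))$ for families of slc pairs with relatively ample log canonical divisor.

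\emph{Step 2: the classifying map.} The multiplication map $S^jW\to Q$ is surjective by the fourth condition of the polarization datum. Its kernel at a point $s$ is the space $I_{X_s}(j)$ of degree-$j$ equations of $X_s$. Since $X_s$ is cut out by equations of degree $\le j$, this kernel determines $X_s\subset\bP^N$, and it also determines the divisors $B_s$ and $A_s$ once the auxiliary Hilbert data are included, possibly after enlarging $j$ or adding the analogous bundles built from $\mathcal{A}$ and $\mathcal{B}$. Thus the classifying map to the Grassmannian of quotients of $S^j\bC^{N+1}$ is ${\rm PGL}$-equivariant with finite stabilizers and is injective on isomorphism classes, because the moduli stack is Deligne–Mumford and separated. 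Kollár's criterion then gives that $\det Q$ descends to an ample $\bQ$-line bundle on the coarse space $M_{\rm slc}$. That criterion requires: $S^jW$ is weakly positive (Step 1); the structure group of $W$ acts through ${\rm GL}_{N+1}$ with the classifying map being quasi-finite on orbits; and $Q$ is a quotient of $S^jW$.

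\emph{Step 3: from $\det Q$ to $\lambda_{a,r}=\det W$.} This is the main obstacle, because Kollár's criterion directly yields ampleness of $\det Q=\lambda_{a,jr}$, not of $\det W$. I would first try the approach of Kovács–Patakfalvi, replacing $(r,j)$ by the datum $(jr,\cdot)$ and using the fact that the same argument applies to every admissible $r$. For the given $r$, I would instead use the surjection $S^jW\twoheadrightarrow Q$ to write $\det Q$ as a quotient of the determinant of a tensor power of $W$, combined with the nefness of $W$. Concretely, ampleness on a proper stack can be tested by Nakai–Moishezon on subvarieties $T$ of the coarse space. For a family generically non-isotrivial over $T$, the Kollár argument shows $\det Q|_T$ is big. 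To transfer bigness to $\det W|_T$, I would show that a nef bundle $W$ with $\det W$ non-big forces the variation of the embedding $X_s\subset\bP(W_s)$ to be trivial along $T$: if $\det W$ were not big, $W$ would be numerically flat on a covering family of curves, hence $S^jW$ and its quotient $Q$ would be too, contradicting bigness of $\det Q$. Running this over every $T$, with bigness of $\det W|_T$ for all $T$ plus nefness, gives ampleness by the Nakai–Moishezon/Seshadri-type criterion for nef line bundles on the proper algebraic space $M_{\rm slc}$.
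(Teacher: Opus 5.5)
Your Steps 1--2 follow the paper's route. The paper's proof only invokes Koll\'ar's argument \cite{Kollar1990} to reduce ampleness of $\lambda_{a,r}$ to nefness of $f_\ast(r(K_{X/S}+B+aA))$ over smooth projective curves, which is supplied by \cite{Fujino2018,Kovacs2017}. The gap is in Step 3. You need that step, because the ampleness lemma as you set it up, with $W=f_\ast\mathcal{O}(r(K+B+aA))$ and the quotient $S^jW\twoheadrightarrow Q$, only yields ampleness of $\det Q=\lambda_{a,jr}$. The bridge you propose back to $\det W$ fails at both of its links.

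\emph{First link.} Quotients of numerically flat bundles are only nef, not numerically flat; already $\mathcal{O}_{\mathbb{P}^1}^{\oplus 2}\twoheadrightarrow\mathcal{O}_{\mathbb{P}^1}(1)$ shows this. More to the point, suppose $W|_C$ is trivial on a curve $C$ but the quotient $S^jW|_C\twoheadrightarrow Q|_C$ varies, i.e.\ the embedded fibres $X_s\subset\mathbb{P}(W_s)$ move. Then $\det Q|_C$ is the pull-back of the Pl\"ucker bundle under a non-constant map $C\to\mathrm{Gr}$, so it has positive degree. This is exactly the mechanism that makes Koll\'ar's lemma work. Hence $\deg\det W|_C=0<\deg\det Q|_C$ is no contradiction at the level of vector bundles, and ruling it out needs geometric input about the family that you do not supply.

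\emph{Second link.} When $\dim T\ge 2$, a nef line bundle that is not big need not have degree zero on any covering family of curves. For example, on $\mathbb{P}(E)$ with $E$ a suitable stable rank-two degree-zero bundle on a curve of genus $\ge 2$, the tautological bundle is nef with zero self-intersection but positive on every curve. So your reduction of bigness on $T$ to a statement about curves is also unjustified.

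As written, your argument proves at best that $\lambda_{a,jr}$ is ample, or $\lambda_{a,r'}$ for a suitable multiple $r'$ of $r$. It does not prove the statement for the given $r$. Your first alternative in Step 3 only reaches $\lambda_{a,r}$ when $r=j_0r_0$ for some other polarization datum $(a,r_0,j_0)$, which is not guaranteed. The paper attempts no such bridge: it simply asserts that Koll\'ar's argument applies directly to $\det f_\ast(r(K_{X/S}+B+aA))$. For the later boundedness results, re-choosing the datum would be harmless, but that is not what the proposition asserts.

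A secondary point concerns Step 2. Quasi-finiteness of the classifying map does not follow from the stack being Deligne--Mumford and separated. It requires that the Grassmannian data determine $(X_s,B_s),A_s$ up to finitely many choices. The quotient $S^jW\twoheadrightarrow Q$ records only $X_s\subset\mathbb{P}(W_s)$, so you need additional quotients, coming from nef bundles, that encode $A$ and $B$. You flagged this but left it open.
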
  	
	\begin{proof}  
		By the same arguments as in \cite[\S 2.9]{Kollar1990}. It suffices to show that $f_\ast(r(K_{X/S} + B + aA))$ is nef when $S$ is a smooth projective curve. This was established by Fujino \cite{Fujino2018} and Kov\'acs-Patakfalvi \cite{Kovacs2017}.  
	\end{proof}  
	\subsection{Fiber product of stable families}
	Following Koll\'ar \cite{Kollar2023}, a flat family of slc pairs $(X, \Delta) \to S$ over a smooth base $S$ is referred to as a \emph{locally stable family} if $K_{X/S} + \Delta$ is $\bQ$-Cartier.
	
	Let $S$ be a smooth variety, and let $f:(X, \Delta) \to S$ be a locally stable family of slc pairs. Denote by $X^{[r]}_S$ the $r$-fold fiber product $X \times_S X \times_S \cdots \times_S X$, and let $f^{[r]}: X^{[r]}_S \to S$ be the projection map. Define  
	$$
	\Delta^{[r]}_S := \sum_{i=1}^r p_i^\ast(\Delta),
	$$  
	where $p_i: X^{[r]}_S \to X$ is the projection onto the $i$-th component. Then $f^{[r]}:(X^{[r]}_S, \Delta^{[r]}_S) \to S$ is also a locally stable family of slc pairs (\cite[Corollary 4.3]{WeiWu2023}). According to \cite[Theorem 4.54]{Kollar2023}, $(X^{[r]}_S, \Delta^{[r]}_S)$ is an slc pair.
	
	Let $\tau: (X^{(r)}_S, \Delta^{(r)}_S) \to (X^{[r]}_S, \Delta^{[r]}_S)$ be a semi-log resolution (\cite{Bierstone2013} or \cite[\S 10.4]{Kollar2013}), where $\Delta^{(r)}_S$ is a simple normal crossing divisor determined by  
	\begin{align*}
		\tau^\ast(K_{X^{[r]}_S} + \Delta^{[r]}_S) = K_{X^{(r)}_S} + \Delta^{(r)}_S - E,  
	\end{align*}  
	with $E$ being an effective $\tau$-exceptional divisor that shares no common components with $\Delta^{(r)}_S$. 
	\begin{lem}(\cite[Lemma 5.4]{stjc2025})\label{lem_mild_pushforward}
		Let $k \geq 1$ such that $kK_{X/S} + k\Delta$ is integral. Then the following statements hold:  
		\begin{enumerate}
			\item $\tau_{\ast}\left(\sO_{X^{(r)}_S}(kK_{X^{(r)}_S/S}+k\Delta^{(r)}_S)\right)\simeq\sO_{X^{[r]}_S}(kK_{X^{[r]}_S/S}+k\Delta^{[r]}_S)$.
			\item $f^{[r]}_\ast\left(\sO_{X^{[r]}_S}(kK_{X^{[r]}_S/S}+k\Delta^{[r]}_S)\right)$ is a reflexive sheaf.
			\item $f^{[r]}_\ast\left(\sO_{X^{[r]}_S}(kK_{X^{[r]}_S/S}+k\Delta^{[r]}_S)\right)\simeq (f_\ast\left(\sO_X(kK_{X/S}+k\Delta)\right)^{\otimes r})^{\vee\vee}$.
		\end{enumerate}
	\end{lem}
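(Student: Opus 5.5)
The strategy is to compare everything on the semi-log resolution $X^{(r)}_S$, where $K$ is a Cartier divisor, and then push forward. Both sides of all three statements are local on $S$, so we may shrink $S$ freely.

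\textbf{Statement (1).} Write
\[
kK_{X^{(r)}_S/S}+k\Delta^{(r)}_S=\tau^*\big(k(K_{X^{[r]}_S/S}+\Delta^{[r]}_S)\big)+kE .
\]
This uses that $\tau^*K_S$ cancels on both sides. If $K_{X^{[r]}_S/S}+\Delta^{[r]}_S$ is Cartier of index dividing $k$, the projection formula gives
\[
\tau_*\big(\mathcal{O}(\tau^*M+kE)\big)=\mathcal{O}(M)\otimes\tau_*\mathcal{O}(kE).
\]
Since $E$ is effective and $\tau$-exceptional, and $X^{[r]}_S$ is demi-normal ($S_2$ and normal crossing in codimension one), we have $\tau_*\mathcal{O}(kE)=\mathcal{O}$.

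In general $k(K+\Delta)$ is only a Weil divisorial sheaf. Here I would argue as follows.
\begin{itemize}
\item The pushforward $\tau_*\mathcal{O}(kK_{X^{(r)}}+k\Delta^{(r)})$ is torsion free.
\item It agrees with $\mathcal{O}(kK_{X^{[r]}}+k\Delta^{[r]})$ away from a codimension-two subset, namely the locus where $\tau$ is an isomorphism over the snc locus.
\item Any section of the right-hand side pulls back to a rational section of $kK_{X^{(r)}}+k\Delta^{(r)}$. Its poles lie only along exceptional divisors, with order bounded by the discrepancy. The slc condition (discrepancies $\ge -1$, coefficients of $\Delta^{(r)}$ of exceptional components being $1$) means $E$ is effective, so these poles are absorbed.
\end{itemize}
Hence the pushforward is $S_2$ and equals the reflexive sheaf. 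The key input is that $\tau$ is a semi-log resolution of an slc pair with $\Delta^{(r)}_S$ containing the reduced exceptional boundary.

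\textbf{Statement (2).} The fibers of $f^{[r]}$ are slc and hence $S_2$, and $\mathcal{O}_{X^{[r]}}(kK+k\Delta)$ is $S_2$ relative to $S$ because the family is locally stable (\cite[Corollary 4.3]{WeiWu2023}, \cite{Kollar2023}). So the pushforward $\mathcal{F}$ is torsion free. To show it is reflexive: if $U\subset S$ has complement of codimension $\ge2$, then $(f^{[r]})^{-1}(S\setminus U)$ has codimension $\ge2$ in $X^{[r]}_S$, since $f^{[r]}$ is flat with equidimensional fibers. Sections of an $S_2$ divisorial sheaf therefore extend, which gives $\Gamma(U,\mathcal{F})=\Gamma(S,\mathcal{F})$.

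\textbf{Statement (3).} Over the open set $S^\circ$ where $f_*\mathcal{O}_X(k(K_{X/S}+\Delta))$ is locally free and commutes with base change, use flatness of $f$. We have $\omega_{X^{[r]}/S}^{[k]}(k\Delta^{[r]})=\boxtimes_i p_i^*\omega_{X/S}^{[k]}(k\Delta)$, at least off codimension two on $X^{[r]}$, and then reflexively. The Künneth formula for flat proper morphisms then gives
\[
f^{[r]}_*\Big(\bigotimes_i p_i^*\mathcal{F}_i\Big)=\bigotimes_i f_*\mathcal{F}_i .
\]
The sheaf $f_*\mathcal{O}_X(kK_{X/S}+k\Delta)$ is reflexive by (2) with $r=1$, hence locally free off codimension two since $S$ is smooth. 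Thus $S^\circ$ can be taken with complement of codimension $\ge2$. Both sides of (3) are reflexive, and they agree on $S^\circ$, so they agree on $S$.

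\textbf{Main obstacle.} I expect the hardest step to be the Künneth and box-product identification on the singular, possibly non-$\mathbb{Q}$-Gorenstein-index-$k$ total space. Specifically, one must check that $p_i^*$ of the reflexive sheaf $\mathcal{O}(k(K+\Delta))$ stays reflexive on $X^{[r]}_S$, and that base change holds for it. I would handle this by restricting to the big open subset of $X$ where $k(K_{X/S}+\Delta)$ is Cartier. This subset includes the generic points of fiber components and fiberwise codimension-one points, by the locally stable hypothesis. On its preimage the identification is the honest projection formula plus Künneth. Extension then follows from the $S_2$ property established in (2).
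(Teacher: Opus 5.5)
The paper gives no proof of this lemma; it is quoted from \cite[Lemma 5.4]{stjc2025}, so there is no in-text argument to compare with. Your strategy is the standard one and is sound: discrepancy comparison on the semi-log resolution for (1), $S_2$ plus flatness of $f^{[r]}$ for (2), and ``both sides reflexive and equal on a big open set'' for (3). However, part (3), as you first wrote it, has a genuine defect.

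You take $S^\circ$ to be the locus where $f_*\mathcal{O}_X(k(K_{X/S}+\Delta))$ is locally free \emph{and commutes with base change}. You then conclude that its complement has codimension $\ge 2$ because the sheaf is reflexive. Reflexivity on the smooth base gives local freeness off codimension two, but no control over where cohomology and base change to the fibres holds. The ``Künneth formula for flat proper morphisms'' also needs the sheaves $\mathcal{F}_i$ to be flat over $S$. You have not established this for the reflexive sheaf over a base of dimension $\ge 2$, where torsion-freeness no longer suffices. Finally, you would be applying Künneth to the box product rather than to its reflexive hull, and the box product need not be reflexive.

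Your last paragraph contains the right repair; make it the proof and drop base change altogether.
\begin{itemize}
\item Let $U\subset X$ be the locus where $k(K_{X/S}+\Delta)$ is Cartier, with line bundle $\mathcal{L}$. Codimension-one points of the demi-normal $X$ are smooth or nodal, hence Gorenstein, and $\Delta$ is Cartier there, so $\operatorname{codim}_X(X\setminus U)\ge 2$.
\item Since the projections $p_i$ are flat, $X^{[r]}_S\setminus U^{[r]}$ also has codimension $\ge 2$; no fibrewise statement is needed.
\item On $U^{[r]}$ the divisorial sheaf is $\mathcal{L}^{\boxtimes r}$: the two agree on the Gorenstein locus and both are $S_2$.
\item By the $S_2$ property, $\mathcal{G}:=f_*\mathcal{O}_X(k(K_{X/S}+\Delta))=(f|_U)_*\mathcal{L}$ and $f^{[r]}_*\mathcal{O}_{X^{[r]}_S}(k(K_{X^{[r]}_S/S}+\Delta^{[r]}_S))=(f^{[r]}|_{U^{[r]}})_*\mathcal{L}^{\boxtimes r}$.
\item On the non-proper $U^{[r]}$, the tool is flat base change, $p_{1*}p_2^*\mathcal{L}\cong(f|_U)^*\mathcal{G}$, valid because $f|_U$ is flat, quasi-compact and separated. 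Combine this with the projection formula for $\mathcal{L}$ and for $\mathcal{G}|_{S^\circ}$, where $S^\circ$ is now just the big locally free locus of $\mathcal{G}$.
\item Induction on $r$ gives $\mathcal{G}^{\otimes r}$ on $S^\circ$, and reflexivity of both sides concludes.
\end{itemize}

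Two smaller corrections.

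In (1), $E\ge 0$ is part of the set-up, not a consequence of slc. Slc only puts the coefficients of $\Delta^{(r)}_S$ in $[0,1]$, and the identity does not use this; also, those coefficients need not equal $1$. When $k(K+\Delta)$ is not Cartier, $k\Delta^{(r)}_S$ can have fractional coefficients, so read the left side with $\lfloor k\Delta^{(r)}_S\rfloor$. Write $n_P,\delta_P,e_P$ for the coefficients at a prime $P$ of $\operatorname{div}(\tau^*\phi)+kK_{X^{(r)}_S}$, $\Delta^{(r)}_S$ and $E$. Then $n_P+k\delta_P\ge ke_P\ge 0$ with $n_P\in\mathbb{Z}$, hence $n_P+\lfloor k\delta_P\rfloor\ge 0$.

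In (2), you claim that $\mathcal{O}(k(K+\Delta))$ is ``$S_2$ relative to $S$''. This is a base-change property you neither prove nor use; $S_2$ on the total space plus the codimension count suffice.
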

	We will also need the following lemma (originated to Viehweg \cite{Viehweg1983}) in the sequel.
	\begin{lem}(\cite[Lemma 5.5]{stjc2025})\label{lem_Viehweg_inclusion}
		Let $g: Y \to S$ be a projective surjective morphism from a simple normal crossing variety $Y$ to a smooth variety $S$. Let $\tau: S' \to S$ be a flat projective surjective morphism from a smooth variety $S'$. Let $\Delta \geq 0$ be an integral Cartier divisor on $Y$. Consider the following commutative diagram:  
		\begin{align*}  
			\xymatrix{  
				Y \ar[d]^g & Y'' \ar[l]_{\rho'} \ar[d]^{g''} & Y' \ar[l]_{\rho} \ar[dl]^{g'} \\  
				S & S' \ar[l]_{\tau} &  
			},  
		\end{align*}  
		where $g'': Y'' \to S'$ is the base change of $g$, and $\rho: Y' \to Y''$ is a semi-log resolution of singularities. Then, for every $r \geq 1$, there exists a natural inclusion  
		$$  
		g'_\ast\left(\sO_{Y'}(rK_{Y'/S'} + (\rho'\rho)^\ast\Delta)\right) \subset \tau^\ast g_\ast\left(\sO_Y(rK_{Y/S} + \Delta)\right).  
		$$  
	\end{lem}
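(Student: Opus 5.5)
The plan is to reduce the statement to a local comparison of pluricanonical sheaves on $Y''$ and $Y'$, and then push forward using flat base change.

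\emph{Step 1 (flat base change).} Since $\tau$ is flat, flat base change gives
\[
\tau^\ast g_\ast\big(\mathcal{O}_Y(rK_{Y/S}+\Delta)\big)\simeq g''_\ast\rho'^\ast\big(\mathcal{O}_Y(rK_{Y/S}+\Delta)\big).
\]
The morphism $\tau$ is a flat morphism between smooth varieties, so its fibres are local complete intersections. Also $Y$ is Gorenstein, being simple normal crossing. Hence $Y''$ is Gorenstein, and in particular Cohen--Macaulay and $S_2$, with
\[
\omega_{Y''/S'}\simeq \rho'^\ast\omega_{Y/S}.
\]
By generic smoothness of $\tau$, the scheme $Y''$ is generically reduced, and being CM it is reduced. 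So it suffices to construct an inclusion
\[
\rho_\ast\mathcal{O}_{Y'}\big(rK_{Y'/S'}+\rho^\ast\Delta''\big)\subset \mathcal{O}_{Y''}\big(rK_{Y''/S'}+\Delta''\big), \qquad \Delta'':=\rho'^\ast\Delta,
\]
and then apply $g''_\ast$, using $g'=g''\circ\rho$. Because $K_{Y'/S'}=K_{Y'}-g'^\ast K_{S'}$ and $K_{Y''/S'}=K_{Y''}-g''^\ast K_{S'}$, the relative twists cancel by the projection formula. The comparison is therefore one between absolute canonical sheaves.

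\emph{Step 2 (discrepancy comparison).} Since $\omega_{Y''}$ is invertible, we can write
\[
K_{Y'}=\rho^\ast K_{Y''}+\sum_i a_iE_i .
\]
Here the sum runs over prime divisors of $Y'$, and all but the $\rho$-exceptional ones have coefficient determined at generic points where $\rho$ is an isomorphism. I will show that $a_i\le 0$ for every non-exceptional $E_i$. The tool is the Grothendieck trace $\rho_\ast\omega_{Y'}\to\omega_{Y''}$, which exists for a proper birational map onto a reduced CM scheme. It is injective, since it is generically an isomorphism and $\rho_\ast\omega_{Y'}$ is torsion free. Applying it at the generic point of a non-exceptional divisor gives $a_i\le0$ there.

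Write $\sum a_iE_i=P-N$ with $P,N\ge0$ and $P$ $\rho$-exceptional. Then
\[
\rho_\ast\mathcal{O}_{Y'}(rK_{Y'}+\rho^\ast\Delta'')\subset \rho_\ast\mathcal{O}_{Y'}\big(\rho^\ast(rK_{Y''}+\Delta'')+rP\big)=\mathcal{O}_{Y''}(rK_{Y''}+\Delta'')\otimes\rho_\ast\mathcal{O}_{Y'}(rP).
\]
Finally, $\rho_\ast\mathcal{O}_{Y'}(rP)=\mathcal{O}_{Y''}$. To see this, note that a section of the left side is regular on a big open subset of $Y''$, namely the complement of $\rho(\mathrm{Exc})$, which has codimension $\ge2$. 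It therefore extends by the $S_2$ property of $Y''$ and the invertibility of $\omega_{Y''}^{\otimes r}(\Delta'')$.

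\emph{Main obstacle.} The delicate point is Step 2 over codimension-one points of $S'$ where $\tau$ fails to be smooth. There $Y''$ need not be normal crossing in codimension one, so $\rho$ may modify $Y''$ along divisors, and one cannot simply argue that $\rho$ is an isomorphism off codimension two. I would handle this entirely through the trace map and the Gorenstein/$S_2$ property of $Y''$, as above. I would also double-check two things: that the semi-log resolution $\rho$ is birational on each irreducible component of $Y''$, and that the exceptional part $P$ pushes forward trivially, including on non-normal components. If the latter causes trouble, I would instead pass to the normalization of $Y''$ and use the conductor, which only shrinks the left-hand side.
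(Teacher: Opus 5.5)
\textbf{Gap in Step 2: the conductor is discarded.}

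Most of your outline is the right one. The paper gives no proof here and cites \cite[Lemma 5.5]{stjc2025}; the standard argument is Viehweg's: flat base change, $Y''$ Gorenstein with $\omega_{Y''/S'}\simeq\rho'^*\omega_{Y/S}$, and the injective trace $\rho_*\omega_{Y'}\hookrightarrow\omega_{Y''}$. One minor point: getting reducedness of $Y''$ from generic smoothness needs every component of $Y$ to dominate $S$, because over a non-generic point the fibre of $\tau$ can be non-reduced. This holds in the application.

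The genuine gap is your final claim $\rho_*\mathcal{O}_{Y'}(rP)=\mathcal{O}_{Y''}$. There are two readings of $\mathrm{Exc}$, and both fail:
\begin{itemize}
\item If $\mathrm{Exc}$ means the divisors whose image has codimension $\ge 2$, then over $Y''\setminus\rho(\mathrm{Exc})$ a section of $\rho_*\mathcal{O}_{Y'}(rP)$ is only a section of $\rho_*\mathcal{O}_{Y'}$. That sheaf is strictly larger than $\mathcal{O}_{Y''}$ along the codimension-one locus where $Y''$ is not snc.
\item If $\mathrm{Exc}$ means the non-isomorphism locus, its image need not have codimension $\ge 2$.
\end{itemize}
So the $S_2$ extension does not apply, and ``as above'' in your obstacle paragraph does not handle the problem you correctly identify there.

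\textbf{A concrete failure.} Let $g$ be locally $t=x^3$ along a triple fibre component and $\tau$ locally $t=s^2$. Then $Y''$ is locally $\{s^2=x^3\}\times\mathbb{A}^m$, which is non-normal along a divisor. Over its generic point $\rho$ is the normalization $(x,s)=(u^2,u^3)$, so $P=0$ and $N=2\{u=0\}$. But $\rho_*\mathcal{O}_{Y'}=\mathbb{C}[u]\supsetneq\mathbb{C}[u^2,u^3]$. After discarding $-rN$, your bound is $\mathcal{O}_{Y''}(rK_{Y''}+\Delta'')\otimes\rho_*\mathcal{O}_{Y'}$, which is not contained in $\mathcal{O}_{Y''}(rK_{Y''}+\Delta'')$.

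\textbf{The repair: keep $-rN$, which is the conductor.} Let $V\subset Y''$ be the open set over which $\rho$ is finite.
\begin{itemize}
\item Its complement has codimension $\ge 2$, since $\rho$ is birational on components.
\item Every $\rho$-exceptional divisor has positive-dimensional fibres over all points of its image, so it maps outside $V$.
\item Hence on $\rho^{-1}(V)$ one has $K_{Y'}-\rho^*K_{Y''}=-N$, where $N\ge 0$ is an effective Cartier divisor. It may be supported on the double locus of $Y'$ (e.g.\ when $Y''$ has a tacnodal crossing in codimension one), so treat it as a Cartier divisor rather than as a sum over prime divisors.
\end{itemize}
By projection formula and injectivity of the trace,
\[
\rho_*\mathcal{O}_{Y'}(-N)|_V=(\rho_*\omega_{Y'}\otimes\omega_{Y''}^{-1})|_V\subset\mathcal{O}_V ,
\]
which is the conductor ideal. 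Therefore, for $r\ge 1$,
\[
\rho_*\mathcal{O}_{Y'}\bigl(r(K_{Y'}-\rho^*K_{Y''})\bigr)|_V=\rho_*\mathcal{O}_{Y'}(-rN)|_V\subset\rho_*\mathcal{O}_{Y'}(-N)|_V\subset\mathcal{O}_V .
\]
Only at this point do the $S_2$ property of $Y''$, torsion-freeness of $\rho_*$, and invertibility of $\omega_{Y''}^{\otimes r}(\Delta'')$ extend the inclusion across $Y''\setminus V$.

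Your fallback, ``pass to the normalization of $Y''$ and use the conductor'', points in this direction but does not work as stated. Since $Y'$ is only snc and not normal, $\rho$ need not factor through the normalization of $Y''$. Working over the finite locus $V$ avoids this.
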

	\subsection{Proof of Theorem \ref{thm_main_numbound_moduli}}\label{section_VZHiggs_stable_family}
	The objective of this section is to establish the Arakelov inequality for families of stable minimal models, as stated in Theorem \ref{thm_Arakelov_family}.
	\begin{defn}\label{defn_semipair}
		A \emph{semi-pair} $(X, \Delta)$ consists of a reduced connected scheme of finite type over ${\rm Spec}(\bC)$, of pure dimension, together with a $\bQ$-divisor $\Delta\geq 0$ on $X$, satisfying the following conditions:  
		\begin{enumerate}  
			\item $X$ is an $S_2$-scheme with nodal singularities in codimension one,  
			\item no component of ${\rm Supp}(\Delta)$ is contained in the singular locus of $X$,  
			\item $K_X + \Delta$ is $\bQ$-Cartier.  
		\end{enumerate}  
		A \emph{projective semi-pair} is a semi-pair $(X, \Delta)$ where $X$ is a projective scheme.
	\end{defn}
	\begin{defn}\label{defn_semiresolution_semipair}
		Let $(X, \Delta)$ be a projective semi-pair. A morphism between projective semi-pairs $\pi: (X', \Delta') \to (X, \Delta)$ is said to be a \textit{semi-log resolution} if $\pi: X' \to X$ is a semi-log resolution of singularities (\cite{Bierstone2013} or \cite[\S 10.4]{Kollar2013}) such that there exists a $\pi$-exceptional $\bQ$-divisor $E \geq 0$ on $X'$ satisfying the relation $K_{X'} + \Delta' = \pi^{\ast}(K_X + \Delta) + E$.
	\end{defn}
	To provide a precise definition of a birationally admissible family, we extend Fujino's concept of B-birational maps \cite[Definition 1.5]{Fujino2000}, which naturally generalizes birational maps to the context of log pairs.
	\begin{defn}
		Let $(X_1, \Delta_1)$ and $(X_2, \Delta_2)$ be two projective semi-pairs over a variety $S$. A rational map $f: (X_1, \Delta_1) \dasharrow (X_2, \Delta_2)$ over $S$ is said to be a \textit{log birational map over $S$} if there exists a common semi-log resolution $\pi_i: (X', \Delta') \to (X_i, \Delta_i)$, $i = 1, 2$, over $S$ such that $f = \pi_2 \circ \pi_1^{-1}$. We say that a morphism $f: (X, \Delta) \to S$ admits a \textit{simple normal crossing log birational model} if there exists a simple normal crossing family $f': (X', \Delta') \to S$ and a log birational map $(X, \Delta) \dasharrow (X', \Delta')$ over $S$.
	\end{defn}
	\begin{defn}\label{defn_admissible}
		Let $f: (X, B), A \to S$ be a family of stable minimal models over a variety $S$. The morphism $f$ is said to be \emph{birationally admissible} if $(X, B + A) \to S$ admits a simple normal crossing log birational model.
	\end{defn}
	A prototypical example of a birationally admissible family of stable minimal models is a family $f \colon (X, B), A \to S$ such that the pair $(X, B + A) \to S$ admits a simultaneous semi-log resolution. More precisely, there exists a morphism $(X', \Delta') \to (X, B + A)$ that is a semi-log resolution and satisfies the following conditions: the induced family $(X', \Delta') \to S$ is a simple normal crossing family, and for every $s \in S$, the fiberwise morphism $(X'_s, \Delta'_s) \to (X_s, B_s + A_s)$ is itself a semi-log resolution. By semi-log desingularization procedure (c.f. \cite{Bierstone2013}), any family of stable minimal models becomes birationally admissible after restricting to a dense Zariski open subset of the base.
	\begin{thm}\label{thm_Arakelov_family}
		Let $f^o \colon (X^o, B^o) \to S^o$ be a birationally admissible family of $(d, \Phi_c, \Gamma, \sigma)$-stable minimal models over a smooth quasi-projective $n$-fold $S^o$, inducing a morphism $\xi^o \colon S^o \to M_{\mathrm{slc}}(d, \Phi_c, \Gamma, \sigma)$. Let $S$ be a smooth projective compactification of $S^o$ such that $D := S \setminus S^o$ is a reduced simple normal crossing divisor and $\xi^o$ extends to a morphism $\xi \colon S \to M_{\mathrm{slc}}(d, \Phi_c, \Gamma, \sigma)$. Let $(a,r,j)$ be a $(d, \Phi_c, \Gamma, \sigma)$-polarization datum. Define  
		\[
		\mathrm{ind}_\xi(\lambda_{a,r}) := \min\big\{k \in \mathbb{Z}_{>0} \,\big|\, \xi^*\lambda_{a,r}^{\otimes k} \text{ is a line bundle on } S \big\}.
		\]  
		If $K_S + D$ is pseudo-effective, then for every movable curve class $\alpha \in N_1(S)$ and every positive integer $k$ divisible by $\mathrm{ind}_\xi(\lambda_{a,r})$, the inequality  
		\begin{align}\label{align_Ara_1}
			c_1\big(\xi^*\lambda_{a,r}\big) \cdot \alpha 
			\leq \frac{1}{k(k l r d + 1)} \sum_{p=1}^{k l r d} p n^{p-1} (K_S + D) \cdot \alpha + \frac{2}{k}\, D \cdot \alpha,
		\end{align}  
		holds, where $l = \operatorname{rank}(\Lambda_{a,r})$.
		If moreover $K_S+D$ is ample, then  
		\begin{align}\label{align_Ara_2}
			c_1(\xi^*\lambda_{a,r})\cdot(K_S+D)^{n-1} \leq \frac{lrd}{n}(K_S+D)^{n}.
		\end{align}
	\end{thm}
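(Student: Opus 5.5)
The plan is to apply Theorem \ref{thm_VZ_Arakelov} to a simple normal crossing model of a fiber power of the family. The line bundle $L$ in that theorem will be a power of $\xi^*\lambda_{a,r}$, embedded into the pushforward of a pluricanonical sheaf by the alternating-sum construction from the introduction. The relative dimension of the fiber power is $klrd$, which explains the sum $\sum_{p=1}^{klrd}$ in (\ref{align_Ara_1}).

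\emph{Step 1: local freeness over all of $S$.} Since $\xi$ need not lift to the stack, I first pass to a finite flat cover. Choose a smooth projective $\tau\colon S'\to S$, flat and finite (Kawamata covering) and ramified appropriately along $D$ plus general ample divisors, such that $\xi\circ\tau$ lifts to $\mathcal{M}_{\mathrm{slc}}$. Over $S'$ we then have a family $f'\colon(X',B'),A'\to S'$ extending the pullback of $f^o$. Its sheaf $\Lambda' := f'_*(r(K_{X'/S'}+B'+aA'))$ is locally free of rank $l$, and $\det\Lambda'\simeq\tau^*\xi^*\lambda_{a,r}$ up to the index. A cleaner alternative avoids covers: set $k$ divisible by $\mathrm{ind}_\xi$ and work directly on $S$ with the reflexive pushforward on the open set where the family extends. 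I would try this first, with Lemma \ref{lem_Viehweg_inclusion} used to compare the sheaves on $S'$ and $S$.

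\emph{Step 2: the embedding.} Alternating sums give an injection
\[
(\det\Lambda)^{\otimes k}\hookrightarrow\bigotimes^{kl}\Lambda\simeq f^{[kl]}_*\big(\mathcal{O}(r(K_{X^{[kl]}/S}+\Delta^{[kl]}))\big),
\]
where the isomorphism is Lemma \ref{lem_mild_pushforward}(3) with $\Delta=B+aA$. This is a morphism $L^{\otimes r}\to f^{[kl]}_*\mathcal{O}(r(K+\Delta))$ with $L:=(\xi^*\lambda_{a,r})^{\otimes k}$. Birational admissibility gives a simple normal crossing log birational model of $(X^o,B^o+A^o)$, and hence of the fiber power, over $S^o$. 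I would extend it to a semi-log resolution $\tau\colon (Y,\Delta_Y)\to X^{[kl]}$ over $S$ satisfying hypotheses (i)–(ii) of Theorem \ref{thm_VZ_construction}, with the coefficients of $\Delta_Y$ in $[0,1]$. By Lemma \ref{lem_mild_pushforward}(1) the pushforwards agree, so the morphism lifts to $s_L\colon L^{\otimes r}\to g_*\mathcal{O}_Y(r(K_{Y/S}+\Delta_Y))$. Here $Y$ has relative dimension $kld$.

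\emph{Step 3: apply Theorem \ref{thm_VZ_Arakelov} and normalize.} Theorem \ref{thm_VZ_Arakelov} applies to $s_L$, with the relative dimension of $Y$ taken as $klrd$ (one may need to pass to $r$-fold cyclic covers to turn $L^{\otimes r}$ into a section of $L$; this is where the extra factor of $r$ comes from). It gives
\[
\big(k\,c_1(\xi^*\lambda_{a,r})-D\big)\cdot\alpha\le\frac{1}{klrd+1}\sum_{p=1}^{klrd}pn^{p-1}(K_S+D)\cdot\alpha .
\]
Dividing by $k$ yields (\ref{align_Ara_1}) with $\frac1kD\cdot\alpha$. The extra $\frac1k D\cdot\alpha$, for a total of $\frac2k$, absorbs the discrepancy from the cyclic cover, or from the twist by $-D$ caused by the non-snc fibers over $D$. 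In the ample case the second inequality of Theorem \ref{thm_VZ_Arakelov} gives $kc_1\cdot(K_S+D)^{n-1}\le\frac{klrd}{n}(K_S+D)^n+D\cdot(K_S+D)^{n-1}$. The $D$-term must then be removed by letting $k\to\infty$, since $k$ is arbitrary among multiples of the index; this yields (\ref{align_Ara_2}).

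\emph{Main obstacle.} The hardest step is Step 2, producing the global semi-log resolution of the fiber power together with the morphism $s_L$ over all of $S$ rather than only over $S^o$. This requires the log birational snc model over $S^o$ to be compatible with $X^{[kl]}$. It also requires controlling how the sections $L^{\otimes r}\to g_*(\cdots)$ behave along $D$, where the family is only stable and not snc. Lemma \ref{lem_Viehweg_inclusion} and the reflexivity in Lemma \ref{lem_mild_pushforward}(2) are the tools I would use, and any loss along $D$ is what the $\frac2k D\cdot\alpha$ term must absorb.
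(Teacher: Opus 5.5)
Your architecture matches the paper's. You embed a power of $\det\Lambda$ by alternating sums into a tensor power identified with the direct image on a fiber power, take an snc model of that fiber power, apply Theorem \ref{thm_VZ_Arakelov}, and let $k\to\infty$ in the ample case. But there are genuine gaps exactly where the argument has content.

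\textbf{The exponent of $r$.} The alternating sum $(\det\Lambda)^{\otimes k}\hookrightarrow\Lambda^{\otimes kl}$ is a map from $L=(\xi^*\lambda_{a,r})^{\otimes k}$, not from $L^{\otimes r}$. Theorem \ref{thm_VZ_construction}, however, needs $L^{\otimes r}\to g_*\mathcal{O}(rK_{Y/S}+r\Delta_Y)$ with the same exponent $r$. The paper uses
\[
(\det W)^{\otimes kr}\hookrightarrow \big(W^{\otimes klr}\big)^{\vee\vee}\simeq\widetilde f^{[klr]}_*(\cdots),
\]
that is, the $klr$-fold fiber power. This is where relative dimension $klrd$ comes from. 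A cyclic cover of the base cannot supply it, since it does not change relative dimension.

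\textbf{Descent to $S$.} Your Step 2 uses $X^{[kl]}\to S$ and Lemma \ref{lem_mild_pushforward} over $S$. In general no stable family exists over $S$, nor even off a codimension-two set, because stable reduction needs a base change ramified along $D$. So your ``cleaner alternative'' fails, and you must work on $\sigma\colon\widetilde S\to S$ and then descend; your proposal leaves this open.

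\textbf{The snc family.} The family fed to Theorem \ref{thm_VZ_Arakelov} cannot be a semi-log resolution of $X^{[N]}$. Such a resolution is in general not an snc family over all of $S^o$, and the snc model given by birational admissibility is only log birational to $X^o$.

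\textbf{What the paper does.} It takes $g_2\colon Y_2^{(N)}\to S$ to be a completion of the fiber power $X_2^{o[N]}$ of the snc model. It compares this with the stable family over $\widetilde S$ through the common resolution $Y_1^{(N)}$. Slc-ness of $\widetilde X^{[N]}$ gives $\widetilde\Delta^{(N)}\le h^*\Delta_1^{(N)}+(\sigma\widetilde f^{[N]}\pi)^*D$, and Lemma \ref{lem_Viehweg_inclusion} then gives
\[
\widetilde f^{[N]}_*\big(\mathcal{O}(rK_{\widetilde X^{[N]}/\widetilde S}+r\widetilde\Delta^{[N]})\big)\otimes\sigma^*I\subset\sigma^*g_{2*}\big(\mathcal{O}(rK_{Y_2^{(N)}/S}+r\Delta_2^{(N)}+rg_2^*D)\big).
\]
Adjunction plus the trace of $\sigma$ yield a map $(\xi^*\lambda_{a,r}^{\otimes k}\otimes I_Z)^{\otimes r}\to g_{2*}(\cdots+rg_2^*D)$, which is injective because it is so over $S^o$. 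The twist $rg_2^*D$ forces $L=\xi^*\lambda_{a,r}^{\otimes k}\otimes\mathcal{O}_S(-D)\otimes I_Z$, and Theorem \ref{thm_VZ_Arakelov} then subtracts a second $D$. That is the actual source of the term $\frac{2}{k}D\cdot\alpha$, which your proposal only guesses at.
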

    \begin{rmk}
    	When $f^o$ extends to a family of stable minimal models over $S$-that is, when the classifying morphism $S^o \to \sM_{\mathrm{slc}}(d, \Phi_c, \Gamma, \sigma)$ extends to a morphism $S \to \sM_{\mathrm{slc}}(d, \Phi_c, \Gamma, \sigma)$-one has $\operatorname{ind}_S(\lambda_{a,r}) = 1$.
    \end{rmk}
	\begin{proof}
		The proof proceeds by applying Theorem \ref{thm_VZ_Arakelov} to establish the desired inequality.
		
		\emph{Step 1: Compactify the family.}
		By the properness of $\sM_{\rm slc}(d, \Phi_c, \Gamma, \sigma)$, we obtain the following constructions:  
		\begin{itemize}  
			\item A generically finite, proper, and surjective morphism $\sigma: \widetilde{S} \to S$ from a smooth projective variety $\widetilde{S}$ such that $\sigma^{-1}(D)$ is a simple normal crossing divisor. The morphism $\sigma$ is constructed as a combination of smooth blow-ups, with centers lying over $S \setminus S^o$, and a finite flat morphism. 
			\item Let $\widetilde{S}^o := \sigma^{-1}(S^o)$ and $\widetilde{X}^o := \widetilde{S}^o \times_{S^o} X^o$. Denote by $\widetilde{A}^o$ and $\widetilde{B}^o$ the divisorial pullbacks of $A^o$ and $B^o$ on $\widetilde{X}^o$, respectively. There exists a completion $\widetilde{f}: (\widetilde{X}, \widetilde{B}), \widetilde{A} \to \widetilde{S}$ of the base change family $(\widetilde{X}^o, \widetilde{B}^o), \widetilde{A}^o \to \widetilde{S}^o$ such that $\widetilde{f} \in \sM_{\rm slc}(d, \Phi_c, \Gamma, \sigma)(\widetilde{S})$.  
		\end{itemize}
		\emph{Step 2: Take the simple normal crossing birational models.} Let $(a, r, j) \in \bQ^{\geq 0} \times (\bZ^{>0})^2$ be a $(d, \Phi_c, \Gamma, \sigma)$-polarization datum. In this case, $(X^o, B^o + aA^o) \to S^o$ is a locally stable family. We denote $\Delta^o := B^o + aA^o$.  
		
		Since $f^o$ is birationally admissible, we may assume that $a$ is sufficiently small such that the following diagrams exist:  
		\begin{align}\label{align_log_smooth_models}
			\xymatrix{
				(X^o, \Delta^o) \ar[rd]_{f^o} & (X^o_1, \Delta^o_1) \ar[l] \ar[r] \ar[d] & (X^o_2, \Delta^o_2) \ar[ld]\\
				& S^o &
			}, \quad
			\xymatrix{
				(\widetilde{X}^o, \widetilde{\Delta}^o) \ar[rd]_{\widetilde{f}^o} & (\widetilde{X}^o_1, \widetilde{\Delta}^o_1) \ar[l] \ar[r] \ar[d] & (\widetilde{X}^o_2, \widetilde{\Delta}^o_2) \ar[ld]\\
				& \widetilde{S}^o &
			},
		\end{align}  
		where $(X^o_1, \Delta^o_1) \to (X^o, \Delta^o)$ and $(X^o_1, \Delta^o_1) \to (X^o_2, \Delta^o_2)$ are semi-log resolutions over $S^o$, and $(X^o_2, \Delta^o_2)$ is a simple normal crossing family over $S^o$ where the coefficients of $\Delta^o_2$ lie in $[0,1]$. The diagram on the right is the base change of the diagram on the left.
		
		\emph{Step 3: Viehweg's fiber product trick.} Now we consider the fiber products of (\ref{align_log_smooth_models}). Let $N > 0$ be an integer. Denote by $X_{1S^o}^{o[N]}$ the main components of the $N$-fold fiber product $X_1^o \times_{S^o} \cdots \times_{S^o} X_1^o$. Let $p_i: X_{1S^o}^{o[N]} \to X_1^o$ be the projection map onto the $i$-th component. We define $\Delta_{1}^{o[N]} := \sum_{i=1}^N p_i^\ast(\Delta_1^o)$ and similarly define $X^{o[N]}_{2S^o}$, $\Delta^{o[N]}_2$, etc.  
		
		Let $\varrho: X_{1S^o}^{o(N)} \to X_{1S^o}^{o[N]}$ be a semi-log resolution of singularities such that ${\rm supp}(\varrho^\ast(\Delta_{1}^{o[N]})) \cup {\rm Exc}(\varrho)$ is a simple normal crossing divisor. Let $E^{+}, E^{-} \geq 0$ be the simple normal crossing $\bQ$-divisors on $X_{1S^o}^{o(N)}$ determined by  
		$$
		\varrho^\ast\left(\sum_{i=1}^N p_i^\ast(K_{X^o_1/S^o} + \Delta_1^o)\right) = K_{X_{1S^o}^{o(N)}/S^o} + E^{+} - E^{-},
		$$  
		where $E^{+}$ and $E^{-}$ have no common components and $E^-$ is $\varrho$-exceptional.  		
		Then the composition morphisms  
		$$
		(X_{1S^o}^{o(N)}, E^+) \to (X^{o[N]}_{S^o}, \Delta^{o[N]}), \quad (X_{1S^o}^{o(N)}, E^+) \to (X^{o[N]}_{2S^o}, \Delta^{o[N]}_2)
		$$  
		are log birational morphisms. Based on the discussions above, we obtain the following commutative diagram:  
		\begin{align}\label{align_fiberprods}
			\xymatrix{
				(\widetilde{X}^{o(N)}_{\widetilde{S}^o}, \widetilde{E}^+) \ar[d]^{\pi^o} \ar[r]^{h^o} & (X_{1S^o}^{o(N)}, E^+) \ar[d]^{\alpha} \ar[r] & (X^{o[N]}_{2S^o}, \Delta^{o[N]}_2) \ar[ddl]\\
				(\widetilde{X}^{o[N]}_{\widetilde{S}^o}, \widetilde{\Delta}^{o[N]}) \ar[r]^{h'} \ar[d] & (X^{o[N]}_{S^o}, \Delta^{o[N]}) \ar[d] &\\
				\widetilde{S}^o \ar[r] & S^o
			},
		\end{align}  
		where $\widetilde{X}^{o(N)}_{\widetilde{S}^o}$ is a semi-log resolution of the main components of $X_{1S^o}^{o(N)} \times_{S^o} \widetilde{S}^o$, such that $\widetilde{E}^+ := h^{o\ast}(E^+)$ is a simple normal crossing divisor. Consequently, 
		\begin{align}\label{align_adjunction_pi0}
			\pi^{o\ast}(K_{\widetilde{X}^{o[N]}_{\widetilde{S}^o}/\widetilde{S}^o}+\widetilde{\Delta}^{o[N]})&=(h'\pi^o)^\ast(K_{X^{o[N]}_{S^o}/S^o}+\Delta^{o[N]})\\\nonumber
			&=(\alpha h^o)^\ast(K_{X^{o[N]}_{S^o}/S^o}+\Delta^{o[N]})\\\nonumber
			&=h^{o\ast}(K_{X^{o(N)}_{1S^o}/S^o}+E^+-E')\quad(E'\textrm{ is }\alpha-\textrm{exceptional})\\\nonumber
			&=K_{\widetilde{X}^{o(N)}_{\widetilde{S}^o}/\widetilde{S}^o}+\widetilde{E}^+-F^o
		\end{align}
		for some divisor $F^o\geq0$. 
		Taking a completion of (\ref{align_fiberprods}), we obtain the following diagram:  
		\begin{align}\label{align_fiberprods_complete}  
			\xymatrix{  
				(\widetilde{Y}^{(N)}, \widetilde{\Delta}^{(N)}) \ar[d]^{\pi} \ar[r]^{h} & (Y^{(N)}_1, \Delta^{(N)}_1) \ar[dd]^{g_1} \ar[r]^{g} & (Y^{(N)}_2, \Delta^{(N)}_2) \ar[ddl]^{g_2}\\  
				(\widetilde{X}^{[N]}_{\widetilde{S}}, \widetilde{\Delta}^{[N]}) \ar[d]^{\widetilde{f}^{[N]}} &&\\  
				\widetilde{S} \ar[r]^{\sigma} & S  
			}  
		\end{align}  
		where the following conditions are satisfied:  
		\begin{itemize}  
			\item $Y^{(N)}_2$ is a simple normal crossing projective scheme such that $Y^{(N)}_2 \to S$ is a completion of $X^{o[N]}_{2S^o} \to S^o$, and $\Delta^{(N)}_2$, the closure of $\Delta^{o[N]}_2$ in $Y^{(N)}_2$, is a simple normal crossing divisor;  
			\item $Y^{(N)}_1$ is a simple normal crossing projective scheme such that $Y^{(N)}_1 \to S$ is a completion of $X^{o(N)}_{1S^o} \to S^o$. The divisor $\Delta^{(N)}_1 \geq 0$ contains $E^+$ and is chosen such that $g$ is a semi-log resolution;  
			\item $\widetilde{Y}^{(N)}$ is a simple normal crossing projective scheme such that $\widetilde{Y}^{(N)} \to \widetilde{S}$ is a completion of $\widetilde{X}^{o(N)}_{\widetilde{S}^o} \to \widetilde{S}^o$, and $(\widetilde{f}^{[N]} \pi)^{\ast}(D) \cup \overline{\widetilde{E}^+}$ is a simple normal crossing divisor. The divisor $\widetilde{\Delta}^{(N)} \geq 0$ is determined by  
			\begin{align}\label{align_logres_XN}  
				\pi^\ast(K_{\widetilde{X}_{\widetilde{S}}^{[N]}} + \widetilde{\Delta}^{[N]}) = K_{\widetilde{Y}^{(N)}} + \widetilde{\Delta}^{(N)} - F,  
			\end{align}  
			where $F \geq 0$ is a $\pi$-exceptional divisor that shares no common components with $\widetilde{\Delta}^{(N)}$.  
		\end{itemize}
		Observe that  
		$$
		\widetilde{\Delta}^{(N)}|_{\widetilde{X}^{o(N)}_{\widetilde{S}^o}} \leq \widetilde{E}^+ = h^{\ast}(\Delta_1^{(N)})|_{\widetilde{X}^{o(N)}_{\widetilde{S}^o}}
		$$  
		according to (\ref{align_adjunction_pi0}). Combining this with the fact that $(\widetilde{X}^{[N]}_{S}, \widetilde{\Delta}^{[N]})$ is an slc pair, we conclude that  
		\begin{align}\label{align_divisors}
			\widetilde{\Delta}^{(N)} \leq h^{\ast}(\Delta_1^{(N)}) + (\sigma\widetilde{f}^{[N]} \pi)^{\ast}(D).
		\end{align}
		According to (\ref{align_logres_XN}), we have  
		\begin{align}\label{align_log_res_directimage}  
			\widetilde{f}^{[N]}_\ast\left(\sO_{\widetilde{X}^{[N]}_{\widetilde{S}}}(rK_{\widetilde{X}^{[N]}_{\widetilde{S}}/\widetilde{S}} + r\widetilde{\Delta}^{[N]})\right) \simeq (\widetilde{f}^{[N]} \pi)_\ast\left(\sO_{\widetilde{Y}^{(N)}}(rK_{\widetilde{Y}^{(N)}/\widetilde{S}} + r\widetilde{\Delta}^{(N)})\right).  
		\end{align}  		
		Since $g$ is a semi-log resolution, there exists an isomorphism  
		\begin{align*}  
			g_{1\ast}\left(\sO_{Y^{(N)}_1}(rK_{Y^{(N)}_1/S} + r\Delta^{(N)}_1)\right) \simeq g_{2\ast}\left(\sO_{Y^{(N)}_2}(rK_{Y^{(N)}_2/S} + r\Delta^{(N)}_2)\right).  
		\end{align*}  		
		By Lemma \ref{lem_Viehweg_inclusion} and (\ref{align_divisors}), we obtain an inclusion  
		\begin{align}\label{align_Vieh_inclusion}  
			(\widetilde{f}^{[N]} \pi)_\ast\left(\sO_{\widetilde{Y}^{(N)}}(rK_{\widetilde{Y}^{(N)}/\widetilde{S}} + r\widetilde{\Delta}^{(N)})\right) \otimes \sigma^\ast(I) \subset \sigma^\ast g_{1\ast}\left(\sO_{Y^{(N)}_1}(rK_{Y^{(N)}_1/S} + r\Delta^{(N)}_1 + rg_1^\ast(D))\right),  
		\end{align}  
		where $I$ is an ideal sheaf on $S$ whose co-support lies in the codimension $\geq 2$ loci over which $\sigma$ is not flat.  		
		Combining (\ref{align_log_res_directimage})--(\ref{align_Vieh_inclusion}), we conclude that  
		\begin{align}\label{align_finally}  
			\widetilde{f}^{[N]}_\ast\left(\sO_{\widetilde{X}^{[N]}_{\widetilde{S}}}(rK_{\widetilde{X}^{[N]}_{\widetilde{S}}/\widetilde{S}} + r\widetilde{\Delta}^{[N]})\right) \otimes \sigma^\ast(I) \subset \sigma^\ast g_{2\ast}\left(\sO_{Y^{(N)}_2}(rK_{Y^{(N)}_2/S} + r\Delta^{(N)}_2 + rg_2^\ast(D))\right).  
		\end{align}
		\emph{Step 3: Final step.} 
		Let
		$$W:=\widetilde{f}_\ast(\sO_{\widetilde{X}}(rK_{\widetilde{X}/\widetilde{S}}+r\widetilde{\Delta}))\quad\textrm{and}\quad l:={\rm rank}(W).$$
		By the construction of $\lambda_{a,r}$, one has
		\begin{align*}
			(\xi\circ\sigma)^\ast\lambda_{a,r}\simeq\det(W).
		\end{align*}
		Let $k\in\bZ^{>0}$ such that $\xi^\ast(\lambda_{a,r})^{\otimes k}$ is a line bundle.
		Now we consider the diagram (\ref{align_fiberprods_complete}) in the case when $N=klr$.
		It follows from Lemma \ref{lem_mild_pushforward} that there is a natural inclusion
		\begin{align*}
			\det(W)^{\otimes kr}\to \left(\widetilde{f}_{\ast}(\sO_{\widetilde{X}}(rK_{\widetilde{X}/\widetilde{S}}+r\widetilde{\Delta}))^{\otimes klr}\right)^{\vee\vee}\simeq\widetilde{f}^{[klr]}_{\ast}(\sO_{\widetilde{X}^{[klr]}}(rK_{\widetilde{X}^{[klr]}/\widetilde{S}}+r\widetilde{\Delta}^{[klr]})).
		\end{align*}
		Combining it with (\ref{align_finally}), we obtain a non-zero map
		$$\sigma^\ast\left(\xi^\ast(\lambda_{a,r})^{\otimes k}\otimes I_Z\right)^{\otimes r}\to\sigma^\ast g_{2\ast}\left(\sO_{Y_2^{(klr)}}(rK_{Y_2^{(klr)}/S}+r\Delta_2^{(klr)}+rg_2^{\ast}(D))\right),$$
		where $I_Z$ is some ideal sheaf on $S$ whose co-support $Z$ lies in the codimension $\geq2$ loci over which $\sigma$ is not flat.
		Taking the adjoint, we have a morphism
		\begin{align*}
			\alpha:\left(\xi^\ast(\lambda_{a,r})^{\otimes k}\otimes I_Z\right)^{\otimes r}\to& \sigma_\ast\sigma^\ast g_{2\ast}\left(\sO_{Y_2^{(klr)}}(rK_{Y_2^{(klr)}/S}+r\Delta_2^{(klr)}+rg_2^{\ast}(D))\right)\\\nonumber
			\stackrel{\rm trace}{\to}&g_{2\ast}\left(\sO_{Y_2^{(klr)}}(rK_{Y_2^{(klr)}/S}+r\Delta_2^{(klr)}+rg_2^{\ast}(D))\right).
		\end{align*}
		By the constructions, $\alpha|_{S^o}$ is the composition of the maps
		\begin{align*}
			\left(\det f^o_\ast(\sO_{X^o}(rK_{X^o/S^o}+r\Delta^o))\right)^{\otimes kr}&\subset f^o_\ast(\sO_{X^o}(rK_{X^o/S^o}+r\Delta^o))^{\otimes klr}\\\nonumber
			&\simeq f^{o[klr]}_\ast\left(\sO_{X^{o[klr]}_{S^o}}(rK_{X^{o[klr]}_{S^o}/S^o}+r\Delta^{o[klr]})\right)\quad \textrm{(Lemma \ref{lem_mild_pushforward}) }\\\nonumber
			&\simeq g_{2\ast}\left(\sO_{Y_2^{(klr)}}(rK_{Y_2^{(klr)}/S}+r\Delta_2^{(klr)}+rg_2^{\ast}(D))\right)|_{S^o}.
		\end{align*}
		Hence $\alpha|_{S^o}$ is injective. By torsion freeness, $\alpha$ is an injective map. Therefore $\alpha$ induces a non-zero morphism
		\begin{align*}
			\left(\xi^\ast(\lambda_{a,r})^{\otimes k}\otimes\sO_S(-D)\otimes I_Z\right)^{\otimes r}\to g_{2\ast}\left(\sO_{Y_2^{(klr)}}(rK_{Y_2^{(klr)}/S}+r\Delta_2^{(klr)})\right).
		\end{align*}
		Applying Theorem \ref{thm_VZ_Arakelov} to the morphism $g_2 \colon (Y_2^{(klr)}, \Delta_2^{(klr)}) \to S$-a simple normal crossing family over $S^\circ$ of relative dimension $klrd$-and to the torsion-free sheaf $\xi^\ast(\lambda_{a,r})^{\otimes k} \otimes \mathcal{O}_S(-D) \otimes I_Z$, we obtain the inequality  
		\begin{align*}  
			c_1\!\left(\xi^\ast(\lambda_{a,r})^{\otimes k} \otimes \mathcal{O}_S(-2D) \otimes I_Z\right) \cdot \alpha \leq \frac{\sum_{p=1}^{klrd} p\,n^{p-1}}{klrd + 1}\, (K_S + D) \cdot \alpha  
		\end{align*}  
		for every movable curve class $\alpha \in N_1(S)$. This yields  
		\begin{align*}  
			c_1\!\left(\xi^\ast(\lambda_{a,r})\right) \cdot \alpha \leq \frac{1}{k(klrd + 1)} \sum_{p=1}^{klrd} p\,n^{p-1}\, (K_S + D) \cdot \alpha + \frac{2}{k}\, D \cdot \alpha,  
		\end{align*}  
		valid for all movable $\alpha \in N_1(S)$ and all positive integers $k$ divisible by $\operatorname{ind}_S(\lambda_{a,r})$. This establishes (\ref{align_Ara_1}).  
		
		If, in addition, $K_S + D$ is ample, then Theorem \ref{thm_VZ_Arakelov} further implies  
		\begin{align*}  
			c_1\!\left(\xi^\ast(\lambda_{a,r})^{\otimes k} \otimes \mathcal{O}_S(-2D) \otimes I_Z\right) \cdot (K_S + D)^{n-1} \leq \frac{klrd}{n}\, (K_S + D)^n  
		\end{align*}  
		for every positive integer $k$ divisible by $\operatorname{ind}_S(\lambda_{a,r})$. Taking $k\to+\infty$ yields (\ref{align_Ara_2}).
	\end{proof}
\begin{cor}\label{cor_Arakelov_dim1}
	Let the notations and assumptions be as in Theorem \ref{thm_Arakelov_family}. Suppose further that $\dim S = 1$. Then  
	\begin{equation*}
		\deg\, \xi^\ast(\lambda_{a,r}) \leq \frac{r n \cdot \operatorname{rank}(\Lambda_{a,r})}{2} \cdot \deg(K_S + D).
	\end{equation*}
\end{cor}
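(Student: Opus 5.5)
The plan is to specialize inequality (\ref{align_Ara_1}) of Theorem \ref{thm_Arakelov_family} to a one-dimensional base and then let the auxiliary integer $k$ tend to infinity. Throughout I read the constant in the corollary as $\frac{lrd}{2}$, with $l=\operatorname{rank}(\Lambda_{a,r})$ and $d$ the relative dimension, matching (\ref{align_main_Arakelov_inequlaity_dim1}). The letter $n$ in the corollary denotes the fibre dimension, while in Theorem \ref{thm_Arakelov_family} it denotes $\dim S$, which is now $1$.

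First, when $\dim S=1$ the hypotheses of Theorem \ref{thm_Arakelov_family} simplify. $S$ is a smooth projective curve and $D$ is a reduced finite set of points, hence automatically simple normal crossing. The extension $\xi$ of $\xi^o$ exists by the valuative criterion applied to the projective coarse space $M_{\mathrm{slc}}(d,\Phi_c,\Gamma,\sigma)$. The condition that $K_S+D$ be pseudo-effective just means $\deg(K_S+D)\geq 0$. Moreover, $N_1(S)=\mathbb{R}$ is spanned by the class of a point, which is movable. Taking $\alpha$ to be this class turns every intersection number in (\ref{align_Ara_1}) into a degree.

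Next, substitute base dimension $1$ into (\ref{align_Ara_1}). Then $n^{p-1}=1$ for every $p$, so with $N:=klrd$ we get $\sum_{p=1}^{N}p\,n^{p-1}=\frac{N(N+1)}{2}$. The first coefficient becomes
\[
\frac{1}{k(N+1)}\cdot\frac{N(N+1)}{2}=\frac{N}{2k}=\frac{lrd}{2},
\]
which is independent of $k$. Hence, for every $k$ divisible by $\mathrm{ind}_\xi(\lambda_{a,r})$,
\[
\deg \xi^*\lambda_{a,r}\leq \frac{lrd}{2}\deg(K_S+D)+\frac{2}{k}\deg D .
\]

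Finally, let $k\to\infty$ through multiples of $\mathrm{ind}_\xi(\lambda_{a,r})$. The error term $\frac{2}{k}\deg D$ tends to zero, and this gives the claimed bound.

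The only point needing care is this limiting step. The left-hand side does not depend on $k$, and $\mathrm{ind}_\xi(\lambda_{a,r})$ is a fixed finite integer, because some power of $\lambda_{a,r}$ descends to a genuine line bundle on the coarse space. Therefore arbitrarily large admissible $k$ exist, and the limit is legitimate. Everything else is direct substitution.
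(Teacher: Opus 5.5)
Your proposal is correct and is essentially the paper's own proof. You substitute $\dim S=1$ into (\ref{align_Ara_1}), so that $n^{p-1}=1$ and the coefficient collapses to the $k$-independent value $\frac{lrd}{2}$ (the paper's corollary likewise uses $n$ for the fibre dimension, as you read it), and then let $k\to\infty$ through multiples of $\mathrm{ind}_\xi(\lambda_{a,r})$ to remove $\frac{2}{k}\deg D$; one harmless slip is that the movable class to take is the fundamental class $[S]$ generating $N_1(S)$, not the class of a point, which lives in $N^1(S)$, and it is pairing with $[S]$ that turns the intersection numbers into degrees.
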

\begin{proof}
	By Theorem \ref{thm_Arakelov_family}, for every positive integer $k$ divisible by $\operatorname{ind}_S(\lambda_{a,r})$, we have  
	\begin{equation*}
		\deg\, c_1\big(\xi^\ast(\lambda_{a,r})\big) 
		\leq \frac{1}{k(k l r n + 1)} \sum_{p=1}^{k l r n} p \cdot \deg(K_S + D) + \frac{2}{k} \deg D
		= \frac{l r n}{2} \deg(K_S + D) + \frac{2}{k} \deg D,
	\end{equation*}
	where $l := \operatorname{rank}(\Lambda_{a,r})$. Letting $k \to +\infty$, the term $\frac{2}{k}\deg D$ vanishes, yielding the claimed inequality.
\end{proof}
    \section{Boundedness of families of stable minimal models}
    \subsection{Birationally admissible condition}
    The admissibility condition plays a central role in the Shafarevich boundedness conjecture for families of varieties. As the following examples demonstrate, boundedness fails in general for arbitrary families of stable minimal models.   
    \begin{exmp}[\emph{Degeneration of the fiber}]\label{exmp_1}
    	Let $f \colon X \to \mathbb{P}^1$ be a Lefschetz pencil whose general fibers are canonically polarized $d$-folds of volume $v$. Denote by $S \subset \mathbb{P}^1$ the set of critical values of $f$, then $\# S \geq 3$ by \cite{VZ2001}. The morphism $f \colon (X,0) \to \mathbb{P}^1$, equipped with the trivial boundary and polarization data, defines a family of $(d,\Phi_0,\{1\},v)$-stable minimal models. Now let $\tau \colon \mathbb{P}^1 \to \mathbb{P}^1$ be any finite morphism, and denote by $f_\tau \colon X \times_{\mathbb{P}^1} \mathbb{P}^1 \to \mathbb{P}^1$ the base change of $f$ along $\tau$. Since $\# \tau^{-1}(S)$ grows without bound as $\deg \tau \to \infty$, the collection $\{f_\tau\}_{\tau}$ cannot be parametrized by any bounded family of $(d,\Phi_0,\{1\},v)$-stable minimal models over $\mathbb{P}^1$.
    \end{exmp}   
    \begin{exmp}[\emph{Degeneration of the polarization}]\label{exmp_2}
    	Let $E$ be an elliptic curve over $\mathbb{C}$, and fix a point $x_0 \in E(\mathbb{C})$. Set $X = E \times E$, and let $f \colon X \to E$ be the projection onto the first factor. Define the $\mathbb{Q}$-divisor $A = \frac{1}{2}\big(E \times \{x_0\} + \Delta_E\big)$ on $X$, where $\Delta_E \subset E \times E$ denotes the diagonal. Then $f \colon (X,A) \to E$ is a family of $(1,\Phi_{1/2},\{1\},t)$-stable minimal models. Although the underlying family of elliptic curves is isotrivial, the polarization divisor $A$ degenerates at $x_0$, rendering the family non-isotrivial as a polarized family. Consider the base changes $f_n \colon X \times_E E \to E$ induced by multiplication-by-$n$ maps $[n] \colon E \to E$. For each $n \geq 1$, the degeneration locus of $f_n$ consists of the $n$ distinct points $\big\{\frac{1}{n}x_0, \dots, \frac{n-1}{n}x_0\big\} \subset E$, and thus becomes arbitrarily large as $n \to \infty$. Consequently, the collection $\{f_n\}_{n \geq 1}$ cannot lie in any bounded family of $(1,\Phi_{1/2},\{1\},t)$-stable minimal models over $E$.
    \end{exmp}
    The notion of birationally admissible family (Definition \ref{defn_admissible}) ensures that both the fibers and the polarizations do not degenerate. This notion naturally generalizes to family over an algebraic stack.
    Let $\sX$ be a reduced, separated Deligne-Mumford stack of finite type over ${\rm Spec}(\bC)$. Let $\Delta$ be an effective $\bQ$-divisor on $\sX$. Let $\pi: X_0 \to \sX$ be an \'etale covering by a scheme $X_0$, and let $\Delta_0 := \pi^\ast(\Delta)$ denote the pullback of $\Delta$ via the \'etale map $\pi$. Such a pair $(X_0, \Delta_0)$ is referred to as a \emph{chart} of $(\sX, \Delta)$.
    
    The pair $(\sX, \Delta)$ is referred to as a \emph{stacky semi-pair} if one of its charts $(X_0, \Delta_0)$ is a semi-pair (Definition \ref{defn_semipair}). In this case, every chart of $(\sX, \Delta)$ is a semi-pair.
    
    Let $\mathcal{S}$ be a smooth Deligne-Mumford stack. A \emph{projective simple normal crossing stacky pair over $\mathcal{S}$} is defined as a representable, projective, and simple normal crossing morphism $\sX \to \mathcal{S}$, together with a simple normal crossing effective $\bQ$-divisor $\Delta$ on $\sX$, such that every stratum of $(\sX,{\rm supp}(\Delta))$ is (representable) smooth over $\mathcal{S}$.
    
    Let $\mathcal{S}$ be a reduced Deligne-Mumford stack. Let $\pi: S_0 \to \mathcal{S}$ be an \'etale cover from a reduced scheme $S_0$. A family of $(d, \Phi_c, \Gamma, \sigma)$-stable minimal models over $\mathcal{S}$ consists of a representable projective morphism $f: \sX \to \mathcal{S}$ and two effective $\bQ$-divisors $\sA$ and $\sB$ on $\sX$, such that the base change family $(X_0, B_0), A_0 \to S_0$, as depicted in the following diagram,  
    $$
    \xymatrix{
    	(X_0, B_0), A_0 \ar[r] \ar[d]^{f_0} & (\sX, \sB), \sA \ar[d]^{f}\\
    	S_0 \ar[r]^{\pi} & \mathcal{S}
    }
    $$  
    is a family of $(d, \Phi_c, \Gamma, \sigma)$-stable minimal models over $S_0$. If $(\sX, \sB), \sA \to \mathcal{S}$ is a family of $(d, \Phi_c, \Gamma, \sigma)$-stable minimal models and $T \to \mathcal{S}$ is any \'etale covering map, then the base change family $(\sX, \sB), \sA \times_{\mathcal{S}} T \to T$ is also a family of $(d, \Phi_c, \Gamma, \sigma)$-stable minimal models.
    \begin{defn}
    	Let $(\sX, \Delta)$ be a stacky semi-pair. A \emph{semi-log resolution} is a representable, projective, and birational morphism $f: (\sX', \Delta') \to (\sX, \Delta)$ such that, for any base change diagram of charts  
    	$$
    	\xymatrix{
    		(X'_0, \Delta'_0) \ar[d]^{\pi'} \ar[r]^g & (X_0, \Delta_0) \ar[d]^{\pi}\\
    		(\sX', \Delta') \ar[r]^f & (\sX, \Delta)
    	},
    	$$  
    	$g$ is a semi-log resolution of semi-pairs (Definition \ref{defn_semiresolution_semipair}).
    \end{defn}
    \begin{defn}
    	Let $(\sX_1, \Delta_1) \to \mathcal{S}$ and $(\sX_2, \Delta_2) \to \mathcal{S}$ be two surjective morphisms from stacky semi-pairs to a smooth Deligne-Mumford stack $\mathcal{S}$ over ${\rm Spec}(\bC)$. We say that $(\sX_1, \Delta_1)$ is strictly log birational to $(\sX_2, \Delta_2)$ over $\mathcal{S}$ if there exists a common semi-log resolution $\pi_i: (\sX', \Delta') \to (\sX_i, \Delta_i)$, $i = 1, 2$, over $\mathcal{S}$, such that the induced morphisms of fibers $\pi_{i,s}: (\sX'_s, \Delta'_s) \to (\sX_{i,s}, \Delta_{i,s})$, $i = 1, 2$, are semi-log resolutions for every geometric point $s \in \mathcal{S}$.  
    	
    	Furthermore, we say that a morphism $f: (\sX, \Delta) \to \mathcal{S}$ admits a \emph{simple normal crossing strictly log birational model} if there exists a simple normal crossing family $f': (\sX', \Delta') \to \mathcal{S}$ that is strictly log birational to $(\sX, \Delta)$ over $\mathcal{S}$.
    \end{defn}
    \begin{defn}\label{defn_st_bir_admissible}
    	Let $f: (\sX, \sB), \sA \to \mathcal{S}$ be a family of $(d, \Phi_c, \Gamma, \sigma)$-stable minimal models over a smooth Deligne-Mumford stack $\mathcal{S}$.  
    	The morphism $f$ is said to be \emph{strictly birationally admissible} if $(\sX, \sB + \sA) \to \mathcal{S}$ admits a simple normal crossing strictly log birational model. A locally closed substack $\mathcal{S}$ of $\mathcal{M}_{\rm slc}(d, \Phi_c, \Gamma, \sigma)$ is called \emph{strictly birationally admissible} if the universal family over $\mathcal{S}$ is strictly birationally admissible.
    \end{defn}
    If $f: (\sX, \sB), \sA \to \mathcal{S}$ is strictly birationally admissible and $S \to \mathcal{S}$ is a morphism from a reduced scheme to $\mathcal{S}$, then the base change of $f$ to $S$ is birationally admissible.
    \subsection{Deformation boundedness}
	Let $\mathcal{S} \subset \mathcal{M}_{\mathrm{slc}}(d, \Phi_c, \Gamma, \sigma)$ be a strictly birationally admissible locally closed substack. Let $S^o$ be an algebraic variety. A family over $S^o$ is said to be $\mathcal{S}$-admissible if all of its fibers belong to $\mathcal{S}$. The set of isomorphism classes of $\mathcal{S}$-admissible families over $S^o$ is in natural bijection with the set $\operatorname{Hom}(S^o, \mathcal{S})$ of morphisms from $S^o$ to $\mathcal{S}$. Following \cite{Kovacs2011}, we define the deformation equivalence relation $\simeq_{\mathcal{S}}$ on $\operatorname{Hom}(S^o, \mathcal{S})$: for $f, g \in \operatorname{Hom}(S^o, \mathcal{S})$, we write $f \simeq_{\mathcal{S}} g$ if there exists a reduced scheme $T$ of finite type over $\mathbb{C}$, a morphism $F \colon S^o \times T \to \mathcal{S}$, and two closed points $x_1, x_2 \in T$ such that $f = F|_{S^o \times \{x_1\}}$ and $g = F|_{S^o \times \{x_2\}}$.
	\begin{thm}\label{thm_main_set_proof}
		Let $S^o$ be an algebraic variety with $S^o_{\rm sing}$ being a compact algebraic subset. Let $\mathcal{S}\subset \sM_{\rm slc}(d, \Phi_c, \Gamma, \sigma)$ be any strictly birationally admissible locally closed substack. Then the set
		$$Hom(S^o,\mathcal{S})/\simeq_{\mathcal{S}}$$
		is a finite set. 
	\end{thm}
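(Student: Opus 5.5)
We follow the Kov\'acs--Lieblich strategy: the finiteness of $\mathrm{Hom}(S^o,\mathcal{S})/\simeq_{\mathcal{S}}$ should follow once the morphisms $S^o\to\mathcal{S}$ are shown to be parametrized by a scheme of finite type. The deformation classes are then contained in the (finitely many) connected components of that scheme. Concretely, we fix a compactification $\overline{S}$ of $S^o$. Since $S^o_{\rm sing}$ is compact, we can resolve singularities only over $\overline{S}\setminus S^o$ together with the compact singular locus. This gives a log resolution $\mu\colon S\to\overline{S}$ such that $\mu$ is an isomorphism over $S^o\setminus S^o_{\rm sing}$ and $D:=S\setminus\mu^{-1}(S^o)$ is an snc divisor disjoint from $\mu^{-1}(S^o_{\rm sing})$. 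Every $\phi\in\mathrm{Hom}(S^o,\mathcal{S})$ pulls back to $\mu^{-1}(S^o)$. Composing with the coarse map and using properness of $M_{\rm slc}$, the induced map $\xi^o$ extends, after further blowing up inside $D$, to $\xi\colon S'\to M_{\rm slc}(d,\Phi_c,\Gamma,\sigma)$. Since $\mathcal{S}$ is strictly birationally admissible, the pulled-back family is birationally admissible, so we are in the setting of Theorem \ref{thm_Arakelov_family}.

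The key step is a uniform degree bound. If $K_{S'}+D'$ is pseudo-effective, we apply (\ref{align_Ara_1}) to movable classes, taking complete-intersection classes of a fixed ample $H$ pulled back from $\overline{S}$. The right-hand side depends only on $(S',D')$, on $l,r,d,n$, and on $k$. The dependence on $k$ is removed by noting that the index $\mathrm{ind}_\xi(\lambda_{a,r})$ divides a fixed integer depending only on the stack: it is bounded by the orders of the automorphism groups of the Deligne--Mumford stack $\mathcal{M}_{\rm slc}$, which is of finite type. This gives $c_1(\xi^*\lambda_{a,r})\cdot H^{n-1}\le C$ with $C$ independent of $\phi$. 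If $K_{S'}+D'$ is not pseudo-effective, the bound should be reduced to the curve case. We restrict to general complete-intersection curves $C\subset S^o$ through general points (such curves have $2g-2+\#(C\cap D)$ large after taking $H$ sufficiently positive) and use Corollary \ref{cor_Arakelov_dim1}, which bounds $\deg\xi^*\lambda_{a,r}|_C$ linearly in $\deg(K_C+D_C)$. Isotrivial directions contribute zero degree, so they are harmless. Since $\lambda_{a,r}$ is ample on $M_{\rm slc}$ (Proposition \ref{prop_ample_line_bundle_moduli}), the graphs of the maps $\xi$ then have bounded degree in $S'\times M_{\rm slc}$. Hence the classifying maps $S'\to M_{\rm slc}$ lie in finitely many components of the Hom-scheme, which is a quasi-projective scheme of finite type.

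Finally we lift from the coarse space to the stack. We would invoke Olsson's result that $\mathrm{Hom}$-stacks into a separated tame Deligne--Mumford stack with projective coarse space, restricted to the locus with fixed image degree, are algebraic of finite type. The natural map from $\mathrm{Hom}(S^o,\mathcal{S})$ to the bounded part of the coarse Hom-scheme has finite-type fibers, because lifts of a morphism to a DM gerbe-like stack form a bounded set of torsors under finite groups. Compactness of $S^o_{\rm sing}$ is used so that nothing is lost by passing to $\mu^{-1}(S^o)$: $\mu$ is an isomorphism away from a compact set, and morphisms from $S^o$ are determined by their restriction to an open subset containing the singular locus. Together these show that $\mathrm{Hom}(S^o,\mathcal{S})$ is covered by a reduced scheme $T$ of finite type carrying a universal family $S^o\times T\to\mathcal{S}$. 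The connected components of $T$ are finite in number, and any two points of one component are equivalent under $\simeq_{\mathcal{S}}$, which proves finiteness.

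The main obstacle is the non-pseudo-effective case for $K_{S'}+D'$, and more generally making the bound uniform across the varying resolutions $S'$. The first thing to try is to reduce everything to the one-dimensional bound of Corollary \ref{cor_Arakelov_dim1}, applied on a covering family of curves with fixed genus and boundary, combined with the bend-and-break-free boundedness of Hom-schemes of curves of bounded degree.
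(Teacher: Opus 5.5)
\textbf{Comparison with the paper.} The paper's proof is a direct appeal to \cite[Theorem 1.7]{Kovacs2011}. The statement is exactly its conclusion, $\mathcal{S}$ is automatically compatible, and weak boundedness is Corollary \ref{cor_Arakelov_dim1}; here weak boundedness means a bound on $\deg\xi^*\lambda_{a,r}$ for $\mathcal{S}$-admissible families over punctured curves, in terms of the genus and the number of punctures. Your outline uses the same Arakelov input, but you try to re-derive the Kov\'acs--Lieblich theorem by hand, and that re-derivation has gaps.

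\textbf{The obstacle you flag is self-inflicted.} You never need (\ref{align_Ara_1}) or the pseudo-effective dichotomy. Fix one log resolution $(S,D)$ and a very ample $H$ on $S$. For each $\phi$, a general complete intersection $C$ of $n-1$ members of $|H|$ has the following properties:
\begin{itemize}
\item it avoids the codimension $\geq 2$ indeterminacy locus of $S\dashrightarrow M_{\rm slc}$;
\item it has fixed genus and fixed $\#(C\cap D)$, with $\deg(K_C+D_C)>0$ for $H$ positive enough;
\item it carries a birationally admissible family.
\end{itemize}
So Corollary \ref{cor_Arakelov_dim1} bounds $c_1(\xi^*\lambda_{a,r})\cdot H^{n-1}$ uniformly, whichever blow-up $S'$ resolves $\xi$. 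This is all (\ref{align_Ara_1}) would give you anyway.

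\textbf{First gap: from the curve bound to bounded maps.} You speak of ``the Hom-scheme'' of maps $S'\to M_{\rm slc}$, but $S'$ varies with $\phi$. One has to work instead with graph closures in $S\times M_{\rm slc}$. One must then control their full Hilbert polynomial, not just the single mixed intersection number $p^*H^{n-1}\cdot q^*\lambda_{a,r}$. This is supplied by Kov\'acs--Lieblich; compare the use of \cite[Proposition 2.6]{Kovacs2011} in the proof of Theorem \ref{thm_main_Hom_proof}.

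\textbf{Second and more serious gap: lifting from the coarse space to the stack.} Olsson's finiteness theorem concerns $\operatorname{Hom}(\overline{S},\mathcal{M})\to\operatorname{Hom}(\overline{S},M)$ for a \emph{proper} source mapping to the \emph{stack}. A morphism $S^o\to\mathcal{S}$ extends over a compactification only as a map to the coarse space $M_{\rm slc}$. It extends to the stack only after a generically finite base change depending on $\phi$, as in Step 1 of the proof of Theorem \ref{thm_Arakelov_family}. So it is not a point of any such Hom-stack, and Olsson's result does not apply the way you use it.

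Your remark that the lifts ``form a bounded set of torsors under finite groups'' is the whole difficulty, and you give no argument for it. It would have to hold uniformly as the coarse map moves in a finite-type family. It would also have to produce a single finite-type $T$ with a family over $S^o\times T$.

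\textbf{How to close the argument.} This passage, from bounded coarse maps to a finite-type family of stack-valued maps over the non-proper base $S^o$, is precisely the content of \cite[Theorem 1.7]{Kovacs2011} for compatible, weakly bounded stacks. The efficient repair is to verify its two hypotheses, with weak boundedness coming from Corollary \ref{cor_Arakelov_dim1}, and then cite that theorem, as the paper does.
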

    \begin{proof}
    	The proof follows from \cite[Theorem 1.7]{Kovacs2011}: $\mathcal{S}$ is automatically compatible and the weakly boundedness of $\mathcal{S}$ follows from Corollary \ref{cor_Arakelov_dim1}.
    \end{proof}
    \subsection{Boundedness of the Hom stack}
    Let $S$ be a proper algebraic variety and $\mathcal{X}$ a Deligne–Mumford stack of finite type over $\mathbb{C}$. Then the Hom-stack $\operatorname{Hom}(S,\mathcal{X})$ is a Deligne-Mumford stack locally of finite type \cite[Theorem 1.1]{Olsson2006}. Let $Z \subset S$ be a closed algebraic subset and $\mathcal{Z} \subset \mathcal{X}$ a closed algebraic substack. The relative Hom-stack $\operatorname{Hom}((S,Z),(\mathcal{X},\mathcal{Z}))$ is defined as the 2-fiber product  
    $$\xymatrix{
    	Hom((S,Z),(\sX,\sZ)) \ar[r]\ar[d] & Hom(S,\sX) \ar[d]\\
    	Hom(Z,\sZ) \ar[r] & Hom(Z,\sX)
    }.$$ 
    In particular, $\operatorname{Hom}((S,Z),(\mathcal{X},\mathcal{Z}))$ is a closed substack of $\operatorname{Hom}(S,\mathcal{X})$.      
    \begin{thm}\label{thm_main_Hom_proof}  
    	Let $S^o$ be an algebraic variety whose singular locus $S^o_{\mathrm{sing}}$ is a compact algebraic subset. Let $S$ be a compactification of $S^o$, and set $Z := S \setminus S^o$. Let $\mathcal{S} \subset \mathcal{M}_{\mathrm{slc}}(d, \Phi_c, \Gamma, \sigma)$ be a strictly birationally admissible locally closed substack, and denote by $\overline{\mathcal{S}}$ its closure in $\mathcal{M}_{\mathrm{slc}}(d, \Phi_c, \Gamma, \sigma)$. Then the Hom-stack  
    	\[
    	\operatorname{Hom}\big((S,Z),\,(\overline{\mathcal{S}},\,\overline{\mathcal{S}} \setminus \mathcal{S})\big)
    	\]  
    	is of finite type.
    \end{thm}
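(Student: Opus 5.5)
Since $\overline{\mathcal{S}}$ is a closed substack of the proper Deligne--Mumford stack $\mathcal{M}_{\mathrm{slc}}(d,\Phi_c,\Gamma,\sigma)$ and $S$ is proper, $\operatorname{Hom}(S,\overline{\mathcal{S}})$ is a Deligne--Mumford stack locally of finite type by \cite[Theorem 1.1]{Olsson2006}. The relative Hom-stack $\operatorname{Hom}((S,Z),(\overline{\mathcal{S}},\partial\mathcal{S}))$ is a closed substack of it. It is therefore enough to show that its closed points meet only finitely many connected components, and for that it suffices to bound a degree. We take the polarization $\lambda_{a,r}$ of Proposition \ref{prop_ample_line_bundle_moduli}, which is ample on the coarse space $M_{\mathrm{slc}}$, fix an ample line bundle $H$ on $S$, and aim to show that $\xi^*\lambda_{a,r}\cdot H^{\dim S-1}$ is uniformly bounded over all classifying maps $\xi\colon S\to\overline{\mathcal{S}}$ with $\xi^{-1}(\partial\mathcal{S})\subset Z$, i.e.\ with $\xi(S^o)\subset\mathcal{S}$. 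With such a bound, the images of these maps, or equivalently their graphs in $S\times M_{\mathrm{slc}}$, have bounded degree with respect to $H\boxtimes\lambda_{a,r}$. They therefore lie in finitely many components of the Hilbert/Chow scheme, hence in a bounded family of coarse maps. Lifting to the stack is then a finite-type problem, using Olsson's result and the finiteness of the inertia of the DM stack.

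To get the degree bound, first reduce to curves. Take a resolution $\mu\colon S'\to S$ that is an isomorphism over the smooth part of $S^o$ (possible because $S^o_{\mathrm{sing}}$ is compact, so we only modify over a compact subset of $S^o$ and along $Z$), such that $D':=\mu^{-1}(Z)\cup\mu^{-1}(S^o_{\mathrm{sing}})_{\mathrm{red}}$ is snc. Cut $S'$ by general members of $|mH'|$, with $H'$ ample on $S'$, to obtain a smooth complete-intersection curve $C$ meeting $D'$ transversally. Then $C^o:=C\setminus D'$ maps into the smooth part of $S^o$, and $\xi|_{C^o}$ is a family over $C^o$ whose fibers lie in $\mathcal{S}$. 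Since $\mathcal{S}$ is strictly birationally admissible, the base change to $C^o$ is birationally admissible (remark after Definition \ref{defn_st_bir_admissible}). Moreover $\xi\circ\mu|_C$ extends $\xi|_{C^o}$ to the coarse space over all of $C$.

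Corollary \ref{cor_Arakelov_dim1} therefore applies whenever $\deg(K_C+D'|_C)\ge0$, which holds for $m\gg0$. It gives
\[\deg(\xi\circ\mu)^*\lambda_{a,r}|_C\le \tfrac{rd\,l}{2}\deg(K_C+D'\cap C),\]
and the right-hand side depends only on $S',D',m$ and the fixed data $(d,\Phi_c,\Gamma,\sigma,a,r)$, not on $\xi$. By adjunction this equals $(\mu^*\xi^*\lambda_{a,r})\cdot(mH')^{\dim S-1}$, which is therefore uniformly bounded. Pushing forward, $\mu_*((mH')^{\dim S-1})$ is a movable class that dominates a multiple of $H^{\dim S-1}$ up to classes supported on the compact exceptional locus. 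Since $\lambda_{a,r}$ is nef on $S$ (it is ample on $M_{\mathrm{slc}}$), we conclude that $\xi^*\lambda_{a,r}\cdot H^{\dim S-1}$ is bounded.

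The hardest step will be the final passage from boundedness of degrees of the coarse maps $S\to M_{\mathrm{slc}}$ to finite type of the Hom-stack into the stack $\overline{\mathcal{S}}$ with the boundary condition. My first approach is the Kov\'acs--Lieblich method \cite{Kovacs2011}: replace $\overline{\mathcal{S}}$ by a finite flat cover by a scheme (a Kresch--Vistoli type presentation), or use Olsson's theorem that Hom-stacks into tame DM stacks with a bounded coarse image are of finite type. The boundary condition is closed, so it does not affect finite type. A second subtlety is that $\xi$ only factors through the coarse space in Corollary \ref{cor_Arakelov_dim1}, which is permitted there.
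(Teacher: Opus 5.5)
Your argument is essentially the paper's. Corollary \ref{cor_Arakelov_dim1} bounds the coarse classifying maps inside a finite-type piece of $\operatorname{Hom}(S,M_{\mathrm{slc}})$; the paper gets this step by citing \cite[Proposition 2.6]{Kovacs2011}, and you carry it out by hand with complete-intersection curves on a log resolution. Then \cite[Theorem 1.1]{Olsson2007} makes the preimage in the Hom-stack of finite type.

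A few small repairs are still needed:
\begin{enumerate}
\item For $\dim S=1$, taking $m\gg0$ does nothing, so make $K_{S'}+D'$ pseudo-effective by adding auxiliary points to $D'$ instead.
\item To pass from the bound on $\xi^*\lambda_{a,r}\cdot H^{\dim S-1}$ to bounded graph degree, you must also bound the mixed terms $(\xi^*\lambda_{a,r})^i\cdot H^{\dim S-i}$. Khovanskii--Teissier does this, as does finiteness of nef numerical classes of bounded degree.
\item The condition $\xi(S^o)\subset\mathcal{S}$ that your degree bound uses is not a closed condition. The closed condition $\xi(Z)\subset\partial\mathcal{S}$ would also admit maps $S\to\partial\mathcal{S}$, to which the Arakelov bound does not apply. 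So the conclusion should rest on the fact that all such points lie in the finite-type stack $\mathcal{H}$, not on closedness.
\end{enumerate}
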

    \begin{proof}
    	For brevity, denote $\mathcal{M} := \mathcal{M}_{\mathrm{slc}}(d, \Phi_c, \Gamma, \sigma)$ and $M := M_{\mathrm{slc}}(d, \Phi_c, \Gamma, \sigma)$ its coarse moduli space, which is a projective variety. Consider the composition  
    	\[
    	\iota \colon \operatorname{Hom}\big((S,Z),\,(\overline{\mathcal{S}},\,\overline{\mathcal{S}} \setminus \mathcal{S})\big) \to \operatorname{Hom}(S,\mathcal{M}) \to \operatorname{Hom}(S,M).
    	\]  
    	Each $\mathbb{C}$-point $f \in \operatorname{Hom}\big((S,Z),\,(\overline{\mathcal{S}},\,\overline{\mathcal{S}} \setminus \mathcal{S})\big)(\mathbb{C})$ corresponds to a family $f \colon (X,B),A \to S$ that is $\mathcal{S}$-admissible over $S^o$. Hence, by \cite[Proposition 2.6]{Kovacs2011} and Corollary \ref{cor_Arakelov_dim1}, the image of $\iota$ is contained in a finite-type subscheme $\mathbf{H} \subset \operatorname{Hom}(S,M)$. Let $\mathcal{H} \subset \operatorname{Hom}(S,\mathcal{M})$ be the preimage of $\mathbf{H}$, defined via the 2-fiber product  
    	\[
    	\mathcal{H} := \mathbf{H} \times_{\operatorname{Hom}(S,M)} \operatorname{Hom}(S,\mathcal{M}).
    	\]  
    	By \cite[Theorem 1.1]{Olsson2007}, $\mathcal{H}$ is a Deligne-Mumford stack of finite type. Consequently, the inclusion  
    	\[
    	\operatorname{Hom}\big((S,Z),\,(\overline{\mathcal{S}},\,\overline{\mathcal{S}} \setminus \mathcal{S})\big) \hookrightarrow \mathcal{H}
    	\]  
    	implies that the source is itself a stack of finite type.
    \end{proof}
	\bibliographystyle{plain}
	\bibliography{SBC}
\end{document}